\theoremstyle{plain}
\newtheorem{theorem}{Theorem}
\newtheorem{remark}[theorem]{Remark}
\newtheorem{lemma}[theorem]{Lemma}
\newtheorem{proposition}[theorem]{Proposition}
\newtheorem{definition}[theorem]{Definition}
\newcommand{\ii}{\mathrm{i}}
\newcommand{\cb}{{\mathbf U}}
\newcommand{\fibra}{U}
\newcommand{\metrup}[3]{( #1\, |\,#2 )_{#3}}
\newcommand{\distr}{\mathcal{D}}
\newcommand{\norm}[1]{\| #1 \|}
\newcommand{\ff}{{\mathfrak f}}
\title{Quantum confinement for the curvature Laplacian $-\Delta+cK$ on 2D-almost-Riemannian manifolds}
\begin{document}
\author{Ivan Beschastnyi\footnote{CIDMA - Centro de I\&D em Matematica e Aplicações, Departamento de 
Matematica Campus Universitario de Santiago, 3810-193, Aveiro, Portugal (i.beschastnyi@gmail.com)}  \and 
Ugo Boscain\footnote{CNRS, Sorbonne Universit\'{e}, Inria, Universit\'{e} de Paris, Laboratoire Jacques-Louis Lions, Paris, France.
              (ugo.boscain@upmc.fr)  }        \and
         Eugenio~Pozzoli\footnote{Inria, Sorbonne Universit\'e, Universit\'e de Paris, CNRS, Laboratoire Jacques-Louis Lions, Paris, France (eugenio.pozzoli@inria.fr).}}
         
         \maketitle
\begin{abstract}
Two-dimension almost-Riemannian structures of step 2 are natural generalizations of the Grushin plane.
They are generalized Riemannian structures for which the vectors of a local orthonormal frame can become parallel.
Under the 2-step assumption the singular set $Z$, where the structure is not Riemannian, is a 1D embedded 
submanifold. While approaching the singular set, all Riemannian quantities diverge. A remarkable property of 
these structures is that the geodesics can cross the singular set without singularities, but the heat and the solution of the 
Schr\"{o}dinger equation (with the Laplace-Beltrami operator $\Delta$) cannot. This is due to the fact that (under a natural 
compactness hypothesis), the Laplace-Beltrami operator is  essentially self-adjoint on a connected component of the 
manifold without the singular set. In the literature such phenomenon is called quantum confinement.

In this paper we study the self-adjointness of the curvature Laplacian, namely 
$-\Delta+cK$, for $c\in(0,1/2)$ (here $K$ is the Gaussian curvature), which originates 
in coordinate-free quantization procedures (as for instance in path-integral or covariant Weyl quantization). 
We prove that there is no quantum confinement for this type of operators.
\end{abstract}
\textbf{Keywords:} Grushin plane, \and quantum confinement,\and almost-Riemannian manifolds, \and coordinate-free quantization procedures, \and 
self-adjointness of the Laplacian, \and inverse square potential
\section{Introduction}

A $2$-dimensional  almost-Riemannian Structure ($2$-ARS for short) is a generalized Riemannian structure on a $2$-dimensional manifold $M$, that can be defined locally by assigning a pair of smooth vector fields, which play the role of an orthonormal frame. 
It is assumed that the vector fields satisfy the H\"ormander condition (see Section \ref{s-ARS} for a more intrinsic definition). 

$2$-ARSs were introduced in the context of hypoelliptic operators \cite{FL1,grushin1} and are particular case of rank-varying sub-Riemannian structures (see for instance \cite{ABB,bellaiche,jean1,Montgomery-Subriemannian-2002,jean2}). The geometry of $2$-ARSs was studied in \cite{ABS,euler,kresta,lip} while several questions of geometric analysis on such structures were analyzed in \cite{ivan,Boscain-Laurent-2013,seri,Franceschi-Prandi-Rizzi-2017,GMP-Grushin-2018,PozzoliGru-2020volume,Prandi-Rizzi-Seri-2016}.
 For an easy introduction see \cite[Chapter 9]{ABB}.
$2$-ARSs appear also in applications; for instance  in  \cite{q4,q1} for  problems of  population transfer in quantum systems  and in \cite{BCa,tannaka} for  orbital transfer in space mechanics.

Let us denote by $\mathcal{D}_p$  the linear span of the two vector fields at a  point $p$.  Where  $\mathcal{D}_p$ is $2$-dimensional, the corresponding metric is Riemannian. On the singular set $Z$, where $\mathcal{D}_p$ is $1$-dimensional, the corresponding Riemannian metric is not well defined, but thanks to the H\"ormander condition one can still define the Carnot-Carath\'eodory distance between two points, which happens to be finite and continuous. The H\"ormander condition prevents the existence of points where $\mathcal{D}_p$ is zero dimensional. When the set $Z$ is non-empty, we say that the 2-ARS is {\em genuine}.

In most part of  the paper we make the hypothesis that the $2$-ARS is 2-step i.e., that for every $p\in M$ we have dim$(\mathcal{D}_p+[\mathcal{D},\mathcal{D}]_p)=2$. Such a hypothesis guarantees that  the singular set $Z$ is a (closed) $1$-dimensional embedded submanifold and that for every $p\in Z$  we have that $\mathcal{D}_p$ is transversal to $Z$.

One of the main features of $2$-ARSs is the fact that geodesics can pass through the singular set, with no singularities even if all Riemannian quantities (as for instance the metric, the Riemannian area, the curvature) explode while approaching $Z$. 

This is easily illustrated with the example of the Grushin cylinder that is the 2-ARS  on ${\bf R}\times S^1$ defined by the vector fields
$$
X_1(x,y)=\dfrac{\partial}{\partial x}, \qquad X_2(x,y)=x \dfrac{\partial}{\partial y},\mbox{ ~~~ here } x\in{\bf R},~~y\in S^1.
$$ 

\begin{figure}[ht!]\begin{center}
\includegraphics[width=0.4\linewidth, draft = false]{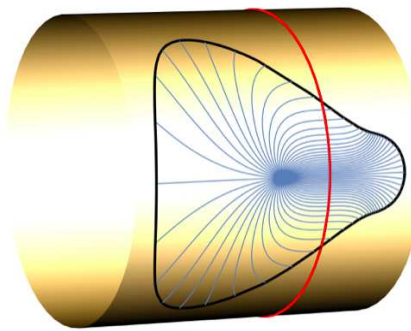}
\caption{Geodesics on the Grushin cylinder, staring from the point $(-1/2,0)$ with final time $t_f=1.3$, crossing smoothly the singular set $Z$ (red circle).} \label{grushin}
\end{center}\end{figure}

For such a structure, the geodesics cross the singular set $Z=\{(x,y)\in{\bf R} \times S^1\mid x=0\}$ without singularities (see Figure \ref{grushin}),
while the Riemannian metric $g$, the Riemannian area $\omega$ and the Gaussian curvature $K$ are deeply singular on $Z$:
\begin{align}
g
=\left(\begin{array}{cc}1&0\\0&1/x^2 \end{array}\right),~~~\omega
=\frac{1}{|x|}dx\,dy,~~~K
=-\frac{2}{x^2}.
\end{align}

However even if geodesics cross the singular set, this is not possible for the Brownian motion or for a quantum particle when they are described by the Laplace-Beltrami operator $\Delta$ associated to the $2$-ARS. This is due to the explosion of the Riemannian area while approaching $Z$ that makes appearing  highly singular first order terms in $\Delta$.   For instance for the Grushin cylinder the Laplace-Beltrami operator is given by
$$
\Delta
={\rm div}_\omega\circ {\rm grad}_g=\dfrac{\partial^2}{\partial x^2}+x^2\dfrac{\partial^2}{\partial y^2}-\frac1x\dfrac{\partial}{\partial x}.
$$

This phenomenon is described by the following Theorem.\footnote{In \cite{Boscain-Laurent-2013} there is the additional hypothesis that $Z$ is en embedded one-dimensional submanifold of $M$. However, as a direct consequence of the implicit function theorem, it is easy to see that such an hypothesis is implied by the fact that the structure is $2$-step.}

\begin{theorem}[\cite{Boscain-Laurent-2013}]\label{prototheorem}
Let $M$ be a compact oriented $2$-dimensional manifold equipped with a genuine 2-step $2$-ARS.  Let $\Omega$ be a connected component of $M\setminus Z$, where $Z$ is the singular set.  Let $g$ be the Riemannian metric induced by the $2$-ARS on $\Omega$ and $\omega$ be the corresponding Riemannian area. The Laplace-Beltrami operator $\Delta:={\rm div}_\omega\circ {\rm grad}_g$ with domain $C^\infty_0(\Omega)$, is essentially self-adjoint on $L^2(\Omega, \omega)$.
\end{theorem}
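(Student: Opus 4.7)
The strategy I would follow is to localize the problem near the singular set $Z$, where all the difficulty lies, and then combine the resulting local essential self-adjointness with a standard argument on the compact interior. Since $M$ is compact, $\overline{\Omega}$ is compact and $\partial\Omega\subset Z$ consists of finitely many smooth curves. Using a partition of unity subordinated to a tubular neighbourhood $U$ of $\partial\Omega$ and its complement $V\Subset\Omega$, the problem reduces to proving essential self-adjointness of $\Delta$ restricted to $C^\infty_0(U\cap\Omega)$: on $V$ the operator is a uniformly elliptic Riemannian Laplacian with coefficients that remain smooth and bounded, so the standard Chernoff/completeness argument applies immediately.

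In $U$, the 2-step hypothesis furnishes normal-form coordinates $(x,y)$ with $Z=\{x=0\}$, $\Omega\cap U=\{x>0\}$, and local orthonormal frame $X_1=\partial_x$, $X_2=f(x,y)\partial_y$ where $f(0,y)=0$ and $\partial_x f(0,y)>0$. The Riemannian area is then $\omega=f^{-1}\,dx\,dy$, and a direct computation gives
\[
\Delta=\partial_x^{\,2}+f^{2}\partial_y^{\,2}-\frac{\partial_x f}{f}\,\partial_x-f\,(\partial_y f)\,\partial_y.
\]
The plan is to kill the singular first-order term by conjugation with the density factor: the map $U_0\colon L^2(U\cap\Omega,\omega)\to L^2(U\cap\Omega,dx\,dy)$ defined by $U_0\psi=f^{-1/2}\psi$ is unitary, and a short calculation shows
\[
-U_0\Delta U_0^{-1}=-\partial_x^{\,2}-f^{2}\partial_y^{\,2}+V_{\mathrm{eff}}(x,y),
\]
where, using $f(x,y)=x\,\partial_x f(0,y)+O(x^2)$, the effective potential satisfies $V_{\mathrm{eff}}(x,y)=\tfrac{3}{4x^{2}}+O(x^{-1})$ as $x\to 0^{+}$. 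Essential self-adjointness is preserved by unitary conjugation, so it suffices to treat the operator on the right-hand side.

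For this transformed operator I would exploit the classical Weyl limit-point criterion in the $x$-variable, viewing $y$ as a parameter. For each fixed $y$, the radial operator $-\partial_x^{\,2}+V_{\mathrm{eff}}(\cdot,y)$ on $C^\infty_0(0,\infty)$ is essentially self-adjoint: the critical coefficient $c=3/4$ is exactly the borderline of the limit-point/limit-circle dichotomy for inverse-square potentials, and the $O(x^{-1})$ correction does not alter this. To upgrade this pointwise (in $y$) statement to the full two-dimensional operator, I would use the Hardy inequality
\[
\int_0^{\infty}|\partial_x\varphi|^{2}\,dx\geq \frac14\int_0^{\infty}\frac{|\varphi|^{2}}{x^{2}}\,dx
\]
applied fibrewise, together with the positivity of $f^{2}\partial_y^{\,2}$ as a quadratic form, to establish that any $u\in L^{2}(dx\,dy)$ with $(-U_0\Delta U_0^{-1}\pm i)u=0$ in the distributional sense must have a quadratic-form trace on $\{x=0\}$ equal to zero. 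A standard integration-by-parts argument against a sequence of cutoffs $\chi_{\varepsilon}(x)$ supported away from $\{x=0\}$ then forces the boundary form to vanish, and hence $u=0$.

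The main obstacle I expect is the genuine two-dimensional nature of the problem: the coefficient $f$ depends on $y$, so one cannot literally separate variables, and the transverse operator $-f^{2}\partial_y^{\,2}$ is itself singular at $Z$ (it degenerates to $0$). Controlling the cross terms produced by the cutoff argument, and verifying that the $O(x^{-1})$ remainder in $V_{\mathrm{eff}}$ does not spoil the borderline Hardy estimate, are the technical points that require care; everything else is either standard elliptic regularity on $\Omega$ or a routine application of von Neumann's deficiency-indices criterion.
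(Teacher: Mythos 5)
First, a point of order: the paper does not prove Theorem \ref{prototheorem} at all --- it is quoted from \cite{Boscain-Laurent-2013}. Your route (normal form F2, the unitary conjugation $\psi\mapsto f^{-1/2}\psi$, the effective potential $\tfrac{3}{4x^2}+O(x^{-1})$, Hardy's inequality) is exactly the route of the cited proof and matches the machinery the present paper sets up in Sections \ref{sec:essop} and \ref{subsec:proof1}. However, as a proof your proposal has two genuine gaps. The first is the localization step: it is \emph{false} that the problem ``reduces to proving essential self-adjointness of $\Delta$ restricted to $C^\infty_0(U\cap\Omega)$.'' That restricted operator is never essentially self-adjoint, because $U\cap\Omega$ has an artificial outer boundary (the edge of the tubular neighbourhood) at which the metric is perfectly regular and no confinement mechanism is available; likewise $\Delta$ on $C^\infty_0(V)$ for $V\Subset\Omega$ is not essentially self-adjoint (think of the Dirichlet/Neumann ambiguity on a disk). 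Essential self-adjointness does not glue by partition of unity the way you suggest. The correct reduction is an IMS/Agmon-type localization of a putative deficiency element $u\in\ker(\Delta^*\mp i)$: one multiplies $u$ by cutoffs and must control the commutator terms $[\Delta,\chi]u$, which requires an a priori weighted estimate on $u$ near $Z$ --- this is not a formality and is where the cited proof does real work.

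The second gap is at the heart of the matter: the two-dimensional borderline estimate. You correctly identify that the effective potential has leading coefficient exactly $3/4$, the threshold of the limit-point/limit-circle dichotomy, and you correctly note that the $O(x^{-1})$ correction does not change the indicial roots (this is the content of the Frobenius analysis in Lemma \ref{modeloperator}). But your plan to conclude via ``a standard integration-by-parts argument against a sequence of cutoffs $\chi_\varepsilon$'' is precisely the step that is \emph{not} standard at the critical constant: Hardy's inequality gives $\int|\partial_x\varphi|^2\geq\tfrac14\int x^{-2}|\varphi|^2$ with no room to spare, so the $O(\varepsilon^{-2})$ terms produced by $\chi_\varepsilon'$ and $\chi_\varepsilon''$ cannot be absorbed naively; one needs the refined Agmon estimate (as in Nenciu--Nenciu or \cite{Prandi-Rizzi-Seri-2016}) showing that any $u$ with $\Delta^*u\in L^2$ satisfies a quantitative decay such as $\int x^{-2}(1+|\log x|)^{-2}|u|^2<\infty$ before the cutoff argument closes. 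Also note Proposition \ref{complete0} requires $V\geq\tfrac{3}{4x^2}$ pointwise, which the $O(x^{-1})$ term (of uncontrolled sign) may violate, so even the one-dimensional fibrewise claim needs the perturbative argument of Lemma \ref{modeloperator} rather than a direct appeal to that criterion. In short: the skeleton is the right one, but the two steps you label ``standard'' are exactly the ones that constitute the proof. (A minor computational slip: the first-order $y$-term in $\Delta$ is $+f(\partial_yf)\partial_y$, cf.\ \eqref{e-ugo-D}, not $-f(\partial_yf)\partial_y$; it is harmless since it is removed by the conjugation anyway.)
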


Notice that by construction each connected component of $\partial\Omega$ is diffeomorphic to $S^1$, $\Omega$ is open and $(\Omega,g)$ is a non-complete Riemannian manifold. In Theorem \ref{prototheorem} the compactness hypothesis is not necessary, but simplifies the statement.
In particular the conclusion of the theorem holds for the Grushin cylinder. Versions of Theorem \ref{prototheorem} in more general settings have been proved in \cite{Franceschi-Prandi-Rizzi-2017,Prandi-Rizzi-Seri-2016}.

The main consequence of Theorem \ref{prototheorem} is that the Cauchy problems  for the heat and the Schr\"{o}dinger equations (here $\Delta$ is the Laplace-Beltrami operator, $\hbar$ is the Planck constant, and $m$ is the mass of the quantum particle)\footnote{For the heat equation all constant are normalized to 1. This has not been done for the Schr\"odinger equation since the role of the Planck constant $\hbar$ is important for further discussions.} 
\begin{eqnarray}
\partial_t\phi(t,p)&=&\Delta\,\phi(t,p),~~~~~~~~~~~~~~\phi(0,\cdot)=\phi_0\in L^2(\Omega,\omega),\\
\ii\hbar\,\partial_t\psi(t,p)&=&-\frac{\hbar^2}{2m}\Delta\,\psi(t,p),~~~~~~~\psi(0,\cdot)=\psi_0\in L^2(\Omega,\omega).
\end{eqnarray}
are well defined in $L^2(\Omega, \omega)$ and hence nothing can flow outside $\Omega$, that is, 
$e^{t\Delta}\phi_0$ (resp. $e^{\ii t\frac{\hbar}{2m}\Delta}\psi_0$)  is supported in $\Omega$, for all $t\geq0$ (resp. $t\in {\bf R}$).
This phenomenon is usually known as {\em quantum confinement} (see \cite{Franceschi-Prandi-Rizzi-2017,Prandi-Rizzi-Seri-2016} and, for similar problems, \cite{nenciu}).

Given that the geodesics cross the singular set with no singularities, the impossibility for the heat or for a quantum particle to flow through $Z$ implied by Theorem  \ref{prototheorem} is quite surprising.
For what concerns the heat, a satisfactory interpretation of Theorem \ref{prototheorem} in terms of Brownian motion/Bessel processes has been provided for the Grushin cylinder in \cite{boscain-neel} and from \cite{volume-sampling} one can extract an interpretation of Theorem \ref{prototheorem} in terms of  random walks. Roughly speaking random particles are lost in the infinite area accumulated along $Z$ that, as a consequence, acts as a barrier.

Although for the heat-equation the situation is relatively well-understood, 
this is not the case for the Schr\"{o}dinger equation since semiclassical analysis (see for instance \cite{zworski})
roughly says that for $\hbar\to0$  sufficiently concentrated solutions of the Schr\"{o}dinger equation move approximately along classical geodesics. Clearly semiclassical analysis breaks down on the singularity $Z$.

It is then natural to come back on the quantization procedure that permits to pass from the description of a free classical particle moving on a Riemannian manifold  to the corresponding Schr\"{o}dinger equation. 

This is a complicated subject that has no unique answer. The resulting evolution equation for quantum particles depends indeed on the chosen quantization procedure.

Most of coordinate invariant quantization procedures modify the quantum Hamiltonian by a correction term depending on the scalar curvature $R$. In dimension two, the scalar curvature is twice the Gaussian curvature $K$ and the modified Schr\"{o}dinger equation is of the form
$$
\ii\hbar\,\partial_t\psi(t,p)=\frac{\hbar^2}{2m}\Big(-\Delta+  c K(p)\Big)\psi(t,p),
$$
where $c\geq0$ is a constant.  Values given in the literature include:
\begin{itemize}
\item path integral quantization: $c=1/3$ and  $c=2/3$ in  \cite{driver-22}, $c=1/2$ in \cite{driver-20};
\item covariant Weyl quantization: $c\in[0,2/3]$ including conventional Weyl quantization ($c=0$) in \cite{fulling};
\item geometric quantization for a real polarization: $c=1/3$ in  \cite{driver-97};
\item finite dimensional approximations to Wiener Measures: $c=2/3$ in \cite{driver}\footnote{Notice that the Schr\"odinger equation that one finds in the references \cite{driver,driver-20,driver-22,fulling,driver-97} is $\ii\hbar\,\partial_t\psi(t,p)=\frac{\hbar^2}{m}\Big(-\frac{1}{2}\Delta+  c' R(p)\Big)\psi(t,p)$. Hence, $c=4c'$.}.
\end{itemize}

We refer to \cite{driver,fulling} for interesting discussions on the subject.\footnote{There are also other approaches to the quantization process on Riemannian manifolds that provide correction terms depending on the curvature. 
For instance  if one consider the Laplacian on a $\epsilon$-tubular neighborhood of a surface in ${\bf R}^3$ with Dirichlet boundary conditions, then for $\epsilon\to0$ after a suitable renormalization, one gets an operator containing a correction term depending on the Gaussian curvature and the square of the mean curvature (see \cite{David2,Lampart}).} 

Purpose of this paper is to study the self-adjointness of the {\em curvature Laplacian} $-\Delta+  c K$ depending on $c$ to understand if quantum confinement holds for the dynamics induced by this operator.
 Before stating our main result, let us remark that the curvature term $cK$ interacts with the diverging first order term in $\Delta$. 

For instance for the Grushin cylinder a unitary transformation (see Section \ref{subsec:proof1}, \eqref{U}) permits to transform the operator 

$$
\Delta=\dfrac{\partial^2}{\partial x^2}+x^2\dfrac{\partial^2}{\partial y^2}-\frac1x\dfrac{\partial}{\partial x}\mbox{ on }L^2\left({\bf R}\times S^1,\frac{1}{|x|}dx\,dy\right)
$$ in 
$$
\tilde\Delta=\dfrac{\partial^2}{\partial x^2}+x^2\dfrac{\partial^2}{\partial y^2}-\frac34\dfrac{1}{x^2}\mbox{ on } L^2({\bf R}\times S^1,dx\,dy)
$$
and hence the adding of a term of the form $-cK=-c\big(-2\frac1{x^2} \big)$ (that remains untouched by the unitary transformation) to $\tilde\Delta$  changes the diverging behaviour around ${x=0}$. In particular for $c=3/8$ the diverging potential disappears and $-\tilde\Delta+cK$ is not essentially self-adjoint in $L^2({\bf R}_+\times S^1,dx\,dy)$ while $-\tilde\Delta$ does. The same conclusion applies to $-\Delta+cK$ in $L^2({\bf R}_+\times S^1,\frac{1}{|x|}dx\,dy)$. \\

The main result of the paper is that the perturbation term given by the curvature destroys the essential self-adjointness of the Laplace-Beltrami operator.

\begin{theorem}\label{theorem}
Let $M$ be a compact oriented $2$-dimensional manifold equipped with a genuine 2-step $2$-ARS.  Let $\Omega$ be a connected component of $M\setminus Z$, where $Z$ is the singular set.  Let $g$ be the Riemannian metric induced by the $2$-ARS on $\Omega$, $\omega$ be the corresponding Riemannian area, $K$ the corresponding Gaussian curvature and $\Delta={\rm div}_\omega\circ {\rm grad}_g$ the Laplace-Beltrami operator.   Let $c \in [0,1/2)$. The curvature Laplacian  $-\Delta+cK$ with domain $C^\infty_0(\Omega)$, is essentially self-adjoint on $L^2(\Omega, \omega)$ if and only if c=0.
Moreover, if $c>0$, the curvature Laplacian has infinite deficiency indices.
\end{theorem}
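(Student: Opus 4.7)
The plan is to reduce essential self-adjointness to a one-dimensional Weyl limit point/limit circle analysis near $Z$ for a Schrödinger operator whose singular inverse-square potential has a coefficient depending linearly on $c$. By compactness and the 2-step assumption, each connected component of $Z$ is a circle admitting a tubular neighborhood diffeomorphic to $S^1\times(-\epsilon,\epsilon)$. Using the standard local normal form for 2-step 2-ARSs, I choose coordinates $(x,y)\in(-\epsilon,\epsilon)\times S^1$ in which a local orthonormal frame reads $\{\partial_x,\,x e^{\phi(x,y)}\partial_y\}$ for some smooth $\phi$; then $\omega=(e^{-\phi}/|x|)\,dx\,dy$, and a direct computation from $g=dx^2+e^{-2\phi}/x^2\,dy^2$ yields the universal asymptotics $K(x,y)=-2/x^2+O(1/|x|)$ as $x\to 0$.

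Next, conjugation by the isometry $U:L^2(\omega)\to L^2(dx\,dy)$, $U\psi=\sqrt{e^{-\phi}/|x|}\,\psi$, removes the divergent first-order part of $\Delta$. Extending the Grushin computation of the excerpt (to which it reduces for $\phi\equiv 0$) yields
$$\tilde L_c := U(-\Delta+cK)U^{-1} = -\partial_x^2 - e^{2\phi}x^2\partial_y^2 + \frac{3/4-2c}{x^2}+V_{\mathrm{reg}}(x,y),$$
with $V_{\mathrm{reg}}=O(1/|x|)$ uniformly in $y$. The crucial point is that the $-2c/x^2$ from the universal curvature singularity combines with the universal $+3/(4x^2)$ from the gauge to give the coefficient $\alpha:=3/4-2c$ of the leading inverse-square potential, which is the sole source of the essential self-adjointness dichotomy.

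The case $c=0$ is Theorem \ref{prototheorem}. For $c\in(0,1/2)$ we have $\alpha\in(-1/4,3/4)$; the indicial roots of $-u''+(\alpha/x^2)u=0$ are $s_\pm=1/2\pm\sqrt{1-2c}\in(-1/2,3/2)$, so both $x^{s_\pm}$ are $L^2$ near $0$ (the lower root exits $L^2$ exactly at $c=0$, while semi-boundedness is lost exactly at $c=1/2$, explaining the stated range). Hence the one-dimensional radial model is in the limit circle case at $x=0$. After the preliminary change $y\mapsto\int_0^y e^{-\phi(0,s)}ds$ that standardizes the frozen tangential metric, a Fourier decomposition $\{e^{\ii n y}\}_{n\in\mathbb{Z}}$ diagonalizes the principal part of $\tilde L_c$ as a direct sum of 1D operators $h_n=-\partial_x^2+n^2 x^2+\alpha/x^2$, each limit circle at $0$; summing over $n$ gives infinite total deficiency at $Z$.

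The principal technical difficulty is the passage from this diagonal frozen model to the full $\tilde L_c$, since the $y$-dependence of $\phi$ couples Fourier modes and $V_{\mathrm{reg}}$ is a singular (though subleading) perturbation. I would handle this either (a) by explicitly building infinitely many linearly independent deficiency vectors of the form $\chi(x)\,x^{1/2-\sqrt{1-2c}}v_n(y)$ with $\chi$ a cutoff at $x=0$ and $v_n$ localized on disjoint shrinking $y$-intervals so that the cross-mode coupling becomes negligible as $n\to\infty$, or (b) by a relative-form-bound argument showing that $V_{\mathrm{reg}}$ and the off-diagonal part of $e^{2\phi}x^2\partial_y^2$ are infinitesimally dominated by the diagonal $\alpha/x^2$; stability of deficiency indices under such perturbations, together with reality of $-\Delta+cK$, then yields deficiency indices $(\infty,\infty)$.
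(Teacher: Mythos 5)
Your overall strategy is the one the paper follows: tubular neighborhood and normal form $\{\partial_x, xe^{\phi}\partial_y\}$, the unitary gauge transformation producing the inverse-square coefficient $3/4-2c$, the observation that for $c\in(0,1/2)$ both indicial roots $\alpha_\pm=\frac12\pm\sqrt{1-2c}$ give square-integrable powers near $x=0$, a Fourier decomposition in $y$, and multiplication by $e^{\ii k y}$ to get infinite deficiency indices. However, there is a concrete gap in your construction of the deficiency vectors. Your candidate $\chi(x)\,x^{1/2-\sqrt{1-2c}}v_n(y)$ is in general \emph{not} in the domain of the adjoint: after the gauge transformation the operator contains the subleading singular term $\frac{1-4c}{2}\,\frac{\partial_x\phi(x,y)}{x}$, and applying the 1D model operator $A=-\frac{d^2}{dx^2}+\frac{g_2}{x^2}+\frac{g_1(y)}{x}$ to the pure power $x^{\alpha_-}$ kills the $x^{\alpha_--2}$ terms (since $\alpha_-$ is an indicial root) but leaves $g_1(y)\,x^{\alpha_--1}$, which fails to be in $L^2$ near $0$ because $\alpha_-<1/2$ for every $c>0$. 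Whenever $\partial_x\phi(0,y)\not\equiv0$ your vectors therefore leave $L^2$ under the operator. The paper repairs exactly this by taking the truncated Frobenius solutions $\psi_\pm(x)=x^{\alpha_\pm}+a_\pm(y)x^{\alpha_\pm+1}$ (with a logarithmic branch at $c=3/8$, where the roots differ by an integer), the correction term being chosen so that $A\psi_\pm$ is bounded near $x=0$; this is Lemma \ref{modeloperator} and the lemma inside the proof of Theorem \ref{Grushin}.

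A second point your sketch leaves essentially unaddressed is why these vectors are \emph{not} in the closure domain of the full 2D operator: limit-circle behaviour of the frozen radial model does not by itself transfer to the coupled operator. The paper proves a quantitative characterization — every $f$ in the closure domain supported near $x=0$ has Fourier modes $\widehat f_k(x)=o(x^{3/2})$ — by combining a Hardy-inequality/Kato--Rellich argument (Lemma \ref{small}, which is where the hypothesis $c<1/2$ enters through non-negativity of $T_c$) with the identification $\mathcal{D}(\overline{s_c})=H^2_0({\bf R}_+)$ from \cite{Ananieva}; since $\alpha_-<3/2$ the $0$-th Fourier mode of $\psi_- P_\epsilon$ violates this, which is what excludes it from the closure. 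Your alternative route (b), via stability of deficiency indices under relatively bounded symmetric perturbations, is plausible for the $g_1/x$ term but is not developed enough to replace these two lemmas, and the claim that the off-diagonal part of $e^{2\phi}x^2\partial_y^2$ is ``infinitesimally dominated by $\alpha/x^2$'' is not correct as stated (it is controlled instead by restricting supports to $x<\epsilon$ and working mode by mode). Finally, the gluing from the local Grushin zone to all of $\Omega$ (the cut-off argument of Section \ref{sec:proofonmanifold}) is routine but should be stated.
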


The non-self-adjointness of $-\Delta+cK$  implies that one can construct self-adjoint extensions of this operator that permit to the solution to the Schr\"{o}dinger equation to flow out of the set $\Omega$, in the same spirit of \cite{Boscain-Prandi-JDE-2016,Gallone-Michelangeli-Pozzoli-2020}.
The study of these self-adjoint extension and how semiclassical analysis applies to them is a subject that deserves to be studied in detail.

\begin{remark}
We remark that in Theorem \ref{theorem} the hypothesis $c\in[0,1/2)$ guarantees the non-negativity of the operator $-\Delta+cK$ modulo a Kato-small perturbation. For further details, see also Remark \ref{rmk:conjecture}, Lemma \ref{small}, and Remark \ref{rmk:important}
\end{remark}

\begin{remark} Notice that in Theorem \ref{theorem} one can also consider the case $c< 0$. In this case, one can prove that the curvature Laplacian is essentially self-adjoint (applying for example the criterion for the self-adjointes of operators of the form $-\Delta+V$ on non-complete Riemannian manifolds, found in \cite{Prandi-Rizzi-Seri-2016}). However, if this case admits a physical interpretation is not known to the authors.
\end{remark}

As in Theorem \ref{prototheorem} the compactness hypothesis is useful to simplify the statement of the theorem. A version without the compactness hypothesis is given here where also the orientability assumption of $M$ is not needed.

\begin{theorem}
\label{posttheorem}
Let $M$ be a $2$-dimensional manifold equipped with a genuine 2-step $2$-ARS.  

Assume that 
\begin{itemize}
\item the singular set $Z$ is compact;
\item the $2$-ARS is geodesically complete. 
\end{itemize}
Let $\Omega$ be a connected component of $M\setminus Z$, and $c\in [0,1/2)$. With the same notations of Theorem \ref{theorem}, the curvature Laplacian  $-\Delta+cK$ with domain $C^\infty_0(\Omega)$, is essentially self-adjoint on $L^2(\Omega, \omega)$ if and only if c=0. Moreover, if $c>0$, the curvature Laplacian has infinite deficiency indices.
\end{theorem}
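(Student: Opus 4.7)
The plan is to reduce Theorem \ref{posttheorem} to the already-handled compact case (Theorem \ref{theorem}) by localising the analysis of the deficiency indices to the two ``ends'' of $(\Omega,g)$: the incomplete end at the compact singular set $Z$, and the possibly non-compact Riemannian end at infinity. The geodesic completeness of the 2-ARS is precisely what is needed to kill the second end, while the behaviour at $Z$ reproduces Theorem \ref{theorem} verbatim because $Z$ is compact and the normal form of a 2-step 2-ARS near $Z$ is intrinsic.

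First I would fix a relatively compact tubular neighbourhood $U$ of $Z$ in $M$ (which exists since $Z$ is a compact embedded 1-submanifold), and a slightly smaller one $U' \Subset U$. Set $\Omega_Z := \Omega \cap U$ and $\Omega_\infty := \Omega \setminus \overline{\Omega \cap U'}$, so that $\Omega = \Omega_Z \cup \Omega_\infty$ with overlap. Using an IMS-type partition of unity $\chi_Z^2 + \chi_\infty^2 = 1$ subordinated to this cover, one decomposes any candidate deficiency element $u \in \ker\bigl((-\Delta+cK)^* \mp i\bigr)$ as $u = \chi_Z u + \chi_\infty u$. Since $u$ is smooth by elliptic regularity and both $\chi_Z u$ and $\chi_\infty u$ are compactly localised toward their respective ends, counting deficiency indices of the full operator reduces, modulo a finite-dimensional error coming from the overlap (which is relatively compact in $\Omega$), to counting deficiency indices at each end separately.

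At the singular end, $\Omega_Z$ is a finite collar of the compact manifold $Z$, so up to smooth perturbations the structure is the same as in the compact case of Theorem \ref{theorem}: after the unitary transformation mentioned in the introduction, the operator takes the form $\partial_{xx} + x^2 \partial_{yy} + V_c(x,y)$ with $V_c(x,y) \sim -\frac{3}{4x^2} + 2c \cdot \frac{1}{x^2}$ near $\{x=0\}$, and each Fourier mode in the $Z$-variable produces an inverse-square one-dimensional problem whose Weyl limit-point/limit-circle classification gives essential self-adjointness iff the coefficient $-\frac{3}{4}+2c \le -\frac{3}{4}$, i.e.\ iff $c=0$; otherwise one gets a limit-circle contribution from each of the infinitely many Fourier modes and hence infinite deficiency indices. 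This part is essentially a quotation from (the proof of) Theorem \ref{theorem}. At the regular end $\Omega_\infty$, geodesic completeness of the 2-ARS together with compactness of $Z$ implies that $(\Omega_\infty, g|_{\Omega_\infty})$ is complete away from its boundary facing $Z$ (Cauchy sequences that do not approach $Z$ subconverge), so a standard criterion—e.g.\ the Prandi-Rizzi-Seri criterion \cite{Prandi-Rizzi-Seri-2016} or Chernoff's theorem applied to $-\Delta + cK$ which is bounded below modulo a Kato-small perturbation thanks to $c \in [0,1/2)$—gives essential self-adjointness of $-\Delta + cK$ on $C_0^\infty(\Omega_\infty)$.

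The main obstacle is to carry out the localisation of deficiency indices cleanly. Essential self-adjointness does not, in general, respect partitions of unity, so one cannot simply ``add'' the two analyses. The technical heart of the argument will be an IMS-type identity combined with the observation that, since the overlap region $\overline{U}\setminus U'$ is relatively compact in $\Omega$, the cross-term commutators $[\Delta,\chi_{Z/\infty}]$ are bounded operators, so the deficiency index of the full operator differs from the sum of the deficiency indices of the two localised operators by a bounded (hence finite-dimensional at fixed energy) contribution; in particular infinite deficiency at the singular end propagates to infinite deficiency of the full operator, and triviality at the regular end ensures no extra deficiency sneaks in from infinity. This glueing step is where one must be most careful, and it is also the only place where the non-compactness hypothesis on $M$ (replaced here by geodesic completeness) genuinely enters the argument.
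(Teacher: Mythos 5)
First, a point of reference: the paper does not write out a separate proof of Theorem \ref{posttheorem}; it states that the theorem ``can be proved following the same ideas'' as Theorem \ref{theorem}, whose proof (Section \ref{sec:proofonmanifold}) constructs, for $c>0$, explicit functions $F^k_{\epsilon,c}\in\mathcal{D}((-\Delta+cK)^*)\setminus\mathcal{D}(\overline{-\Delta+cK})$ supported in an $\epsilon$-collar of a connected component of $Z$, obtained by extending by zero the functions $h_{+,\epsilon,c}$ of Theorem \ref{Grushin}. Your overall two-end picture (singular end at $Z$, Riemannian end at infinity controlled by geodesic completeness, the latter relevant only for the $c=0$ direction) is the right one and is consistent with the paper's intent. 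However, your proposed mechanism for gluing has a genuine gap. The commutator $[\Delta,\chi]=(\Delta\chi)+2\nabla\chi\cdot\nabla$ is a first-order differential operator and is \emph{not} a bounded operator on $L^2$; it is only $\Delta$-bounded (with relative bound zero, via elliptic/Sobolev estimates on the compact overlap region -- this is exactly how the paper's Lemma \ref{contradiction} handles the cut-off). Moreover, even granting control of the commutators, the assertion that the deficiency index of the full operator differs from the sum of the indices of the two localised operators ``by a bounded (hence finite-dimensional) contribution'' is not a citable principle for multidimensional Schr\"odinger operators and is not proved in your sketch; this decomposition of deficiency indices is precisely the step you would have to establish, and you do not. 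The paper sidesteps it entirely: to show failure of essential self-adjointness one only needs to exhibit elements of $\mathcal{D}(A^*)\setminus\mathcal{D}(\overline{A})$, and elements localised near $Z$ extend by zero to the whole of $\Omega$; the only delicate point is that the zero-extension stays outside $\mathcal{D}(\overline{A})$, which is handled by the cut-off argument of Lemma \ref{contradiction}. No information about the end at infinity is needed for $c>0$; geodesic completeness enters only to get essential self-adjointness when $c=0$ (via the non-compact versions of Theorem \ref{prototheorem} in \cite{Franceschi-Prandi-Rizzi-2017,Prandi-Rizzi-Seri-2016}).

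A second, smaller gap: your treatment of the singular end asserts that ``each Fourier mode in the $Z$-variable produces an inverse-square one-dimensional problem'' settled by Weyl's limit-point/limit-circle alternative. This decoupling is only valid for the exact Grushin metric ($\phi\equiv 0$); in the general normal form F2 the coefficient $\phi(x,y)$ depends on $y$, the operator does not commute with the Fourier decomposition, and the extra term $\tfrac{1-4c}{2}\tfrac{\partial_x\phi}{x}$ produces a Coulomb-type singularity absent from the pure inverse-square model. Controlling these couplings is the actual content of Lemmas \ref{small}, \ref{closure} and \ref{modeloperator} (Hardy inequality, Kato--Rellich, Frobenius analysis of $-\tfrac{d^2}{dx^2}+\tfrac{g_2}{x^2}+\tfrac{g_1}{x}$). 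Deferring to ``a quotation from the proof of Theorem \ref{theorem}'' is acceptable for this part, but as written your sketch understates where the work lies and overstates what the Fourier reduction gives you.
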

For the sake of simplicity, we prove Theorem \ref{theorem} only. Theorem \ref{posttheorem} can be proved following the same ideas.

Theorem \ref{posttheorem} applies in particular to the Grushin cylinder with curvature Laplacian $-\Delta+cK=-(\partial^2_x+x^2\partial^2_y-\frac1x\partial_x)+\frac{2c}{x^2}$. For this case, the fact that the deficiency indices are infinite means that all Fourier components of $-\Delta+cK$ are not self-adjoint.

Notice that under the hypothesis of the theorem, each connected component of $\partial\Omega$ is diffeomorfic to $S^1.$ 
Of course if $c>0$, the manifold does not need to be geodesically complete.

If one removes the 2-step hypothesis the situation is more complicated since tangency points \cite{ABS,euler} may appear.  In presence of tangency points even the essential self-adjointness of the standard Laplace-Beltrami operator (without the term $-cK$) is an open question \cite{Boscain-Laurent-2013}. Without the $2$-step hypothesis results can indeed be very different.  
To illustrate this, we study  the $\alpha$-Grushin cylinder for which computations can be done explicitly for every value of $c$.

\begin{proposition}\label{alpha-cyl}
Fix $\alpha\in{\bf R}$. On ${\bf R}\times S^1$ consider the generalized Riemannian structure for which an orthonormal frame is given by
$$
X_1(x,y)=\dfrac{\partial}{\partial x}, \qquad X_2^{(\alpha)}(x,y)=x^\alpha \dfrac{\partial}{\partial y},\mbox{ ~~~ here } x\in{\bf R},~~y\in S^1.
$$ 
Let $c\geq0$. On ${\bf R}_+\times S^1$ the structure is Riemannian with Riemannian area $\frac{1}{|x|^\alpha}dx\,dy$. Let  $-\Delta_\alpha+cK_\alpha$ be the curvature Laplacian with domain $C^\infty_0({\bf R}_+\times S^1)$ acting on $L^2({\bf R}_+\times S^1, \frac{1}{|x|^\alpha}dx\,dy)$. Denote by 
$$\alpha_{c,\pm}=\dfrac{(-2c+1)\pm 2\sqrt{(c-2+\sqrt{3})(c-2-\sqrt{3})}}{4c-1}.$$
\begin{align*}
\bullet &\text{ If } 0\leq c <  1/4,  &-\Delta_\alpha+cK_\alpha& \text{ is essentially self-adjoint if and only if } \alpha\geq \alpha_{c,+} \text{ or }\alpha\leq \alpha_{c,-};\\
\bullet &\text{ if } c=1/4, &-\Delta_\alpha+cK_\alpha& \text{ is essentially self-adjoint if and only if } \alpha\geq 3;\\
\bullet &\text{ if } 1/4<c\leq 2-\sqrt{3},  &-\Delta_\alpha+cK_\alpha& \text{ is essentially self-adjoint if and only if } \alpha_{c,-}\leq\alpha\leq \alpha_{c,+}; \\
\bullet &\text{ if }  2-\sqrt{3}<c< 2+\sqrt{3}, &-\Delta_\alpha+cK_\alpha& \text{ is not essentially self-adjoint } \forall \alpha \in {\bf R};\\
\bullet &\text{ if }  c\geq 2+\sqrt{3}, &-\Delta_\alpha+cK_\alpha& \text{ is essentially self-adjoint if and only if } \alpha_{c,-}\leq\alpha\leq \alpha_{c,+}.
\end{align*}
\end{proposition}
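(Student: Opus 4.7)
The plan is to exploit the translation invariance in $y$ to Fourier-decompose the problem into a family of one-dimensional Schr\"odinger operators on $\mathbb{R}_+$, and then apply the classical Weyl limit-point/limit-circle analysis at the two endpoints $0$ and $+\infty$.

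First, I would compute the relevant geometric objects on $\mathbb{R}_+ \times S^1$: the metric $g = dx^2 + x^{-2\alpha}dy^2$, the area form $\omega = x^{-\alpha}\,dx\,dy$, the Gaussian curvature $K_\alpha = -\alpha(\alpha+1)/x^2$ (from the warped-product formula $K = -f''/f$ with $f(x) = x^{-\alpha}$), and the Laplace-Beltrami operator $\Delta_\alpha = \partial_x^2 + x^{2\alpha}\partial_y^2 - (\alpha/x)\partial_x$. Conjugation by the unitary map $U : L^2(\omega) \to L^2(dx\,dy)$, $Uf = x^{-\alpha/2} f$, removes the first-order term and produces
$$U(-\Delta_\alpha + cK_\alpha) U^{-1} = -\partial_x^2 - x^{2\alpha}\partial_y^2 + \frac{A(\alpha,c)}{x^2}, \qquad A(\alpha,c) = \frac{\alpha(\alpha+2)}{4} - c\,\alpha(\alpha+1).$$
Decomposing $L^2(\mathbb{R}_+ \times S^1, dx\,dy) = \bigoplus_{k \in \mathbb{Z}} L^2(\mathbb{R}_+, dx)$ into Fourier modes in $y$, this operator becomes the direct sum of the one-dimensional Schr\"odinger operators
$$L_k = -\partial_x^2 + k^2 x^{2\alpha} + \frac{A(\alpha,c)}{x^2}$$
on $C_0^\infty(\mathbb{R}_+)$, and $-\Delta_\alpha + cK_\alpha$ is essentially self-adjoint if and only if every $L_k$ is.

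Next, I would apply Weyl's limit-point/limit-circle criterion at each endpoint of each $L_k$. At $x = +\infty$ the potential is either bounded (for $\alpha \leq 0$) or tends to $+\infty$ (for $\alpha > 0$), so we are always in the limit-point case there. At $x = 0$ the extra term $k^2 x^{2\alpha} \geq 0$ can only enhance the repulsion, so the binding mode is $k = 0$; for the pure inverse-square operator $L_0 = -\partial_x^2 + A(\alpha,c)/x^2$, the indicial equation $\mu(\mu-1) = A(\alpha,c)$ yields the classical Kalf-Walter-Schmincke threshold: $L_0$ is limit-point at $0$ if and only if $A(\alpha,c) \geq 3/4$. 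Thus essential self-adjointness of $-\Delta_\alpha + cK_\alpha$ is equivalent to
$$(1-4c)\alpha^2 + (2-4c)\alpha - 3 \geq 0.$$

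The five cases in the statement then follow from a direct sign analysis of this quadratic in $\alpha$. The leading coefficient $(1-4c)$ switches sign at $c = 1/4$, and the discriminant $16(c^2 - 4c + 1) = 16(c - 2 + \sqrt{3})(c - 2 - \sqrt{3})$ switches sign at $c = 2 \pm \sqrt{3}$; the two roots of the quadratic are precisely $\alpha_{c,\pm}$ by construction. Tracking the geometry of the parabola (opens up vs.\ down, real vs.\ complex roots, degenerate linear case at $c=1/4$) on the five intervals of $c$ reproduces the five conditions on $\alpha$ listed in the proposition. The step I expect to require the most care is the reduction to the mode $k=0$ when $\alpha < -1$: there the term $k^2 x^{2\alpha}$ dominates $A(\alpha,c)/x^2$ at $0$ for every $k \neq 0$, so one must invoke a strongly repulsive-potential argument to conclude that those modes are automatically limit-point and that the binding condition still comes from $L_0$.
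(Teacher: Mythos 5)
Your proposal is correct and follows essentially the same route as the paper's proof: the unitary conjugation $\psi\mapsto |x|^{-\alpha/2}\psi$, Fourier decomposition in $y$, reduction to the inverse-square potential $k(\alpha,c)/x^2$ with the limit point/limit circle threshold $k(\alpha,c)\geq 3/4$ at the origin (and limit point at infinity), and the resulting sign analysis of the quadratic $(1-4c)\alpha^2+(2-4c)\alpha-3$. The only minor difference is that the step you flag as delicate (the modes $k\neq 0$ when $\alpha<-1$) needs no special argument: since $k^2x^{2\alpha}\geq 0$, each $k\neq 0$ potential dominates the $k=0$ one near $x=0$, so it is limit point there whenever the $k=0$ mode is, and if the $k=0$ mode fails to be essentially self-adjoint the direct sum already fails regardless of the other modes.
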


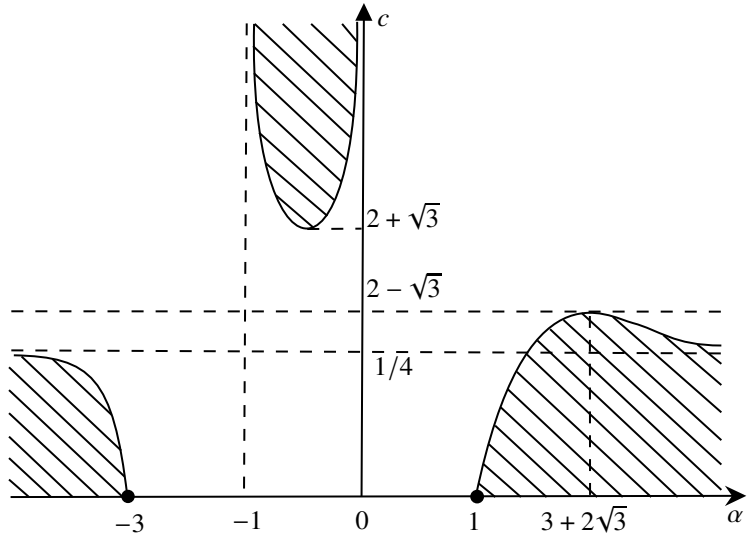
\begin{figure}[h]

\begin{center}

\tikzset{every picture/.style={line width=0.75pt}} 

\begin{tikzpicture}[x=0.75pt,y=0.75pt,yscale=-1,xscale=1]

\draw    (256.88,2.01) -- (255.7,247.53) ;
\draw [shift={(256.89,-0.99)}, rotate = 90.28] [fill={rgb, 255:red, 0; green, 0; blue, 0 }  ][line width=0.08]  [draw opacity=0] (8.93,-4.29) -- (0,0) -- (8.93,4.29) -- cycle    ;
\draw    (80.59,247.54) -- (444.69,246.94) ;
\draw [shift={(447.69,246.94)}, rotate = 539.9100000000001] [fill={rgb, 255:red, 0; green, 0; blue, 0 }  ][line width=0.08]  [draw opacity=0] (10.72,-5.15) -- (0,0) -- (10.72,5.15) -- (7.12,0) -- cycle    ;
\draw  [dash pattern={on 4.5pt off 4.5pt}]  (198.41,9.27) -- (197.04,248.53) ;
\draw  [dash pattern={on 4.5pt off 4.5pt}]  (80.96,173.75) -- (440.61,175.31) ;
\draw  [dash pattern={on 4.5pt off 4.5pt}]  (80.96,154.09) -- (435.22,154.27) ;
\draw    (202.09,9.51) .. controls (199.06,139.5) and (255.36,154.25) .. (253.5,8.47) ;
\draw    (312.77,246.94) .. controls (337.04,129.17) and (378.12,156.59) .. (391.14,160.03) .. controls (404.16,163.46) and (411.34,171.31) .. (435.22,171.31) ;
\draw    (82.21,176.23) .. controls (121.01,177.03) and (131.77,187.12) .. (138.82,246.94) ;
\draw  [dash pattern={on 4.5pt off 4.5pt}]  (228.61,112.71) -- (255.71,112.52) ;
\draw  [dash pattern={on 4.5pt off 4.5pt}]  (369.61,156.46) -- (369.71,249.02) ;
\draw    (79.81,227.94) -- (100.81,247.54) ;
\draw    (79.81,217.14) -- (111.61,247.54) ;
\draw    (79.81,205.34) -- (124.61,247.54) ;
\draw    (79.81,193.74) -- (136.61,247.54) ;
\draw    (79.81,238.74) -- (89.81,247.54) ;
\draw    (79.81,182.26) -- (137.5,237.29) ;
\draw    (86.07,176.83) -- (135.81,224.2) ;
\draw    (101.41,178) -- (131.01,206.2) ;
\draw    (328.61,193.26) -- (386.01,247.54) ;
\draw    (333.18,184.55) -- (400.21,247.54) ;
\draw    (338.18,175.8) -- (413.41,247.54) ;
\draw    (343.47,168.8) -- (426.61,247.54) ;
\draw    (349.75,163.09) -- (435.22,243.94) ;
\draw    (356.47,158.49) -- (435.22,231.2) ;
\draw    (365.61,155.06) -- (435.22,218.94) ;
\draw    (381.09,156.92) -- (435.22,204.94) ;
\draw    (406.23,166.06) -- (435.22,191.2) ;
\draw    (423.23,170.78) -- (435.22,180.54) ;
\draw    (325.18,202.43) -- (373.22,247.54) ;
\draw    (322.36,211.6) -- (360.17,246.94) ;
\draw    (319.18,220.92) -- (348.02,247.54) ;
\draw    (316.47,230.92) -- (335.22,247.54) ;
\draw    (314.04,240.63) -- (322.42,247.54) ;
\draw    (202.4,9.51) -- (251.83,56.31) ;
\draw    (201.6,20.71) -- (250.03,66.51) ;
\draw    (202,34.31) -- (248.14,77.88) ;
\draw    (202.8,48.71) -- (245.64,88.88) ;
\draw    (204.6,64.26) -- (242.03,98.06) ;
\draw    (207.8,79.51) -- (238.63,106.11) ;
\draw    (212.95,95.71) -- (231.36,111.71) ;
\draw    (216.4,9.51) -- (252.89,44.88) ;
\draw    (230.55,9.51) -- (253.14,30.88) ;
\draw    (246,9.51) -- (253.39,18.38) ;
\draw  [fill={rgb, 255:red, 0; green, 0; blue, 0 }  ,fill opacity=1 ] (310.43,247.32) .. controls (310.43,245.7) and (311.75,244.38) .. (313.37,244.38) .. controls (315,244.38) and (316.31,245.7) .. (316.31,247.32) .. controls (316.31,248.95) and (315,250.26) .. (313.37,250.26) .. controls (311.75,250.26) and (310.43,248.95) .. (310.43,247.32) -- cycle ;
\draw  [fill={rgb, 255:red, 0; green, 0; blue, 0 }  ,fill opacity=1 ] (136.33,247.23) .. controls (136.33,245.66) and (137.6,244.38) .. (139.18,244.38) .. controls (140.75,244.38) and (142.03,245.66) .. (142.03,247.23) .. controls (142.03,248.8) and (140.75,250.08) .. (139.18,250.08) .. controls (137.6,250.08) and (136.33,248.8) .. (136.33,247.23) -- cycle ;

\draw (251.2,253) node [anchor=north west][inner sep=0.75pt]    {$0$};
\draw (189.8,253) node [anchor=north west][inner sep=0.75pt]    {$-1$};
\draw (307,253.8) node [anchor=north west][inner sep=0.75pt]    {$1$};
\draw (436.67,252.73) node [anchor=north west][inner sep=0.75pt]    {$\alpha $};
\draw (262,3.07) node [anchor=north west][inner sep=0.75pt]    {$c$};
\draw (260.4,175.2) node [anchor=north west][inner sep=0.75pt]    {$1/4$};
\draw (254.4,97.4) node [anchor=north west][inner sep=0.75pt]  [font=\normalsize]  {\,$2+\sqrt{3} $};
\draw (254.4,133) node [anchor=north west][inner sep=0.75pt]  [font=\normalsize]  {\,$2-\sqrt{3}$};
\draw (343.6,249.94) node [anchor=north west][inner sep=0.75pt]    {$3+2\sqrt{3}$};
\draw (131,254.6) node [anchor=north west][inner sep=0.75pt]    {$-3$};

\end{tikzpicture}

\end{center}
\caption{Regions of the $(\alpha,c)$-parameter space where the operator $-\Delta_\alpha + cK_\alpha$ is essentially self-adjoint.}\label{region}
\end{figure}

The regions where $-\Delta_\alpha+cK_\alpha$ is essentially self-adjoint are plotted in Figure \ref{region}. Note that for some of the quantizations listed earlier, $-\Delta_\alpha+cK_\alpha$ is essentially self-adjoint for $|\alpha|$ sufficiently big. 
The $\alpha$-Grushin cylinder is an interesting geometric structure studied in \cite{boscain-neel,Boscain-Prandi-JDE-2016,Gallone-Michelangeli-Pozzoli-2020}. 
For $\alpha=0$ it is a flat cylinder, for $\alpha$ positive integer is a $(\alpha+1)$-step $2$-ARS; for $\alpha$ negative it describes a conic-like surface (in particular for $\alpha=-1$ describes a flat two-dimensional cone).

\begin{remark}\label{rmk:conjecture}
Notice that for $\alpha=1$ the $\alpha$-Grushin cylinder is the standard Grushin cylinder, for which in Proposition \ref{alpha-cyl} we obtain that $-\Delta_1+cK_1$, $c\geq0$, is essentially self-adjoint if and only if $c=0$. This suggests that the hypothesis $c\in [0,1/2)$ of Theorem \ref{theorem} is technical and that the same result could be extended to the range of values $c\geq 0$.
\end{remark}

The proof of Proposition \ref{alpha-cyl}, which is instructive since it is simple and presents already some crucial ingredients necessary for the general theory, is given in Section \ref{sec:essop}.

\medskip

\medskip\noindent {\bf Structure of the paper.} In Section \ref{s-ARS} we give the key definition and results for $2$-ARS. In Section \ref{sec:essop} we introduce the basic concepts to study the self-adjointness of symmetric operators and give a proof of Proposition \ref{alpha-cyl}. The proof of Theorem \ref{theorem} spans Sections \ref{sec:grushinzone} and \ref{sec:proofonmanifold}. A local version around a singular region is studied in Section \ref{sec:grushinzone}, where a description of the closure and adjoint curvature Laplacian operators is given. The main tools needed for our proof are the Hardy inequality, the Kato-Rellich perturbation theorem, the Fourier transform, and Sturm-Liouville theory applied here in the context of 2D operators. We then extend the results on the whole manifold in Section \ref{sec:proofonmanifold}.

\medskip

We conclude this introduction by remarking that while an operator of the form 
$-\Delta +  c K(p)$ is useful to describe a quantum particle in a Riemannian manifold, it is 
not  meaningful in the description of the evolution of the heat. 
Indeed a heat equation of the form $\partial_t\phi=  (-\Delta+ c K(p) )\phi  $ would describe the evolution of a random particle on a Riemannian manifold with a rate of killing proportional to the Gaussian curvature.

\section{2D almost-Riemannian structures}
\label{s-ARS}

\begin{definition}
\label{d-ugo-ar}
Let $M$ be a 2D connected smooth manifold. 
A $2$-dimensional almost-Riemannian Structure ($2$-ARS) on $M$ is a pair $(\cb,f)$ as follows:
\begin{itemize}
\item[1.] $\cb$ is an Euclidean bundle over $M$ of rank $2$. We denote each fiber by $\fibra_q$, the scalar product on $\fibra_q$ by $\metrup{\cdot}{\cdot}{q}$ and the norm of $u\in\fibra_q$ as $|u|=\sqrt{\metrup{u}{u}{q}}.$

\item[2.] $f:\cb\to TM$ is a smooth map that is a morphism of vector bundles i.e., $f(U_q)\subseteq T_qM$ and $f$ is linear on fibers.

\item[3.] the {\em distribution} $\distr=\{f(\sigma)\mid\sigma:M\to\cb$ smooth section$\}$, is a family of vector fields satisfying the H\"ormander condition, i.e., defining $\distr_1:=\distr$, $\distr_{i+1}:=\distr_i+[\distr_1,\distr_i]$, for $i\geq 1$, there exists $s\in{\bf N}$ such that $\distr_s(q)=T_q M$.

\end{itemize}
\end{definition}
A particular case of 2-ARSs is given by Riemannian surfaces. In this case $\cb=TM$ and $f$ is the identity.

Let us recall few key definitions and facts. We refer to \cite{ABB} for more details.
\begin{itemize}
\item 
Let $\distr_p=\{X(p)\mid X\in\distr\}=f(U_p)\subseteq T_pM$.
The set of points in $M$ such that $\dim(\distr_p)<2$ is called {\it singular set} and it is denoted by $Z$.
Since $\distr$  satisfies the H\"ormander condition, the subspace $\distr_p$ is nontrivial for every $p$ and $Z$ coincides with the set of points $p$ where $\distr$ is one-dimensional. The $2$-ARS is said to be {\em genuine} if $Z\neq\emptyset$. The $2$-ARS is said to be {\em $2$-step} if for every $p\in M$ we have $\mathcal{D}_p+[\mathcal{D},\mathcal{D}]_p=T_pM$.

\item The {\em (almost-Riemannian) norm}  of a vector $v\in \distr_p$ is 
$$\norm{v}:= \min \{|u|, \, u\in \fibra_{p}\  \text{ s.t. }\  v=f(p,u)\}.$$
\item An {\em admissible curve} is a Lipschitz curve $\gamma:[0,T]\to M$ such that  there exists a measurable and essentially bounded function 
$u: [0,T]\ni t \mapsto u(t)\in \fibra_{\gamma(t)}$,
called   \emph{control function}, such that 
$
\dot \gamma(t)=f(\gamma(t),u(t)),$ for a.e. $t\in [0,T]$. Notice there may be more than one control corresponding to the same admissible curve.

\item If  $\gamma$ is admissible then $t\to\norm{\dot{\gamma}(t)}$ is measurable.
 The {\em (almost-Riemannian) length} of an admissible curve $\gamma:[0,T]\to M$  is  
$$\ell(\gamma):=\int_0^T\norm{\dot{\gamma}(t)} dt.$$
\item The {\em (almost-Riemannian) distance} between two points $p_{0},p_{1}\in M$ is  
$$
d(p_0,p_1)= \inf \{\ell(\gamma)\,|\,  \gamma:[0,T]\to M \text{ admissible},\  \gamma(0)=p_0,\  \gamma(T)=p_1\}.
$$
Thanks to the bracket-generating condition, the Chow-Rashevskii theorem
 guarantees that 
$(M,d)$ is a metric space and that the topology induced by $(M,d)$ is equivalent to the manifold topology.

\item Given a  local trivialization $\Omega\times {\bf R}^{2}$ of $\cb$,  an \emph{orthonormal frame} for the $2$-ARS on  $\Omega$ is the pair  of vector fields $\{F_1,F_2\}:=\{f\circ\sigma_1,f\circ\sigma_2\}$
where $\{\sigma_1,\sigma_2\}$ is an orthonormal frame for $\metrup{\cdot}{\cdot}{q}$ on $\Omega\times {\bf R}^{2}$ of $\cb$.
On a local trivialization  the map $f$ can be written as 
 $f(p,u)=u_1F_1(p)+u_2F_2(p)$. When this can be done globally (i.e., when $\cb$ is  the trivial bundle) we say that the $2$-ARS  is {\em free}.

Notice that orthonormal frames in the sense above are orthonormal frames  in the Riemannian sense out of the singular set.

\item Locally, for a $2$-ARS,  it is always possible to find a system of coordinates and an orthonormal frame that in these coordinates has the form
\begin{eqnarray}
F_1(x,y)=\left(
\begin{array}{c}
1\\0
\end{array}
\right),~~
F_2(x,y)=\left(
\begin{array}{c}
0\\\ff(x,y)
\end{array}
\right),
\label{eq-100f}
\end{eqnarray}
where $\ff:\Omega\to{\bf R}$ is a smooth function. In these coordinates we have that $Z=\{(x,y)\in\Omega\mid \ff(x,y)=0\}$. Using this orthonormal frame one immediately gets:
\begin{proposition}
The $2$-ARS is $2$-step in $\Omega$ if and only if for every 
 $(x,y)\in\Omega$  such that $\ff(x,y)=0$, we have $\partial_x\ff(x,y)\neq0$.
\end{proposition}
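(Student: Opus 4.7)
The plan is a direct local computation using the given normal form. First, I dispatch the easy case: at any point $p\in\Omega$ with $\ff(p)\neq 0$, the frame vectors $F_1(p)=\partial_x|_p$ and $F_2(p)=\ff(p)\partial_y|_p$ are already linearly independent, so $\mathcal{D}_p=T_pM$ and the $2$-step identity $\mathcal{D}_p+[\mathcal{D},\mathcal{D}]_p=T_pM$ holds automatically. Thus the only points where the condition is nontrivial are the singular points $p\in Z=\{\ff=0\}\cap\Omega$, where $F_2(p)=0$ and $\mathcal{D}_p=\mathrm{span}\{\partial_x|_p\}$ is one-dimensional.

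Next, I compute the bracket of the frame:
\[
[F_1,F_2]=[\partial_x,\ff\,\partial_y]=(\partial_x\ff)\,\partial_y,
\]
so at any $p\in Z$ we have $[F_1,F_2](p)=\partial_x\ff(p)\cdot\partial_y|_p$.

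The key step is to show that at $p\in Z$ the subspace $[\mathcal{D},\mathcal{D}]_p$ contains no vector outside $\mathrm{span}\{F_1(p),[F_1,F_2](p)\}$. Every $X\in\mathcal{D}$ can be written locally as $X=aF_1+bF_2$ with $a,b\in C^\infty(\Omega)$, so for $X=aF_1+bF_2$ and $Y=cF_1+dF_2$ the Leibniz-type identity $[fU,gV]=fg[U,V]+f(Ug)V-g(Vf)U$ yields
\[
[X,Y]=(ad-bc)[F_1,F_2]+\alpha F_1+\beta F_2,
\]
where $\alpha,\beta\in C^\infty(\Omega)$ are expressions linear in $a,b,c,d$ and in $F_1 a,F_1 b,F_1 c,F_1 d,F_2 a,F_2 b,F_2 c,F_2 d$. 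Now evaluate at $p\in Z$: the term $\beta(p)F_2(p)$ vanishes because $F_2(p)=0$, and all contributions to $\alpha(p)$ that involve $F_2$ acting on a function vanish because $F_2=\ff\,\partial_y$ and $\ff(p)=0$. Hence
\[
[X,Y](p)\in\mathrm{span}\{F_1(p),[F_1,F_2](p)\}.
\]
Conversely, $[F_1,F_2](p)$ itself lies in $[\mathcal{D},\mathcal{D}]_p$, so equality holds.

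Combining these computations, for $p\in Z$,
\[
\mathcal{D}_p+[\mathcal{D},\mathcal{D}]_p=\mathrm{span}\bigl\{\partial_x|_p,\;\partial_x\ff(p)\,\partial_y|_p\bigr\},
\]
which equals $T_pM=\mathrm{span}\{\partial_x|_p,\partial_y|_p\}$ if and only if $\partial_x\ff(p)\neq 0$. Together with the automatic case $\ff(p)\neq 0$, this proves the stated equivalence. No step is a real obstacle; the only subtlety is the careful tracking of which terms in the bracket expansion vanish at $p$ thanks to $F_2(p)=0$ and $\ff(p)=0$.
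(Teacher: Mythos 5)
Your proof is correct and follows exactly the computation the paper leaves implicit (the paper states the proposition as an immediate consequence of the normal form \eqref{eq-100f}): the bracket $[F_1,F_2]=(\partial_x\ff)\,\partial_y$ together with the observation that at a point of $Z$ the distribution is $\mathrm{span}\{\partial_x\}$. Your extra care in checking that $[\mathcal{D},\mathcal{D}]_p$ does not exceed $\mathrm{span}\{F_1(p),[F_1,F_2](p)\}$ at singular points is exactly the detail the paper omits, and it is handled correctly.
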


Moreover, the implicit function theorem applied to the function $\ff$ directly implies:\begin{proposition}
If the  $2$-ARS is genuine and $2$-step then $Z$ is a closed embedded one dimensional submanifold.
\end{proposition}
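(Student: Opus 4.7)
The plan is to work in the local normal form \eqref{eq-100f} and invoke the implicit function theorem exactly as the sentence preceding the statement suggests. The preceding proposition already packages the $2$-step hypothesis into the useful analytic condition $\partial_x\ff\neq 0$ on $\{\ff=0\}$, so the proof reduces to two essentially routine observations: closedness and the submanifold property.

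First I would establish that $Z$ is closed. Intrinsically $Z$ is the locus where the morphism $f:\cb\to TM$ drops rank, and rank-dropping loci are closed. Concretely, in any local trivialization with orthonormal frame $(F_1,F_2)$, one has $\dim\distr_p<2$ iff $F_1(p)\wedge F_2(p)=0$, and in the coordinates of \eqref{eq-100f} this reads simply $\ff(x,y)=0$; thus $Z$ is locally the zero set of a smooth function, hence closed. Since the property is local, $Z$ is closed in $M$.

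Next I would verify the embedded $1$-submanifold structure pointwise. Fix $p_0\in Z$ (which exists because the structure is genuine). By the local normal form recalled just before the statement, choose coordinates on a neighborhood $\Omega$ of $p_0$ in which the orthonormal frame is \eqref{eq-100f}, so that $Z\cap\Omega=\{(x,y)\in\Omega\mid \ff(x,y)=0\}$. The preceding proposition, applied under the $2$-step hypothesis, gives $\partial_x\ff(p_0)\neq 0$. Hence $0$ is a regular value of $\ff$ on a neighborhood of $p_0$ (shrinking $\Omega$ if needed to keep $\partial_x\ff\neq 0$), and the implicit function theorem yields an open interval $V$ around the $y$-coordinate of $p_0$ and a smooth function $g:V\to\mathbb{R}$ such that
\[
Z\cap\Omega=\{(g(y),y)\mid y\in V\}.
\]
The map $y\mapsto(g(y),y)$ is a smooth embedding of $V$ into $\Omega$ with image $Z\cap\Omega$, showing $Z$ is a $1$-dimensional embedded submanifold near $p_0$. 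Since $p_0\in Z$ was arbitrary, combined with closedness this gives the statement.

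There is really no substantial obstacle here: the content of the statement has already been converted by the previous proposition into the hypothesis of the implicit function theorem. The only minor care needed is to check that the existence of the normal form \eqref{eq-100f} is available around every singular point (which it is, being a statement about all of $M$), and to note that the regularity condition $\partial_x\ff\neq 0$ persists on a whole neighborhood of $p_0$ by continuity, so the implicit function theorem applies uniformly on a small enough $\Omega$.
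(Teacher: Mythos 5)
Your proof is correct and follows exactly the route the paper intends: the paper states this proposition as a direct consequence of the implicit function theorem applied to $\ff$ in the normal form \eqref{eq-100f}, using the preceding proposition to convert the $2$-step hypothesis into $\partial_x\ff\neq 0$ on $\{\ff=0\}$. Your write-up merely fills in the routine details (local closedness of the zero set and the local graph description), so there is nothing to add.
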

In particular if  $Z$ is compact, each connected component of  $Z$ is diffeomorphic to $S^1$.

\item Out of the singular set $Z$, the structure is Riemannian and the Riemannian metric, the Riemannian area, the Riemannian curvature, and the Laplace-Beltrami operator are easily expressed in the orthonormal frame given by \eqref{eq-100f}:

 \begin{eqnarray}
g_{(x,y)}=\left(
\begin{array}{cc}
1&0\\0&\frac{1}{\ff(x,y)^2}
\end{array}
\right),\label{e-ugo-1/f2}
\end{eqnarray}
 \begin{eqnarray}
\omega_{(x,y)}=\frac{1}{|\ff(x,y)|}dx\,dy,\label{e-ugo-dA}
\end{eqnarray}
 \begin{eqnarray}
K(x,y)=   \frac{\ff(x,y) \partial_{x}^2  \ff(x,y)-2
   \left( \partial_{x}\ff(x,y) \right)^2}{\ff(x,y)^2},\label{e-ugo-k}\\
\Delta=\partial_{x}^2+\ff^2 \partial_{y}^2 -
\frac{\partial_{x}\ff}{\ff}\partial_{x}
+\ff(\partial_{y}\ff) \partial_{y}.\label{e-ugo-D}
\end{eqnarray}

\item To prove the main results of this paper, the following normal forms are going to be important.
\begin{proposition}[\cite{ABS}]\label{normalform}
Consider a $2$-step $2$-ARS. For every $p\in M$ there exist a neighborhood $U$ of $p$, a system of coordinates in $U$, and an orthonormal frame $\{X_1,X_2\}$ for the ARS on $U$, such that $p=(0,0)$ and $\{X_1,X_2\}$ has one of the following forms:
\begin{enumerate}
\item [F1.] $X_1(x,y)=\dfrac{\partial}{\partial x}, \qquad X_2(x,y)=e^{\phi(x,y)}\dfrac{\partial}{\partial y},$
\item [F2.] $X_1(x,y)=\dfrac{\partial}{\partial x}, \qquad X_2(x,y)=xe^{\phi(x,y)}\dfrac{\partial}{\partial y},$
\end{enumerate}
where $\phi$ is a smooth function such that $\phi(0,y)=0$.
 \end{proposition}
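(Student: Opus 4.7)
The argument splits on whether $p$ is regular (yielding~F1) or singular (yielding~F2). In both cases I would start from the local pre-normal form \eqref{eq-100f}: coordinates $(x,y)$ centred at $p$ and an orthonormal frame $X_1=\partial_x$, $X_2=\ff(x,y)\partial_y$ for some smooth $\ff$. The $2$-step assumption translates into $\partial_x\ff\ne 0$ wherever $\ff=0$.

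\textbf{Case F1 ($p\notin Z$).} Here $\ff(0,0)\ne 0$; after replacing $y$ by $-y$ if necessary we may assume $\ff>0$ on a neighbourhood. Writing $\ff=e^{\tilde\phi}$ gives $X_2=e^{\tilde\phi}\partial_y$, but $\tilde\phi(0,y)$ is generally nonzero. The elementary reparametrization $\tilde y(y):=\int_0^y ds/\ff(0,s)$ is a local diffeomorphism which rescales the coefficient of $X_2$ so that it equals $1$ along $\{x=0\}$; in coordinates $(x,\tilde y)$ the frame becomes $(\partial_x,e^\phi\partial_{\tilde y})$ with $\phi(0,\cdot)=0$.

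\textbf{Case F2 ($p\in Z$).} Now $\ff(0,0)=0$ and $\partial_x\ff(0,0)\ne 0$, so the implicit function theorem presents $Z$ locally as a smooth graph $\{x=\beta(y)\}$ with $\beta(0)=0$, and Hadamard's lemma factorizes $\ff(x,y)=(x-\beta(y))H(x,y)$ with $H(0,0)\ne 0$. I would seek a diffeomorphism $(x,y)\mapsto(\check x,\check y)$, constrained by $\check x|_Z=0$, \emph{together with} a smooth bundle-rotation angle $\theta(x,y)$ such that the rotated pull-back frame is $(\partial_{\check x},\check x\, e^{\phi}\partial_{\check y})$. Writing out the four identities expressing $X_1$ and $X_2$ in the new coordinates yields a determined first-order system for the triple $(\check x,\check y,\theta)$; the non-degeneracy $H(0,0)\ne 0$ allows one to solve it by an implicit function argument, producing a smooth solution with $\check x=0$ on $Z$. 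A final reparametrization of $\check y$ as in Case F1 enforces $\phi(0,\cdot)=0$.

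The main obstacle is precisely this simultaneous rectification in Case~F2. A tempting sequential procedure --- first rectify $Z$ by the shift $\hat x=x-\beta(y)$, then rotate the bundle frame to kill the spurious $\partial_{\hat x}$-component of $X_2$ using $\tan\theta=-\hat x\,\tilde H(\hat x,\hat y)\beta'(\hat y)$ where $\tilde H(\hat x,\hat y):=H(\hat x+\beta(\hat y),\hat y)$, and finally re-straighten $X_1$ by its own flow --- nearly works, but the last step generally re-introduces a $\partial_{\check x}$-component in $X_2$ unless the $\hat y$-dependence of $\tilde H$ is very special. Because of this feedback the two adjustments must be coupled; the role of the $2$-step hypothesis, which forces $H$ to be nonvanishing near $p$, is precisely to guarantee solvability of the resulting coupled system.
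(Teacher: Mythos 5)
The paper itself offers no proof of this statement --- it is quoted from \cite{ABS} --- so I am measuring your attempt against the construction given there. Your Case F1 is correct as far as it goes: granting the pre-normal form \eqref{eq-100f}, the reparametrization $\tilde y=\int_0^y ds/\ff(0,s)$ does normalize the coefficient of $X_2$ to $1$ along $\{x=0\}$. Your reduction of Case F2 to the factorization $\ff=(x-\beta(y))H$ with $H(0,0)\neq 0$ is also correct. The gap is the central step of Case F2. You reduce everything to a ``determined first-order system for $(\check x,\check y,\theta)$'' and assert that it can be solved ``by an implicit function argument'' because $H(0,0)\neq 0$. The implicit function theorem solves pointwise functional equations, not systems of first-order PDEs: a determined system of three first-order PDEs for three unknown functions of two variables is not automatically solvable, and nothing in your write-up identifies the relevant integrability condition, let alone verifies it. You have correctly diagnosed why the naive sequential procedure (straighten $Z$, rotate the frame, re-straighten $X_1$) fails, but diagnosing the obstruction is not the same as overcoming it; as written, the existence of the simultaneous rectification --- which is precisely the content of F2 --- is asserted rather than proved.

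The proof in \cite{ABS} sidesteps the PDE system entirely by a geometric construction: one takes $y$ to be a parameter along $Z$ and $x$ to be the signed almost-Riemannian distance from $Z$, measured along the integral curves of a unit section of $\mathcal{D}|_Z$; these are geodesics hitting $Z$ transversally by the $2$-step hypothesis, so the resulting map is a local diffeomorphism. Smoothness of this distance near $Z$ together with the Gauss lemma (minimizing geodesics from $Z$ are orthogonal to the level sets of the distance) gives at once that in these coordinates one frame vector is $\partial_x$ and the other has no $\partial_x$-component; the Hadamard factorization and a reparametrization of $y$ (exactly as in your Case F1) then yield $xe^{\phi}\partial_y$ with $\phi(0,\cdot)=0$. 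If you want to keep your analytic route you must either exhibit and verify the integrability of your coupled system, or replace the ``implicit function argument'' by this distance-function/Gauss-lemma construction.
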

 A point $p\in M$ is said to be a \emph{Riemannian point} if $\mathcal{D}_p$ is two-dimensional, and hence a local description around $p$ is given by F1. A point $p$ such that $\mathcal{D}_p$ is one-dimensional, and thus $\mathcal{D}_p+[\mathcal{D},\mathcal{D}]_p$ is two-dimensional, is called a \emph{Grushin point} and a local description around $p$ is given by F2. 
 
When $M$ is compact orientable, each connected component of $Z$ is diffeomorphic to $S^1$ and admits a tubular neighborhood diffeomorphic to  ${\bf R}\times S^1$. In this case  the normal form F2 can be extended to the whole neighborhood.

\begin{proposition}[\cite{Boscain-Laurent-2013}]
\label{p-globalizziagite}
 Consider a $2$-step $2$-ARS on a compact  orientable manifold. Let $W$ be a connected component of $Z$. Then there exist a tubular neighborhood $U$ of $W$  diffeomorphic to ${\bf R}\times S^1$, a system of coordinates in $U$, and 
 an orthonormal frame $\{X_1,X_2\}$ of the 2-ARS on $U$ such that
 $W=\{(0,y),~y\in S^1\}$ and $\{X_1,X_2\}$ has the form
 \begin{equation}
\label{global-grushin}
 X_1(x,y)=\dfrac{\partial}{\partial x}, \qquad X_2(x,y)=xe^{\phi(x,y)}\dfrac{\partial}{\partial y}.
\end{equation}
\end{proposition}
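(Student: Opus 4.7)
The plan is to globalize the local normal form F2 using a Fermi-type chart along $W$. Since $W$ is a compact connected embedded $1$-submanifold of $M$, $W\cong S^1$; orientability of $M$ combined with the orientability of $W$ trivializes the normal bundle of $W$ in $M$, and the tubular neighborhood theorem then provides an open neighborhood $U$ of $W$ diffeomorphic to $\mathbb R\times S^1$.

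I then fix a parametrization $\gamma:S^1\to W$ compatible with the chosen orientations and, using the $2$-step hypothesis (which, as recalled before the proposition, makes $\mathcal D_p$ transverse to $W$ for every $p\in W$, so that $\mathcal D|_W\subset TM|_W$ is a smooth rank-one subbundle), I pick the smooth unit section $N$ of $\mathcal D|_W$ oriented consistently with the normal direction of $W$ in $M$. A direct computation in F2 shows that in those local coordinates the $x_{\mathrm{loc}}$-lines are unit-speed geodesics of the almost-Riemannian structure which cross $W$ transversally and smoothly; hence the map $\Phi(x,y)$ obtained by following at parameter $x$ the geodesic issued from $\gamma(y)$ with initial velocity $N(\gamma(y))$ is smooth and well-defined for $|x|$ small. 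By compactness of $W$ and the inverse function theorem, $\Phi$ is a diffeomorphism from a neighborhood of $\{0\}\times S^1$ onto an open tubular neighborhood of $W$; after an odd smooth reparametrization of $x$ the chart domain becomes $\mathbb R\times S^1$.

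In the resulting chart $(x,y)$, the Gauss lemma applied on each Riemannian component of $U\setminus W$ yields $g=dx^2+G(x,y)\,dy^2$ with $G>0$. Comparing this on overlaps with the metric in F2 (whose transversal coordinate is likewise an arclength along a geodesic orthogonal to $W$) forces $x_{\mathrm{loc}}=\pm x$ and $y_{\mathrm{loc}}=h(y)$ for some local diffeomorphism $h$, so that
\begin{equation*}
G(x,y)=\bigl(x\,e^{\phi(x,y)}\bigr)^{-2},
\end{equation*}
where $\phi$ is smooth across $\{x=0\}$ because in F2 the function $x^2G=e^{-2\phi_{\mathrm{loc}}}$ is smooth and positive. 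Setting $X_1:=\partial_x$ and $X_2:=x\,e^{\phi(x,y)}\partial_y$, one then obtains $g(X_i,X_j)=\delta_{ij}$ on $U\setminus W$; moreover $X_1|_W$ is a unit section of $\mathcal D|_W$ while $X_2|_W\equiv 0$, matching F2 pointwise.

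The main technical obstacle I expect is the final step: certifying that $(X_1,X_2)$ is actually the image under $f$ of globally defined orthonormal sections of the Euclidean bundle $\mathbf U$, not merely vector fields satisfying the displayed Riemannian identities. Locally such sections exist by F2, and the associated transition cocycle for $\mathbf U$ restricts to the identity along $W$. Using orientability of $M$ one reduces it to $SO(2)$-valued, and since $U$ deformation retracts onto $W\cong S^1$ and $SO(2)$-bundles on $S^1$ are trivial, the cocycle is a coboundary; this yields the required global orthonormal sections $\sigma_1,\sigma_2$ of $\mathbf U$ with $f\circ\sigma_i=X_i$, and is the step where the orientability hypothesis on $M$ is used in an essential way.
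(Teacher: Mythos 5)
The paper does not actually prove Proposition \ref{p-globalizziagite}: it is imported from \cite{Boscain-Laurent-2013} as a quoted result, so there is no in-paper argument to compare yours against line by line. That said, your construction --- trivialize the normal bundle of $W\cong S^1$ using orientability, take the transversal line field $\mathcal{D}|_W$ supplied by the $2$-step hypothesis, flow along the transversal geodesics to build a Fermi chart, and identify the metric with the F2 form by comparison on overlaps --- is exactly the standard mechanism behind the cited result (the globalization of Proposition \ref{normalform} along a compact connected component of $Z$), and the closing cocycle argument showing that $(X_1,X_2)$ really lifts to global orthonormal sections of $\cb$ is a point usually left implicit which you handle correctly. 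Two repairs are needed. First, the ``odd smooth reparametrization of $x$'' at the end of your second paragraph is inconsistent with the rest of the proof: if $x\mapsto s(x)$ is a nontrivial diffeomorphism, then $X_1=\partial_x=s'(x)\,\partial_s$ is no longer $\partial/\partial s$, so the frame loses the form \eqref{global-grushin} in the new coordinates. Simply keep $x\in(-\epsilon,\epsilon)$ with $\epsilon$ uniform in $y$ (compactness of $W$); the resulting $U\cong(-\epsilon,\epsilon)\times S^1$ is already diffeomorphic to ${\bf R}\times S^1$, which is all the statement asks --- indeed the paper itself treats the chart as local in $x$ and extends $\phi$ constantly in Section \ref{sec:grushinzone}. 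Second, the ``Gauss lemma on each Riemannian component'' cannot be invoked off the shelf, because your geodesics emanate from the singular curve $W$, where $g$ is undefined, so there is no initial hypersurface inside the Riemannian region to which they are orthogonal; the rigorous route is the one you also give, namely the overlap comparison with the local F2 charts (in which $g=dx_{\mathrm{loc}}^2+G_{\mathrm{loc}}\,dy_{\mathrm{loc}}^2$ holds by construction of the normal form), which forces $x_{\mathrm{loc}}=\pm x$ and $y_{\mathrm{loc}}=h(y)$ and simultaneously yields the vanishing of the off-diagonal term and the smoothness of $\phi$ across $\{x=0\}$. With those adjustments the argument is complete.
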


\end{itemize}

\section{Self-adjointness of operators}
\label{sec:essop}
Let $A$ be a linear operator on a separable Hilbert space $\mathcal{H}$, $(\cdot,\cdot)_{\mathcal{H}}$. The linear subspace of $\mathcal{H}$ where the action of $A$ is well-defined is called the domain of $A$, denoted by $\mathcal{D}(A)$. We shall always assume that $\mathcal{D}(A)$ is dense in $\mathcal{H}$. Following \cite{rs2}, we recall several definitions and properties of linear operators:
\begin{itemize}
 \item $A$ is said to be \emph{symmetric} if $(A u, v)_{\mathcal{H}}=(u, Av)_{\mathcal{H}}$ for all $u,v\in\mathcal{D}(A)$.
 \item $A$ is said to be \emph{closed} if $\mathcal{D}(A)$ with the norm $\|\cdot\|_A:=\|\cdot\|_{\mathcal{H}}+\|A\cdot\|_{\mathcal{H}}$ is complete.
 \item A linear operator $B$, $\mathcal{D}(B)$ such that $\mathcal{D}(A)\subset\mathcal{D}(B)$ and $Bu=Au$ for all $u\in \mathcal{D}(A)$ is called an \emph{extension} of $A$. In this case we write $A\subset B$.
 \item If $A$ is symmetric and densely defined, there exists a minimal closed symmetric extension $\overline{A}$ of $A$, which is said to be the \emph{closure} of $A$. We describe this construction: take any sequence $(u_n)_n\subset \mathcal{D}(A)$ which converges to a limit $u\in\mathcal{H}$, and for which the sequence $(Au_n)_n$ converges to a limit $w\in\mathcal{H}$. Then, by symmetry of $A$, we have that 
$$(w,v)_{\mathcal{H}}=\lim_{n\to\infty} (Au_n,v)_{\mathcal{H}}=\lim_{n\to\infty} (u_n,Av)_{\mathcal{H}}=(u,Av)_{\mathcal{H}}, \quad \forall v\in\mathcal{D}(A).$$
Since $\mathcal{D}(A)$ is dense in $\mathcal{H}$, $w$ is uniquely determined by $u$. The closure of $A$ is defined by setting $\overline{A}u=w$, and the domain $\mathcal{D}(\overline{A})$ is the closure of $\mathcal{D}(A)$ with respect to the norm $\|\cdot\|_A$. One can easily see that $\overline{A}$ is closed, symmetric, and any closed extension of $A$ is an extension of $\overline{A}$ as well.
\item Given a densely defined linear operator $A$, the domain $\mathcal{D}(A^*)$ of the adjoint operator $A^*$ is the set of all $v\in\mathcal{H}$ such that there exists $w\in\mathcal{H}$ with $(Au,v)_{\mathcal{H}}=(u,w)_{\mathcal{H}}$ for all $u\in\mathcal{D}(A)$. The adjoint of $A$ is defined by setting $A^*v=w$. 
\item $A$ is said to be \emph{self-adjoint} if $A^*=A$, that is, $A$ is symmetric and $\mathcal{D}(A^*)=\mathcal{D}(A)$. 
\item $A$ is said to be \emph{essentially self-adjoint} if its closure is self-adjoint.
\item If $B$ is a closed symmetric extension of $A$, then $A\subset\overline{A}\subset B\subset A^*$.
\item If a densely defined operator $B$ is such that $ \mathcal{D}(A)\subset \mathcal{D}(B)$ and there exist $a,b\geq 0$ such that
\begin{equation}
\label{eq:estim}
\|B u\|\leq a\|A u\|+b\|u\|,\qquad \forall u\in \mathcal{D}(A),
\end{equation}
then $B$ is said to be \emph{small with respect to} $A$ (or also, Kato-small w.r.t. $A$). The infimum of the set of $a\geq 0$ such that \eqref{eq:estim} holds is called the $A$-bound of $B$. If $a$ can be chose arbitrarily small, $B$ is said to be \emph{infinitesimally} small w.r.t $A$. 
\end{itemize}

We will need the following classical result in perturbation theory:
\begin{proposition}[Kato-Rellich's Theorem]
\label{lemma:Kato}
Let $A,B$ be two densely defined operators and assume that $B$ is small with respect to $A$. Then $D(\overline{A})\subset D(\overline{A+B})$. If moreover $a<1$ in~\eqref{eq:estim}, then $D(\overline{A})= D(\overline{A+B})$.
\end{proposition}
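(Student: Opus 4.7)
The plan is to argue purely at the level of graph norms, using the fact that $B$-smallness transfers Cauchyness between the sequences $(Au_n)$ and $(Bu_n)$, so that approximating sequences in the $A$-graph norm remain approximating sequences in the $(A+B)$-graph norm and vice versa. Note first that, since $\mathcal{D}(A)\subset \mathcal{D}(B)$ by hypothesis, the natural domain of $A+B$ is simply $\mathcal{D}(A)$, so there is no ambiguity in what the closure $\overline{A+B}$ refers to.

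For the first inclusion, I would pick $u\in \mathcal{D}(\overline{A})$ and a sequence $u_n\in \mathcal{D}(A)$ with $u_n\to u$ in $\mathcal{H}$ and $Au_n\to \overline{A}u$. Applying the smallness bound \eqref{eq:estim} to the differences, one gets
\begin{equation*}
\|B u_n - B u_m\|\leq a\|A u_n - A u_m\|+b\|u_n-u_m\|,
\end{equation*}
so that $(Bu_n)$ is Cauchy, hence convergent to some $w\in\mathcal{H}$. Therefore $(A+B)u_n\to \overline{A}u + w$, which exhibits $u$ as an element of $\mathcal{D}(\overline{A+B})$ with $\overline{A+B}\,u = \overline{A}u + w$.

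For the reverse inclusion under the assumption $a<1$, the key trick is a reverse triangle inequality: for $u_n\in\mathcal{D}(A)=\mathcal{D}(A+B)$, combining $\|Au_n-Au_m\|\leq \|(A+B)u_n-(A+B)u_m\|+\|Bu_n-Bu_m\|$ with \eqref{eq:estim} applied to $u_n-u_m$ yields
\begin{equation*}
(1-a)\|Au_n - A u_m\|\leq \|(A+B)u_n-(A+B)u_m\|+b\|u_n-u_m\|.
\end{equation*}
Hence, for any $u\in\mathcal{D}(\overline{A+B})$ with an approximating sequence $u_n\in\mathcal{D}(A)$ such that $u_n\to u$ and $(A+B)u_n$ is Cauchy, the factor $1-a>0$ forces $(Au_n)$ to be Cauchy as well, so $u\in\mathcal{D}(\overline{A})$.

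There is no real obstacle here beyond keeping the domain bookkeeping straight; the only thing to verify carefully is that the smallness estimate, which is a priori only stated on $\mathcal{D}(A)$, can be used on the differences $u_n-u_m$ (which lie in $\mathcal{D}(A)$ by linearity), and that the case $a=1$ must genuinely be excluded for the equality of domains since the prefactor $1-a$ is what makes the reverse direction work.
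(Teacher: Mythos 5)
Your argument is correct. The paper itself offers no proof of this proposition: it is stated as a classical fact from perturbation theory (a closure-domain variant of Kato--Rellich, cf.\ \cite{rs2}), so there is no ``paper's approach'' to compare against. Your graph-norm argument is the standard and complete one: the smallness estimate applied to differences $u_n-u_m\in\mathcal{D}(A)$ shows that Cauchyness of $(Au_n)$ forces Cauchyness of $(Bu_n)$, hence of $((A+B)u_n)$, giving $\mathcal{D}(\overline{A})\subset\mathcal{D}(\overline{A+B})$; and the reverse absorption with prefactor $1-a>0$ gives the converse when $a<1$. You also correctly flag the two bookkeeping points that matter, namely that $\mathcal{D}(A+B)=\mathcal{D}(A)$ because $\mathcal{D}(A)\subset\mathcal{D}(B)$, and that the estimate may be applied to $u_n-u_m$ by linearity. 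The only implicit assumption, shared with the statement itself, is that the closures $\overline{A}$ and $\overline{A+B}$ exist (i.e.\ the operators are closable); in the paper's applications all operators involved are symmetric, so this is automatic.
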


In order to study the self-adjointness and more in general describe the extensions of a symmetric operator $A$, one may use the fundamental Von Neumann decomposition (\cite[Chapter X]{rs2})
\begin{equation}\label{VNdecomposition}
\mathcal{D}(A^*)=\mathcal{D}(\overline{A})\oplus_A \ker(A^*+\mathrm{i})\oplus_A  \ker(A^*-\mathrm{i}),
\end{equation}
where the sum is orthogonal with respect to the scalar product $(\cdot,\cdot)_A=(\cdot,\cdot)_{\mathcal{H}}+(A^*\cdot,A^*\cdot)_{\mathcal{H}}.$
As a first direct consequence of \eqref{VNdecomposition}, one has the fundamental spectral criterion for self-adjointness:
\begin{proposition}\label{key}
Let $A$, $\mathcal{D}(A)$, be a symmetric operator, densely defined on the Hilbert space $\mathcal{H}$. The following are equivalent:
\begin{enumerate}
\item[(a)] $A$ is essentially self-adjoint;
\item[(b)]$\mathrm{Ran}(A\pm \ii)$ is dense in $\mathcal{H}$;
\item[(c)] $\ker(A^*\pm \ii)=\{0\}$.
\end{enumerate}
\end{proposition}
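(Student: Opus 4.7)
The plan is to establish the cycle (a)$\Leftrightarrow$(c) and (b)$\Leftrightarrow$(c), since the Von Neumann decomposition (\ref{VNdecomposition}) stated just above the proposition makes both equivalences almost immediate. The whole argument rests on two standard facts that I would invoke without reproving: for any densely defined operator $T$ one has $\overline{\mathrm{Ran}(T)}=\ker(T^*)^{\perp}$, and for a symmetric operator $A$ the adjoint of $A\pm\ii$ equals $A^*\mp\ii$ (since $\ii$ is bounded and skew-adjoint). I would also use the well-known identity $(\overline{A})^*=A^*$, which holds because $\overline{A}$ and $A$ have the same graph closure.

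For (b)$\Leftrightarrow$(c), I would argue that $\mathrm{Ran}(A+\ii)$ is dense in $\mathcal{H}$ iff $\mathrm{Ran}(A+\ii)^{\perp}=\{0\}$, and by the orthogonal complement characterization this is equivalent to $\ker((A+\ii)^*)=\ker(A^*-\ii)=\{0\}$. The same argument applied to $A-\ii$ gives $\mathrm{Ran}(A-\ii)$ dense iff $\ker(A^*+\ii)=\{0\}$. Combining both signs yields (b)$\Leftrightarrow$(c).

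For (a)$\Leftrightarrow$(c), I would observe that $A$ is essentially self-adjoint means $\overline{A}$ is self-adjoint, i.e.\ $(\overline{A})^*=\overline{A}$; using $(\overline{A})^*=A^*$ this becomes the domain equality $\mathcal{D}(A^*)=\mathcal{D}(\overline{A})$. Plugging this into the Von Neumann decomposition (\ref{VNdecomposition}),
\[
\mathcal{D}(A^*)=\mathcal{D}(\overline{A})\oplus_A \ker(A^*+\ii)\oplus_A\ker(A^*-\ii),
\]
the equality $\mathcal{D}(A^*)=\mathcal{D}(\overline{A})$ holds if and only if the two deficiency subspaces $\ker(A^*\pm\ii)$ are trivial, which is exactly (c).

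There is no real obstacle here: the content of the proposition is entirely encoded in the Von Neumann decomposition (already stated) together with the basic Hilbert space duality $\mathrm{Ran}(T)^{\perp}=\ker(T^*)$. The only points requiring a line of care are the identification $(A\pm\ii)^*=A^*\mp\ii$ and the fact that passing from $A$ to its closure does not alter the adjoint, both of which I would simply cite from the standard reference \cite{rs2}.
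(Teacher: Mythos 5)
Your proposal is correct and follows essentially the same route the paper intends: the paper presents this proposition without proof as a "direct consequence" of the Von Neumann decomposition \eqref{VNdecomposition}, and your argument simply fills in the standard details, combining that decomposition for (a)$\Leftrightarrow$(c) with the elementary duality $\overline{\mathrm{Ran}(T)}=\ker(T^*)^{\perp}$ and the identity $(A\pm\ii)^*=A^*\mp\ii$ for (b)$\Leftrightarrow$(c). No gaps; the steps you cite from \cite{rs2} are exactly the ones a careful reader would expect to be cited.
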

\begin{itemize} 
\item The dimensions of the vector spaces $\ker(A^*+ \ii)$ and $\ker(A^*- \ii)$ are called \emph{deficiency indices} of $A$.
\end{itemize} 
Always using \eqref{VNdecomposition}, one can deduce that $A$ admits self-adjoint extensions if and only if its deficiency indices are equal (\cite[Corollary to Theorem X.2]{rs2}).\\

Another immediate consequence of \eqref{VNdecomposition} is the following fact concerning 1D operators: let $A$ be a 1D Sturm-Liouville operator $-\frac{d^2}{dx^2}+V(x)$, where $V$ is a continuous real function on ${\bf R}_+$, acting on $L^2({\bf R}_+,dx)$ with domain $C^\infty_0({\bf R}_+)$ (for a general introduction to 1D Sturm-Liouville operators, see e.g. \cite[Chapter 15]{schmu_unbdd_sa}). Then, since the eigenvalue equation
$$
-u''(x) +V(x)u(x)=\pm \mathrm{i}\, u(x) 
$$
has always two linearly independent solutions, the quotient $\mathcal{D}(A^*)/\mathcal{D}(\overline{A})$ has at most dimension four. Moreover, let us recall the \emph{limit point-limit circle} Weyl's Theorem (see, e.g, \cite[Appendix to Chapter X.1]{rs2}) which says that the self-adjointness of a 1D Sturm-Liouville operator can be deduced by regarding the solutions to the ODE
\begin{equation}\label{lp-lc}
-u''(x)+V(x)u(x)=0. 
\end{equation}
\begin{itemize}
\item If all solutions to \eqref{lp-lc} are square-integrable near $0$ (respectively $\infty$), then $V$ is said to be in the limit circle case at $0$ (resp. $\infty$). If $V$ is not in the limit circle case at $0$ (resp. $\infty$), it is said to be in the limit point case at $0$ (resp. $\infty$).
\end{itemize}
\begin{proposition}[Weyl's Theorem]\label{weyl}
The operator $-\frac{d^2}{dx^2}+V(x)$ with domain $C^\infty_0({\bf R}_+)$ has deficiency indices 
\begin{itemize}
\item $(2,2)$ if $V$ is in the limit circle case at both $0$ and $\infty$;
\item $(1,1)$ if $V$ is in the limit circle case at one end point and in the limit point at the other;
\item $(0,0)$ if $V$ is in the limit point case at both $0$ and $\infty$.
\end{itemize}
In particular, $-\frac{d^2}{dx^2}+V(x)$ is essentially self-adjoint on $L^2({\bf R}_+,dx)$ if and only if $V$ is in the limit point case at both $0$ and $\infty$. 
\end{proposition}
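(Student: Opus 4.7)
The plan is to use the classical Weyl nested-disc construction together with the invariance of the limit point / limit circle dichotomy under change of the spectral parameter. First, I would reformulate the computation of the deficiency indices in concrete ODE terms: by elliptic regularity for the ordinary differential expression $\tau u := -u'' + V u$ with $V$ continuous, every distributional solution of $A^* u = \pm \ii u$ is automatically $C^2$ on $(0,\infty)$ and satisfies the classical equation $-u'' + V u = \pm \ii u$. Hence
\[
\ker(A^* \mp \ii) = \bigl\{ u \in L^2({\bf R}_+, dx) : -u'' + V u = \pm \ii u \text{ classically on } (0,\infty)\bigr\},
\]
so computing the deficiency indices reduces to counting the $L^2$-dimension of the two-dimensional solution space of this ODE.

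Second, I would prove the central invariance lemma: the dimension of the space of solutions of $-u'' + V u = \lambda u$ which are square-integrable near a fixed endpoint does not depend on $\lambda \in {\bf C}$. The proof is by variation of parameters. Fix two linearly independent solutions $\phi_1, \phi_2$ of $-u'' + V u = 0$ with constant Wronskian. Any solution of the perturbed equation satisfies an integral equation of the form
\[
u(x) = \alpha \phi_1(x) + \beta \phi_2(x) + \lambda \int_{x_0}^x \bigl( \phi_1(x)\phi_2(t) - \phi_2(x)\phi_1(t) \bigr) u(t)\, dt,
\]
and a Gronwall-type estimate in a weighted $L^2$ norm near the endpoint shows that if both $\phi_1, \phi_2$ are in $L^2$ near that endpoint, so is every solution of the perturbed equation; conversely, non-$L^2$ character is preserved. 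This justifies phrasing the dichotomy via the reference equation $-u''+Vu = 0$, as in the proposition.

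Third, for the actual counting I would carry out Weyl's nested discs construction at each endpoint with $\lambda = \ii$. Fix an interior point and let $\phi, \psi$ be the fundamental solutions normalized there. For each $R$, the set of $m\in{\bf C}$ such that $\phi + m\psi$ satisfies a real boundary condition at $R$ is a circle $C_R$ in ${\bf C}$, and the Green-type identity
\[
\int_a^b \bigl( \overline{u}\,\tau u - \overline{\tau u}\, u \bigr)\, dx = \bigl[\overline{u} u' - \overline{u'} u\bigr]_a^b
\]
applied with $\tau u = \ii u$ forces strict monotonicity of the enclosed discs as $R$ approaches the endpoint, because $\mathrm{Im}\,\lambda = 1 \neq 0$. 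Their nested intersection is either a single point (limit point case: up to scalars only one solution is $L^2$ near that endpoint) or a genuine disc (limit circle case: both independent solutions are $L^2$ there). A solution lies in $\ker(A^* - \ii)$ iff it is $L^2$ at both endpoints; combining the two endpoint dichotomies and taking dimensions yields deficiency indices $(2,2)$, $(1,1)$ or $(0,0)$ in the three announced cases. Since $V$ is real, complex conjugation gives a bijection $\ker(A^*-\ii) \leftrightarrow \ker(A^*+\ii)$, so the two indices agree. Essential self-adjointness in the $(0,0)$ case then follows from Proposition \ref{key}. The main technical obstacle is setting up the nested-disc structure rigorously and extracting the dichotomic limit from the Green identity; the rest of the argument is linear algebra combined with the standard Gronwall estimate.
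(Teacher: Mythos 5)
The paper does not actually prove this proposition: it is quoted from Reed--Simon (Theorem X.7 and the Appendix to Section X.1 of \cite{rs2}), so there is no internal proof to compare against, and your sketch follows the same classical route as that source. Two points need attention. First, your ``central invariance lemma'' is false as stated: the dimension of the space of solutions of $-u''+Vu=\lambda u$ that are square-integrable near a fixed endpoint is \emph{not} independent of $\lambda$. For $V=0$ at the endpoint $\infty$, the equation with $\lambda=0$ has solutions $1$ and $x$, neither $L^2$ near $\infty$, while with $\lambda=\ii$ exactly one ray of solutions is $L^2$ there. What the variation-of-parameters/Gronwall argument actually yields is that the property ``\emph{all} solutions are $L^2$ near the endpoint'' is $\lambda$-independent; that is enough for your purposes, but the count ``exactly one $L^2$ ray for $\lambda=\ii$'' at a limit point endpoint must come from the disc construction (at least one, via $\phi+m_\infty\psi$) together with the definition of limit point (at most one), not from transferring a dimension from $\lambda=0$, where the count can be zero.

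Second, and more seriously, the step ``combining the two endpoint dichotomies and taking dimensions'' has a genuine gap in the limit point--limit point case. There you have one ray of solutions of $-u''+Vu=\ii u$ that is $L^2$ near $0$ and one ray that is $L^2$ near $\infty$, both sitting inside a two-dimensional solution space; nothing in your argument prevents these two rays from coinciding, which would give deficiency index $1$ rather than $0$. The standard way to close this uses the Green identity you already wrote: for a nonzero solution $u$ of $\tau u=\ii u$ one gets $\|u\|_{L^2(a,b)}^2=\mathrm{Im}\bigl(\overline{u}\,u'\bigr)(b)-\mathrm{Im}\bigl(\overline{u}\,u'\bigr)(a)$, and one must prove the auxiliary lemma that at a limit point endpoint the Wronskian $W(f,g)$ of any two functions in the maximal domain tends to $0$; applying it with $f=\overline{u}$ and $g=u$ forces $\|u\|=0$. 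That Wronskian-vanishing lemma is the real content of the $(0,0)$ case and is missing from your outline. The rest of the sketch (elliptic regularity identifying $\ker(A^*\mp\ii)$ with classical $L^2$ solutions, the nested discs, equality of the two indices by conjugation since $V$ is real, and the appeal to Proposition \ref{key}) is sound.
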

Some useful criteria to determine whether a potential $V$ is in the limit point or limit circle case (it is also said to be quantum-mechanically complete or incomplete, respectively) at $0$ and $\infty$ are the following:
\begin{proposition}\label{completeinfty}
Let $V\in C^1({\bf R}_+)$ be real and bounded above by a constant $E$ on $[1,\infty)$. Suppose that $\int_1^\infty \frac{1}{\sqrt{E-V(x)}}dx=\infty$ and $V'/|V|^{3/2}$ is bounded near $\infty$. Then $V$ is in the limit point case at~$\infty$. 
\end{proposition}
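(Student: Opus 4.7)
The plan is a proof by contradiction via a Liouville--Green (WKB) reduction. Since the limit point/limit circle dichotomy at $\infty$ is independent of the spectral parameter, it is enough to analyse $-u''+Vu=\lambda u$ for any convenient real $\lambda$. I would pick $\lambda$ slightly larger than $E$ so that $W:=\lambda-V$ is strictly positive on $[1,\infty)$; the equation becomes $u''+Wu=0$. Under the hypotheses, either $V$ remains bounded or $|V|\to\infty$, and in both cases $W'/W^{3/2}$ is bounded and $\int_1^\infty W^{-1/2}\,dx=+\infty$.

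Next, I would perform the change of variable $u(x)=W(x)^{-1/4}w(t(x))$ with $t(x)=\int_1^x\sqrt{W(s)}\,ds$. A direct computation in which the $\dot w$ terms cancel transforms the equation into
\[
\ddot w+(1+R(t))\,w=0,\qquad R=\tfrac{5}{16}\!\left(\tfrac{W'}{W^{3/2}}\right)^{\!2}-\tfrac{1}{4}\tfrac{W''}{W^{2}}.
\]
The first summand is bounded by hypothesis; the second, which involves $V''$, I would handle by a single integration by parts against $w$, converting $W''/W^2$ into terms in $W'/W^{3/2}$ plus boundary contributions (the standard trick in the proof of Levinson's criterion). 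A Gronwall estimate applied to the energy $|w|^2+|\dot w|^2$ then shows that every solution $w(t)$ is bounded above and below by positive constants for large $t$; consequently, for any two linearly independent solutions $w_1,w_2$ the sum $|w_1|^2+|w_2|^2$ is bounded below by a positive constant at infinity.

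Translating back via $|u_j|^2=W^{-1/2}|w_j|^2$ and $dx=dt/\sqrt{W}$, one obtains
\[
\int_1^\infty\bigl(|u_1|^2+|u_2|^2\bigr)\,dx\;\geq\; c\int_1^\infty\!\frac{dx}{\sqrt{E-V(x)}}\;=\;+\infty
\]
by hypothesis. Hence at least one of $u_1,u_2$ fails to be in $L^2$ near $\infty$, and by Weyl's criterion $V$ is in the limit point case at $\infty$. The main obstacle is to make the Liouville--Green remainder $R$ rigorously small despite the hypothesis constraining only $V'$; I would resolve this via the integration-by-parts reduction sketched above (or equivalently by majorising $V$ by a smoother comparison potential sharing its asymptotic behaviour), a standard device in the proof of the Levinson/Hartman--Wintner criterion (see \cite[Theorem X.8]{rs2}).
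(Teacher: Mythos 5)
The paper does not actually prove this proposition: it is quoted verbatim from Reed--Simon as the Corollary to Theorem X.8 of \cite{rs2}, so the relevant comparison is with that standard proof, which is a Wronskian argument, not a WKB one. Your reduction to $u''+Wu=0$ with $W=\lambda-V>0$, and the verification that $W'/W^{3/2}$ stays bounded and $\int W^{-1/2}dx=\infty$, are fine. The genuine gap is the Gronwall step. From $\ddot w+(1+R)w=0$ with $R$ merely \emph{bounded}, the energy $\mathcal{E}=|w|^2+|\dot w|^2$ only satisfies $|\dot{\mathcal{E}}|\leq |R|\,\mathcal{E}$, whence $\mathcal{E}(t)\geq \mathcal{E}(t_0)\exp\bigl(-\int_{t_0}^{t}|R|\,ds\bigr)$; this is a genuine lower bound only if $R\in L^1(dt)$, which the hypotheses do not provide. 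For instance $V(x)=1+\tfrac12\sin x$ satisfies every assumption of the proposition (with $E=3/2$), yet its Liouville--Green remainder is a bounded, periodic, non-integrable function of $t$. Worse, the conclusion you draw from that step --- every solution bounded above \emph{and below} at infinity --- is simply false for such periodic potentials when $\lambda$ lies in a spectral gap: one Floquet solution decays exponentially. (Limit point still holds there because the companion solution grows, but your argument does not produce that dichotomy.) A secondary but real issue is that $V$ is only assumed $C^1$, so $W''$, and hence $R$ itself, need not exist; you flag this, but the proposed repairs (integration by parts against $w$, or a smoother majorant) are exactly where the work lies and are not carried out.

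The standard argument sidesteps all of this. Assume for contradiction that every solution of $-u''+Vu=0$ is $L^2$ near $\infty$. For a real solution $u$, write $\int_c^x (u')^2/(E-V)\,ds$ in terms of $uu'/(E-V)$, $\int u u''/(E-V)$ and $\int (E-V)'uu'/(E-V)^2$; the hypothesis that $V'/|V|^{3/2}$ is bounded (hence $(E-V)'/(E-V)^{3/2}$ is controlled) together with Cauchy--Schwarz and $u\in L^2$ forces $\int_1^\infty (u')^2/(E-V)\,dx<\infty$. Then for two independent solutions $u_1,u_2$ the Wronskian $u_1u_2'-u_2u_1'$ is a nonzero constant, so
\begin{equation*}
\infty=\int_1^\infty \frac{|u_1u_2'-u_2u_1'|}{\sqrt{E-V}}\,dx\leq \|u_1\|_{L^2}\Bigl(\int_1^\infty\frac{(u_2')^2}{E-V}\,dx\Bigr)^{1/2}+\|u_2\|_{L^2}\Bigl(\int_1^\infty\frac{(u_1')^2}{E-V}\,dx\Bigr)^{1/2}<\infty,
\end{equation*}
a contradiction. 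Note this uses only $V\in C^1$ and never requires individual solutions to be bounded below. I would recommend either adopting this route or strengthening your hypotheses to ones that make the Liouville--Green remainder integrable.
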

\begin{proposition}\label{complete0}
Let $V\in C^0({\bf R}_+)$ be real and positive near $0$. If $V(x)\geq \frac{3}{4x^2}$ near $0$ then $V$ is in the limit point case at $0$. If for some $\epsilon>0$, $V(x)\leq (\frac{3}{4}-\epsilon)\frac{1}{x^2}$ near $0$, then $V$ is in the limit circle case at $0$.
\end{proposition}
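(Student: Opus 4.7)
Plan: Both implications reduce to elementary ODE comparison arguments after a Liouville-type substitution that makes the threshold $3/4$ natural. Setting $y:=-\ln x$ (so $y\to+\infty$ as $x\to 0^+$) and $u(x):=x^{1/2} w(y)$, a direct computation shows that \eqref{lp-lc} becomes $-w''(y)+W(y)\,w(y)=0$ with $W(y):=\tfrac14+e^{-2y}V(e^{-y})$, and the change of variable $x=e^{-y}$ gives $\int_0^\delta |u(x)|^2\,dx=\int_{-\ln\delta}^{+\infty}e^{-2y}|w(y)|^2\,dy$. Since $V$ is positive near $0$, one has $W\ge 1/4>0$ near $+\infty$; the hypotheses $V\ge\tfrac{3}{4x^2}$ and $V\le(\tfrac34-\epsilon)\tfrac{1}{x^2}$ translate respectively to $W(y)\ge 1$ and $W(y)\le 1-\epsilon$ for all $y\ge y_0$. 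The threshold $3/4$ is natural here: it is exactly the value that sends $W$ to the constant $1$, which corresponds to the critical growth rate $e^y$ that is borderline for the $e^{-2y}$ weight.

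For the limit point case ($W\ge 1$ on $[y_0,\infty)$), I would take the solution $w$ of $-w''+Ww=0$ with $w(y_0)=w'(y_0)=1$ and compare it with the solution $v(y):=e^{y-y_0}$ of $v''=v$ having the same initial data. A standard continuation argument (as long as $w>0$, $w''=Ww>0$ forces $w'>0$ and hence $w>0$) gives $w,w'>0$ on $[y_0,\infty)$. The Wronskian $\mathcal{W}:=wv'-w'v$ then satisfies $\mathcal{W}(y_0)=0$ and $\mathcal{W}'=(1-W)wv\le 0$, hence $\mathcal{W}\le 0$; since $(v/w)'=\mathcal{W}/w^2$, the ratio $v/w$ is non-increasing and starts at $1$, so $w(y)\ge v(y)=e^{y-y_0}$. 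Translating back, $\int_0^\delta |u|^2\,dx\ge\int_{y_0}^\infty e^{-2y}\,e^{2(y-y_0)}\,dy=+\infty$, so this particular solution of \eqref{lp-lc} is not in $L^2$ near $0$. By Proposition~\ref{weyl} this is exactly the limit point case at $0$.

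For the limit circle case ($W\le 1-\epsilon$ on $[y_0,\infty)$), set $\mu:=\sqrt{1-\epsilon}<1$ and consider the energy $E(y):=w(y)^2+w'(y)^2/\mu^2$ of an arbitrary solution $w$. Using $w''=Ww$ with $0\le W\le \mu^2$ together with AM-GM,
\begin{equation*}
E'(y) \;=\; 2ww'\!\left(1+\tfrac{W}{\mu^2}\right)\;\le\;4|ww'|\;\le\;2\mu\!\left(w^2+\tfrac{(w')^2}{\mu^2}\right)\;=\;2\mu\,E(y).
\end{equation*}
Gronwall then gives $E(y)\le E(y_0)e^{2\mu(y-y_0)}$ and hence $|w(y)|\le Ce^{\mu y}$ on $[y_0,\infty)$ for a constant $C$ depending on $w$. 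Translating back, $\int_0^\delta |u|^2\,dx\le C'\int_{-\ln\delta}^{\infty}e^{-2(1-\mu)y}\,dy<+\infty$ since $\mu<1$. As this bound holds for every solution of $-w''+Ww=0$, both linearly independent solutions of \eqref{lp-lc} lie in $L^2$ near $0$, i.e.\ this is the limit circle case. The main pitfall is the bookkeeping of the weight $e^{-2y}$, which makes $u\in L^2(0,\delta)$ equivalent not to $w\in L^2$ at $+\infty$ but to $e^{-y}w\in L^2$ at $+\infty$; once this is tracked carefully, both directions are one-sided ODE comparisons against $w''=w$ and $w''=(1-\epsilon)w$.
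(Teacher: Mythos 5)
Your argument is correct. Note that the paper does not actually prove this statement: it is quoted verbatim as \cite[Theorem X.10]{rs2}, so there is no in-paper proof to compare against. Your Liouville substitution $x=e^{-y}$, $u=x^{1/2}w(y)$ is computed correctly (one checks $-u''+Vu=0$ becomes $-w''+(\tfrac14+e^{-2y}V(e^{-y}))w=0$ and $\int_0^\delta|u|^2dx=\int_{-\ln\delta}^{\infty}e^{-2y}|w|^2dy$), the hypotheses do translate to $W\ge 1$ and $W\le 1-\epsilon$, the positivity/Wronskian comparison against $v=e^{y-y_0}$ gives $w\ge e^{y-y_0}$ and hence divergence of the weighted integral, and the energy/Gronwall bound $|w|\le Ce^{\mu y}$ with $\mu=\sqrt{1-\epsilon}<1$ gives convergence for every solution; since the substitution is a linear bijection between the two-dimensional solution spaces, this settles both cases. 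The classical route (the one in Reed--Simon) stays in the $x$ variable and compares directly against the Euler equations $-u''+\tfrac{3}{4x^2}u=0$ and $-u''+(\tfrac34-\epsilon)\tfrac{1}{x^2}u=0$, whose explicit solutions $x^{\alpha}$ with $\alpha=\tfrac12\pm\tfrac12\sqrt{4g_2+1}$ straddle the $L^2(0,\delta)$ threshold $\alpha=-\tfrac12$ exactly at $g_2=\tfrac34$; your change of variables repackages the same comparison as a constant-coefficient problem at $+\infty$, which makes the two ODE comparisons entirely elementary at the cost of the weight bookkeeping you correctly flag. Two cosmetic remarks: in the limit point case the conclusion follows from the \emph{definition} of limit point (one non-$L^2$ solution suffices), not from Proposition \ref{weyl}; and in the energy estimate the lower bound $W\ge 0$ (which you do have, from positivity of $V$) is genuinely needed to get $|1+W/\mu^2|\le 2$, so it is worth stating that this is where the positivity hypothesis enters.
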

\begin{proposition}\label{noncomplete0}
Let $V\in C^0({\bf R}_+)$ be real, and suppose that it decreases as $x \downarrow 0$. Then $V$ is in the limit circle case at $0$.
\end{proposition}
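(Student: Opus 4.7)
The plan is to show directly that under the monotonicity assumption every solution of the ODE $-u''(x) + V(x) u(x) = 0$ is bounded on a small interval $(0,a]$; since $(0,a]$ has finite length, boundedness immediately forces $L^2((0,a])$, and by the definition preceding Proposition~\ref{weyl} this is precisely the limit circle case at $0$.

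Because $V$ is continuous on ${\bf R}_+$ and decreases as $x\downarrow 0$, it is non-decreasing in $x$ on some interval $(0,a]$, so the one-sided limit $\lim_{x\to 0^+}V(x)$ exists in $[-\infty, V(a)]$. If this limit is finite, then $V$ is bounded on $(0,a]$ and a standard Gronwall estimate applied to the quantity $u^2+(u')^2$ gives boundedness of $u$ and $u'$ on $(0,a]$, so every solution is in $L^2((0,a])$ and we are done.

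The interesting case is $V(x) \to -\infty$ as $x \to 0^+$. Shrinking $a$ so that $V < 0$ on $(0,a]$, I would introduce the energy
\[
F(x) \;:=\; u(x)^2 \;-\; \frac{(u'(x))^2}{V(x)} \;=\; u(x)^2 + \frac{(u'(x))^2}{|V(x)|} \;\geq\; u(x)^2,
\]
and compute, using $u''=Vu$,
\[
F'(x) \;=\; 2u(x) u'(x) - \frac{2u'(x)\,V(x) u(x)}{V(x)} + \frac{V'(x)(u'(x))^2}{V(x)^2} \;=\; \frac{V'(x)(u'(x))^2}{V(x)^2} \;\geq\; 0,
\]
the last inequality coming from $V$ being non-decreasing. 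Hence $F$ is non-decreasing on $(0,a]$, so $u(x)^2 \leq F(x) \leq F(a) < \infty$ for every $x \in (0,a]$, which shows that $u$ is bounded and therefore in $L^2((0,a])$.

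The main technical point to address is that $V$ is only assumed continuous, so $V'$ exists merely almost everywhere and the identity for $F'$ is not literally defined on all of $(0,a]$. I would handle this either by reading the increment $F(x_1)-F(x_0)$ as a Riemann--Stieltjes integral $\int_{x_0}^{x_1}(u'(x))^2 V(x)^{-2}\, dV(x)$ against the non-negative measure $dV$ produced by the monotone function $V$, or by smoothing $V$ with a sequence of $C^1$ non-decreasing approximants and passing to the limit in the bound $u^2 \leq F(a)$; either route closes the proof and yields the limit circle property at $0$.
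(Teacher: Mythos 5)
The paper does not actually prove this statement: it is quoted as a known criterion and attributed to \cite[Problem X.7]{rs2}, so there is no internal proof to compare against. Your argument is correct and is essentially the standard solution to that exercise: the Lyapunov function $F=u^2-(u')^2/V$ satisfies $F'=V'(u')^2/V^2\geq 0$ once $u''=Vu$ is substituted, so $u^2\leq F\leq F(a)$ near $0$, and boundedness on a finite interval gives square-integrability of every solution, i.e.\ the limit circle case. The case split is handled properly (if $\lim_{x\to 0^+}V$ is finite the equation has bounded coefficients and a Gronwall bound on $u^2+(u')^2$ suffices; otherwise one may shrink $a$ so that $V<0$ and the energy function is well defined and dominates $u^2$). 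You also correctly identify the one genuine technical point: a continuous monotone $V$ need not be absolutely continuous, so the pointwise inequality $F'\geq 0$ a.e.\ does not by itself yield monotonicity of $F$. Either of your two fixes closes this. The Stieltjes route is cleanest if phrased as integration by parts against the non-decreasing continuous function $w=-1/V$: for $x_0<x_1$ one gets
\begin{equation*}
F(x_1)-F(x_0)=\int_{x_0}^{x_1}(u')^2\,dw\;\geq\;0,
\end{equation*}
since the term $\int w\,d[(u')^2]=\int 2uu'\,dx$ exactly cancels the increment of $u^2$; this coincides with your $\int (u')^2V^{-2}\,dV$ for continuous monotone $V$. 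The monotone-smoothing alternative also works provided you re-solve the ODE with the approximated potentials and invoke continuous dependence, as you indicate. In short: the proposal is a complete, self-contained proof of a fact the paper only cites.
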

Weyl's Theorem and these criteria are, respectively, \cite[Theorem X.7, Corollary to Theorem X.8, Theorem X.10 and Problem X.7 ]{rs2}.

Here we give the proof of Proposition \ref{alpha-cyl}, which makes use of the limit point-limit circle argument.
\begin{proof}\emph{of Proposition \ref{alpha-cyl}}\;\;
The Laplace-Beltrami operator (with domain $C^\infty_0({\bf R}_+\times S^1)$) and the curvature associated to the orthonormal frame $X_1,X_2^{(\alpha)}$ are given by 
$$\Delta_\alpha=\frac{\partial^2}{\partial x^2}+x^{2\alpha}\frac{\partial^2}{\partial y^2}-\frac{\alpha}{x}\frac{\partial}{\partial x},\quad K_\alpha=-\frac{\alpha(\alpha+1)}{x^2}.$$
We perform a unitary transformation
$$
U_\alpha:L^2\Big({\bf R}_+\times S^1, \frac{1}{|x|^\alpha}dx\,dy\Big) \rightarrow  L^2({\bf R}_+\times S^1, dx\,dy), \quad \psi \mapsto |x|^{-\alpha/2}\psi,
$$
which gives the operator
$$L_{\alpha,c}:=U_\alpha (-\Delta_\alpha+cK_\alpha)U_\alpha^{-1}=-\frac{\partial^2}{\partial x^2}-x^{2\alpha}\frac{\partial^2}{\partial y^2}+\frac{(1-4c)\alpha^2+(2-4c)\alpha}{4x^2}.$$
Via Fourier transform with respect to the variable $y\in S^1$, one obtains the direct sum operator
$$\widetilde{L}_{\alpha,c}:=\bigoplus_{k\in {\bf Z}}(L_{\alpha,c})_k,\quad (L_{\alpha,c})_k:=-\frac{\partial^2}{\partial x^2}+x^{2\alpha}k^2+\frac{(1-4c)\alpha^2+(2-4c)\alpha}{4x^2}, $$
acting on the Hilbert space $\ell^2(L^2({\bf R}_+))$, with domain $\mathcal{D}(\widetilde{L}_{\alpha,c})=\{(f_k)_{k\in{\bf Z}}\in\ell^2(L^2({\bf R}_+))\mid f_k\in \mathcal{D}((L_{\alpha,c})_k)\;\forall k\in{\bf Z}, f_k=0 \text{ for almost every $k$}\}$, where $\mathcal{D}\Big((L_{\alpha,c})_k\Big)=C^\infty_0({\bf R}_+)$ for all $k$. Moreover, as a general fact concerning direct sum operators, $\widetilde{L}_{\alpha,c}$ is essentially self-adjoint if and only if $(L_{\alpha,c})_k$ is so for all $k$ (\cite[Proposition 2.3]{Boscain-Prandi-JDE-2016}). Let us thus study the essential self-adjointness of $(L_{\alpha,c})_0$: it is a Sturm-Liouville operator of the form $-\frac{d^2}{dx^2}+V_{\alpha,c}(x)$, where 
\begin{equation}\label{inversesquare}
V_{\alpha,c}(x)=\frac{(1-4c)\alpha^2+(2-4c)\alpha}{4x^2}=:\frac{k(\alpha,c)}{x^2}.
\end{equation}
The potential $V_{\alpha,c}$ is quantum-mechanically complete at infinity, for all $(\alpha,c)\in{\bf R}\times[0,\infty)$ (as one can check by applying Proposition \ref{completeinfty}). So, applying Propositions \ref{complete0} and \ref{noncomplete0}, we can conclude that $2(L_{\alpha,c})_0$ is essentially self-adjoint if and only if $V_{\alpha,c}(x)\geq \frac{3}{4x^2}$ near zero, since $V_{\alpha,c}(x)=k(\alpha,c)/x^2$ and when $k(\alpha,c)<0$ then $V_{\alpha,c}$ decreases for $x\downarrow 0$. By using the explicit formula \eqref{inversesquare} for $V_{\alpha,c}$, we obtain $k(\alpha,c)\geq 3/4$ which yields the values of $(\alpha,c)$ given in the statement. For what we said before, when $(L_{\alpha,c})_0$ is not essentially self-adjoint, neither is so $\widetilde{L}_{\alpha,c}$. Finally, when $(L_{\alpha,c})_0$ is essentially self-adjoint, then 
$$x^{2\alpha}k^2+\frac{(1-4c)\alpha^2+(2-4c)\alpha}{4x^2}\geq \frac{3}{4x^2}, $$
so $(L_{\alpha,c})_k$ is essentially self-adjoint too, for all $k$, and hence $\widetilde{L}_{\alpha,c}$ is so.\hfill$\Box$\end{proof}

\begin{remark}
As a by-product of the proof of Proposition \ref{alpha-cyl} we obtain that for the Grushin cylinder ($\alpha=1$) with $c>0$ all Fourier components of $-\Delta_1+cK_1$ are not essentially self-adjoint, due to the inequality $x^2k^2+(\frac{3}{4}-2c)\frac{1}{x^2}<\frac{3}{4x^2}$ which holds near zero for all $k\in {\bf Z}$. Hence $-\Delta_1+cK_1$ has infinite deficiency indices, for any $c>0$. When $c\in(0,1/2)$, Theorem \ref{theorem} extends this result to any two-dimensional almost Riemannian manifold of step $2$ (under some natural topological assumptions).

 However, from this proof we can also point out that this is not always the case for the $\alpha$-Grushin cylinder, as for instance in the case of a flat cone ($\alpha=-1$) with $c\geq0$ (note that this is not an ARS). In that case, the inequality $(k^2-\frac{1}{4})\frac{1}{x^2}\geq \frac{3}{4x^2}$ implies that the $k$-th's Fourier components of $-\Delta_{-1}+cK_{-1}$ are essentially self-adjoint for all $k\neq 0$, even if $(-\Delta_{-1}+cK_{-1})_0$ is not. This means that $-\Delta_{-1}+cK_{-1}$ has deficiency indices equal to $1$, for all $c\geq 0$.
 \end{remark}

We conclude this Section by considering a Riemannian manifold $(M,g)$ without boundary, with associated Riemannian volume form $\omega$, and Laplace-Beltrami operator $\Delta={\rm div}_\omega\circ {\rm grad}_g$ acting on the Hilbert space $L^2(M,\omega)$, with domain $\mathcal{D}(\Delta)=C^\infty_0(M)$. Green's identity implies
\begin{enumerate}
\item[(i)] $\int_M \overline{u}\;\Delta v \; d\omega=\int_M \overline{\Delta u}\; v \; d\omega$, for all $u,v\in C^\infty_0(M)$, i.e., $\Delta$ is a symmetric operator.
\item[(ii)] $\mathcal{D}(\Delta^*)=\{u\in L^2(M,\omega) \mid \Delta u\in L^2(M,\omega) \text{ in the sense of distributions} \}. $
\end{enumerate}
Letting $F$ be a real-valued continuous function locally $L^2(M,\omega)$ seen as a multiplicative operator with domain $C^\infty_0(M)$, (i) and (ii) still hold true for the operator $\Delta+F$ instead of $\Delta$. 
\begin{remark} Being $\Delta+F$ a real operator (that is, it commutes with complex conjugation), its deficiency indices are equal (\cite[Theorem X.3]{rs2}) and thus it always admits self-adjoint extensions.
\end{remark}

 \section{Grushin zone}
 \label{sec:grushinzone}

We focus now our attention around a Grushin point. We thus define the Riemannian manifold $\Omega=\{(x,y)\in {\bf R}\times S^1 \mid x\neq 0\}$ with metric $g=\mathrm{diag}(1,x^{-2}e^{-2\phi(x,y)})$, where $\phi$ is a smooth function which is constant for large $|x|$ . The smoothness of $\phi$ is guaranteed by Proposition \ref{normalform}, and even if it is only defined locally, we extend it constantly in the coordinates $(x,y)$, since what matters is the analysis close to $x=0$. Note that $X_1,X_2$ given by Proposition \ref{normalform} (F2) is an orthonormal frame for $g$. We then consider also the two connected components $\Omega_+=\{(x,y)\in {\bf R}\times S^1 \mid x> 0\}$ and $\Omega_-=\{(x,y)\in {\bf R}\times S^1 \mid x< 0\}$. We start by proving the following key result:
\begin{theorem}\label{Grushin}
Consider the Riemannian manifold $(\Omega_+,g)$, with associated Riemannian volume form $\omega$, curvature $K$ and Laplace-Beltrami operator $\Delta$. Let $c\in (0,1/2)$. Consider the curvature Laplacian $-\Delta+cK$, with domain $C_0^\infty(\Omega_+)$, acting on $L^2(\Omega_+, \omega)$. Then for every $\epsilon >0$ there exist functions $h_{\pm,\epsilon,c}\in L^2(\Omega_+, \omega)\cap C^\infty(\Omega_+)$ such that
\begin{enumerate}
\item [(i)] $h_{\pm,\epsilon,c} \in \mathcal{D}((-\Delta+cK)^*)$;
\item [(ii)]$h_{\pm,\epsilon,c} \notin \mathcal{D}(\overline{-\Delta+cK})$;
\item [(iii)] $\mathrm{supp}(h_{\pm,\epsilon,c})\subset (0,\epsilon)\times S^1$.
\end{enumerate}
In particular, $-\Delta+cK$ is not essentially self-adjoint (here $c\neq 0$). The same conclusions hold if we replace $\Omega_+$ with $\Omega_-$ or $\Omega$.
\end{theorem}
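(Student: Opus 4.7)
The strategy combines a unitary conjugation to flatten the Riemannian area, a Frobenius-type construction of two profiles with leading asymptotics $x^{s_\pm}$ transverse to the singular set, and a Wronskian-type boundary pairing argument to forbid the closure.

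First, define the unitary $U\colon L^2(\Omega_+,\omega)\to L^2(\Omega_+,dx\,dy)$ by $U\psi:=(xe^\phi)^{-1/2}\psi$. A direct computation from \eqref{e-ugo-dA}--\eqref{e-ugo-D}, using $\phi(0,y)=0$, gives
\[
L_c:=U(-\Delta+cK)U^{-1}=-\partial_x^2+V(x,y)-\partial_y(\mathfrak f^2\partial_y)-\mathfrak f^2 W(x,y),
\]
with $\mathfrak f=xe^\phi$, $W\in C^\infty$ bounded, and
\[
V(x,y)=\frac{3-8c}{4x^2}+\frac{(1-4c)\partial_x\phi(x,y)}{2x}+R(x,y),
\]
$R$ smooth and bounded near $x=0$. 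The $y$-part is in divergence form on $S^1$, hence formally self-adjoint, and $\mathfrak f^2=O(x^2)$. For $c\in(0,1/2)$ the coefficient $(3-8c)/4$ is strictly less than $3/4$, placing us in the limit-circle regime at $x=0$ of Proposition~\ref{complete0}. The indicial equation $s(s-1)=(3-8c)/4$ has roots $s_\pm=\tfrac12(1\pm\sqrt{4-8c})$ satisfying $s_+>1/2$, $s_-\in(-1/2,1/2)$, $s_++s_-=1$, and both $x^{s_\pm}$ lie in $L^2$ near $0$.

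Pick $\chi\in C^\infty({\bf R}_+)$ with $\chi\equiv 1$ near $0$ and $\mathrm{supp}\,\chi\subset(0,\epsilon)$. Define
\[
\tilde h_+(x,y):=\chi(x)\,x^{s_+},\qquad \tilde h_-(x,y):=\chi(x)\Bigl(x^{s_-}+\frac{(1-4c)\partial_x\phi(0,y)}{4\,s_-}\,x^{s_-+1}+\cdots\Bigr),
\]
where the successive $y$-dependent corrections in $\tilde h_-$ are chosen by a Frobenius recursion so as to annihilate the non-$L^2$ remainders $x^{s_-+j-1}$ produced when $L_c$ acts on $x^{s_-+j}$; for $\tilde h_+$ no correction is needed because $s_+>1/2$ makes the analogous $x^{s_+-1}$ remainder automatically square-integrable, and the degenerate value $c=3/8$ (with $s_-=0$) is handled by inserting a $\beta(y)\,x\log x$ term instead. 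One then checks $\tilde h_\pm\in L^2$ and $L_c\tilde h_\pm\in L^2$: the $x^{s_\pm-2}$ singularities cancel by the indicial equation, the subleading $x^{s_\pm-1}$ obstruction is killed or is $L^2$, and all $\partial_y$-derivatives of the corrections pick up the harmless $\mathfrak f^2=O(x^2)$ prefactor. Setting $h_\pm:=U^{-1}\tilde h_\pm$, integration by parts against $C^\infty_0(\Omega_+)$ test functions (which vanish in a neighborhood of $\{x=0\}$) gives $h_\pm\in\mathcal D((-\Delta+cK)^*)$ with $(-\Delta+cK)^*h_\pm=(-\Delta+cK)h_\pm$; the support condition is built into $\chi$.

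Suppose finally $h_+\in\mathcal D(\overline{-\Delta+cK})$. The defining property of the adjoint, applied against $w:=h_-$, forces $(L_c\tilde h_+,\tilde h_-)_{L^2(dxdy)}=(\tilde h_+,L_c\tilde h_-)_{L^2(dxdy)}$. Since $V$ is real and the $y$-part of $L_c$ is formally self-adjoint on $S^1$, both contribute nothing to the difference; two integrations by parts in $x$ reduce the identity to
\[
0=\int_{S^1}\lim_{x\to 0^+}\bigl(\partial_x\tilde h_+(x,y)\,\overline{\tilde h_-(x,y)}-\tilde h_+(x,y)\,\overline{\partial_x\tilde h_-(x,y)}\bigr)\,dy.
\]
Plugging in the leading asymptotics $\tilde h_\pm\sim x^{s_\pm}$ and using $s_++s_-=1$, the inner limit equals $s_+-s_-$; the outer integral is $2\pi(s_+-s_-)=2\pi\sqrt{4-8c}\ne 0$, contradicting the identity. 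The same argument, with the roles reversed, handles $h_-$; the cases $\Omega_-$ and $\Omega$ follow by the reflection $x\mapsto -x$ and by gluing the two pieces. \emph{The main obstacle} is the Frobenius matching for $\tilde h_-$: the $y$-dependent subleading $1/x$-perturbation in $L_c$ forces $y$-dependent corrections to maintain $L_c\tilde h_-\in L^2$ while preserving the leading $x^{s_-}$ asymptotic on which the Wronskian computation depends, and the exceptional case $c=3/8$ (where the recursion denominator $2s_-$ vanishes) requires an ad-hoc logarithmic modification.
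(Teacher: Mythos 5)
Your proof is correct, and the construction of the adjoint-domain functions is essentially the paper's: the same unitary conjugation, the same indicial roots $s_\pm=\alpha_\pm=\tfrac12(1\pm\sqrt{4-8c})$, the same one-step Frobenius correction of the $x^{s_-}$ branch with coefficient $g_1(y)/(2s_-)$ where $g_1(y)=\tfrac{1-4c}{2}\partial_x\phi(0,y)$ (this is exactly Lemma~\ref{modeloperator} with $g_2=\tfrac34-2c$, including the logarithmic modification at the exceptional value $c=3/8$), and the same absorption of the bounded remainder $R$ into the adjoint-membership check. Where you genuinely diverge is in excluding $h_\pm$ from the closure. The paper characterizes the closure domain: a Hardy-inequality/Kato--Rellich reduction (Lemma~\ref{small}, which is where the hypothesis $c<1/2$ enters via non-negativity of $T_c$) combined with the external result $\mathcal{D}(\overline{s_c})=H^2_0({\bf R}_+)$ (Proposition~\ref{ananieva}) yields that every Fourier mode of a closure-domain function supported near $x=0$ is $o(x^{3/2})$ (Lemma~\ref{closure}), and the zeroth Fourier mode of $\tilde h_{\pm}$ visibly violates this. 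You instead run a Green's-identity/Wronskian boundary-pairing argument: if $h_+$ lay in the closure, the abstract identity $(\overline{B}u,v)=(u,B^*v)$ for $u\in\mathcal{D}(\overline{B})$, $v\in\mathcal{D}(B^*)$, applied with $v=h_-$, would force the boundary form at $x=0$ to vanish, whereas it equals $2\pi(s_+-s_-)=2\pi\sqrt{4-8c}\neq0$; this is sound, since the integration by parts is performed only on the two explicit smooth functions and the $y$-part, being in divergence form on $S^1$, contributes no boundary term. Your route is more self-contained --- it needs neither the Hardy inequality, nor Kato--Rellich, nor the quoted description of $\mathcal{D}(\overline{s_c})$. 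What the paper's heavier machinery buys is the uniform $o(x^{3/2})$ criterion across all Fourier modes, which is reused verbatim to show that the entire family $\{h_{\pm,\epsilon,c}e^{\mathrm{i}ky}\}_{k\in{\bf Z}}$ avoids the closure and hence that the deficiency indices are infinite (Theorem~\ref{infinitedeficency}); with your method that stronger conclusion would require verifying non-degeneracy of the boundary form on the span of this infinite family, which is routine but not free.
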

What we can actually prove is the following stronger version of Theorem \ref{Grushin}:
\begin{theorem}\label{infinitedeficency}
With the same assumptions and notations of Theorem \ref{Grushin}, $\dim \mathcal{D}((-\Delta+cK)^*)/\mathcal{D}(\overline{-\Delta+cK}) = \infty$, i.e., $-\Delta+cK$ has infinite deficiency indices.
\end{theorem}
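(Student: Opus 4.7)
\medskip\noindent\textbf{Plan.} The plan is to produce infinitely many linearly independent deficiency elements by exploiting the Fourier decomposition in the $S^1$-coordinate $y$: in the flat Grushin model the operator decouples into a direct sum of 1D radial operators, each of which is in the limit-circle regime at $x=0$, and I will extend the resulting deficiency elements to the general $\phi$-case by a perturbative construction.

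\medskip\noindent\textbf{Step 1 (normalization).} First I would apply the unitary multiplication $U\psi=\mathfrak{f}^{-1/2}\psi$, which intertwines $L^2(\Omega_+,\omega)$ with $L^2(\Omega_+,dx\,dy)$ and conjugates $-\Delta+cK$ into an operator of Schrödinger form. Using \eqref{e-ugo-k}, \eqref{e-ugo-D} together with $\mathfrak{f}(x,y)=xe^{\phi(x,y)}$ and $\phi(0,y)=0$, a routine calculation shows
\begin{equation*}
L:=U(-\Delta+cK)U^{-1}=-\partial_x^2-\mathfrak{f}^2\partial_y^2+\frac{3/4-2c}{x^2}+R(x,y,\partial_y),
\end{equation*}
where $R$ is a smooth first/zeroth-order operator in $y$ whose coefficients are bounded near $x=0$ (the singular $1/x$- and $1/x^2$-terms from $\Delta$ and from $cK$ combining into the displayed coefficient $3/4-2c$).

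\medskip\noindent\textbf{Step 2 (model problem, infinite deficiency in the flat case).} When $\phi\equiv 0$ the operator $L$ decouples, under Fourier transform in $y$, into the direct sum $\bigoplus_{k\in{\bf Z}}L_k^{(0)}$ with $L_k^{(0)}=-\partial_x^2+x^2k^2+(3/4-2c)/x^2$ on $L^2({\bf R}_+,dx)$. Since $c\in(0,1/2)$ gives $3/4-2c<3/4$, Proposition \ref{complete0} puts each $V_k(x)=x^2k^2+(3/4-2c)/x^2$ in the limit-circle case at $x=0$, while Proposition \ref{completeinfty} puts it in the limit-point case at $\infty$. Weyl's Theorem (Proposition \ref{weyl}) then gives deficiency indices $(1,1)$ for every $k\in{\bf Z}$, hence infinite deficiency indices for the direct sum. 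Concretely, for each $k$ I would fix an $L^2$-solution $v_k$ of $(-\partial_x^2+V_k)v_k=iv_k$ which is limit-circle at $0$ and decays at $\infty$, and multiply by a smooth cutoff to get a candidate localized near the singular set.

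\medskip\noindent\textbf{Step 3 (perturbation to $\phi\neq 0$ and linear independence).} For general $\phi$ the operator no longer splits, but $R$ is controlled by $\phi$ and $\phi(0,y)=0$, so the coupling is small near $x=0$. For each $k$ I would seek a deficiency element of the form $h_k(x,y)=\chi(x)v_k(x)e^{iky}+r_k(x,y)$, where $\chi$ is a cutoff with support in $(0,\epsilon)$ for $\epsilon$ small, and the correction $r_k$ solves a fixed-point equation encoding the coupling produced by $R$ and by $(\mathfrak{f}^2-x^2)\partial_y^2$. Smallness of $\phi$ on $(0,\epsilon)\times S^1$, combined with a Kato-Rellich-type estimate (Proposition \ref{lemma:Kato}), yields $r_k$ in a Sobolev space \emph{more regular at $x=0$ than $v_ke^{iky}$}, so $h_k\in\mathcal{D}(L^*)$ while $h_k\notin\mathcal{D}(\overline{L})$. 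Linear independence of $\{h_k\}_{k\in{\bf Z}}$ modulo $\mathcal{D}(\overline{L})$ then follows by projecting onto Fourier modes in $y$: the angular factors $e^{iky}$ are orthogonal in $L^2(S^1)$, and the radial traces of $v_k$ at $x=0$ do not lie in the (more regular) quotient coming from $\mathcal{D}(\overline{L})$.

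\medskip\noindent\textbf{Main obstacle.} The serious step is Step 3: showing that the perturbation $R$, which couples Fourier modes, nevertheless allows one to lift each model deficiency element $v_ke^{iky}$ to a genuine element of $\mathcal{D}(L^*)\setminus\mathcal{D}(\overline{L})$, with the lift preserving enough of the $k$-th Fourier signature to guarantee linear independence in the quotient. Controlling the fixed-point iteration in a norm that simultaneously tracks the limit-circle behavior at $x=0$ and global $L^2$-integrability is the technical core; once this is established, the Weyl-type analysis of Step 2 supplies infinitely many modes and hence the infinite deficiency claim, with the corresponding statement for $\Omega_-$ and $\Omega$ following by the same argument.
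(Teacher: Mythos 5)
Your Step 1 already contains an inaccuracy that matters later: after the unitary conjugation, the remainder beyond $-\partial_x^2-\mathfrak{f}^2\partial_y^2+(3/4-2c)x^{-2}$ is \emph{not} an operator with coefficients bounded near $x=0$. It contains the term $\frac{1-4c}{2}\,\frac{\partial_x\phi(x,y)}{x}$, which behaves like $\partial_x\phi(0,y)/x$ and is genuinely singular for $c\neq 1/4$; this Coulomb-type term is precisely why the paper's Lemma \ref{modeloperator} works with the model $-\frac{d^2}{dx^2}+\frac{g_2}{x^2}+\frac{g_1}{x}$ and why the adjoint-domain representatives acquire the corrections $a_\pm x^{\alpha_\pm+1}$ (and a logarithm at $c=3/8$). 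Any perturbative scheme built on the premise that the remainder is bounded will miss this. Your Step 2 is correct (it reproduces the computation in the remark following Proposition \ref{alpha-cyl}), but it only treats the flat cylinder.

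The more serious problem is that Step 3, which you yourself identify as the core, is a programme rather than a proof. You propose to lift each flat-model deficiency element $v_ke^{\ii ky}$ to an element of $\mathcal{D}(L^*)\setminus\mathcal{D}(\overline L)$ by a fixed-point equation for a correction $r_k$ that is ``more regular at $x=0$'', but you specify neither the function space, nor why the mode-coupling operator (which involves $x^2(e^{2\phi}-1)\partial_y^2$ together with the unbounded $1/x$ term above) is a contraction there, nor why the correction cannot alter the class of $h_k$ in the quotient; Kato--Rellich controls closure domains, not the fine $x\to0$ asymptotics you would need. The paper avoids the entire difficulty by noting that one does not need (approximate) solutions of $L^*h=\pm\ii h$ at all: since $\dim\mathcal{D}(A^*)/\mathcal{D}(\overline A)$ equals the sum of the deficiency indices, it suffices to exhibit infinitely many elements of $\mathcal{D}((-\Delta+cK)^*)$ that are linearly independent modulo the closure. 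These come for free from Theorem \ref{Grushin}: take the single function $h_{+,\epsilon,c}$ constructed there and multiply by $e^{\ii ky}$, $k\in{\bf Z}$. Multiplication by $e^{\ii ky}$ preserves membership in the adjoint domain (the extra terms carry factors $x^2e^{2\phi}$ and are harmless on the support), while the $o(x^{3/2})$ criterion of Lemma \ref{closure}, applied Fourier mode by Fourier mode, shows that no nontrivial finite linear combination lies in $\mathcal{D}(\overline{-\Delta+cK})$, because the leading term $x^{\alpha_{+,c}}$ with $\alpha_{+,c}<3/2$ reappears in the $k$-th mode with coefficient $c_k$. Replacing your Step 3 by this observation closes the gap.
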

The proofs of Theorems \ref{Grushin} and \ref{infinitedeficency} span Sections \ref{subsec:proof1} and  \ref{subsec:proof2}, where we shall describe respectively the closure and the adjoint of $-\Delta+cK$.

\subsection{Closure operator}
 \label{subsec:proof1}
We shall work on the manifold $\Omega_+$, being the case $\Omega_-$ analogous. Then the statement for $\Omega$ follows from the decomposition $L^2(\Omega, \omega)=L^2(\Omega_-, \omega)\oplus^\perp L^2(\Omega_+, \omega)$.\\

For a metric $g$ of the form $\mathrm{diag}(1,\ff(x,y)^{-2})$, plugging $\ff(x,y)=xe^{\phi(x,y)}$ into \eqref{e-ugo-dA}, \eqref{e-ugo-k}, and \eqref{e-ugo-D} one has the following:
 \[
 \begin{split}
 &\omega_{(x,y)}=\dfrac{1}{xe^{\phi(x,y)}}dxdy, \\ & 
 K(x,y)=-\dfrac{2}{x^2}-\dfrac{2\partial_ x\phi(x,y)}{x}+\partial_x^2\phi(x,y)-(\partial_x\phi(x,y))^2, \\ &
 \Delta=\partial_x^2+x^2e^{2\phi(x,y)}\partial_y^2-\dfrac{1}{x}\partial_x-\partial_x\phi(x,y)\partial_x+\partial_y\phi(x,y)x^2e^{2\phi(x,y)}\partial_y.
 \end{split}
 \]
We perform a unitary transformation
\begin{equation}\label{U}
U:L^2(\Omega_+,\omega) \rightarrow  L^2(\Omega_+,dxdy), \quad \psi \mapsto (xe^{\phi})^{-1/2}\psi,
\end{equation}
and the corresponding transformed Laplacian is given by 
\[
\begin{split}
L&=U\Delta U^{-1}\\ & =\partial_x^2+x^2e^{2\phi}\partial_y^2+2x^2e^{2\phi}(\partial_y\phi)\partial_y-\dfrac{3}{4x^2}-\dfrac{\partial_x\phi}{2x}-\dfrac{1}{4}(\partial_x\phi)^2+\dfrac{1}{2}\partial_x^2\phi+\dfrac{3}{4}x^2(\partial_y\phi)^2e^{2\phi}+\dfrac{1}{2}x^2(\partial^2_y\phi) e^{2\phi}.
\end{split}
\]
We shall analyze the self-adjointness of the operator
\begin{align}
-L+cK& =U(-\Delta+cK) U^{-1}\nonumber \\ & =- \partial_x^2-x^2e^{2\phi}\partial_y^2-2x^2e^{2\phi}(\partial_y\phi)\partial_y+\Big(\dfrac{3}{4}-2c\Big)\dfrac{1}{x^2}+\dfrac{1-4c}{2}\dfrac{\partial_x\phi}{x}+\Big(\dfrac{1}{4}-c\Big)(\partial_x\phi)^2\nonumber \\ &+\Big(c-\dfrac{1}{2}\Big)\partial_x^2\phi-\dfrac{3}{4}x^2(\partial_y\phi)^2e^{2\phi}-\dfrac{1}{2}x^2(\partial^2_y\phi) e^{2\phi}\nonumber \\ &
=H_c+\eta_c, \label{unitarylaplacian}
\end{align}
where we have defined the operator
$$
H_c=-\partial_x^2-x^2e^{2\phi}\partial_y^2-2x^2e^{2\phi}(\partial_y\phi)\partial_y+\Big(\dfrac{3}{4}-2c\Big)\dfrac{1}{x^2}+\Big(\dfrac{1-4c}{2}\Big)\dfrac{\partial_x\phi}{x},
$$
and the multiplicative operator 
$$
\eta_c=\Big(\dfrac{1}{4}-c\Big)(\partial_x\phi)^2+\Big(c-\dfrac{1}{2}\Big)\partial_x^2\phi-\dfrac{3}{4}x^2(\partial_y\phi)^2e^{2\phi}-\dfrac{1}{2}x^2(\partial^2_y\phi) e^{2\phi},
$$
both with domain $C_0^\infty(\Omega_+)$, acting on the Hilbert space $L^2(\Omega_+,dxdy)=:L^2(\Omega_+)$. For later convenience, we also define the operator 
$$T_c=-\partial_x^2-x^2e^{2\phi}\partial_y^2-2x^2e^{2\phi}(\partial_y\phi)\partial_y+\Big(\dfrac{3}{4}-2c\Big)\dfrac{1}{x^2},\quad \mathcal{D}(T_c)=C^\infty_0(\Omega_+),\quad \text{acting on } L^2(\Omega_+),
  $$
and the 1D operator, usually called inverse square potential or Bessel operator in the literature
\begin{equation}\label{1D}
s_c=-\dfrac{d^2}{dx^2}+\Big(\frac34-2c\Big)\dfrac{1}{x^2},\quad \mathcal{D}(s_c)=C^\infty_0({\bf R}_+),\quad \text{acting on } L^2({\bf R}_+). 
\end{equation}
The closure of $s_c$ is known:
\begin{proposition}\label{ananieva}(\cite[Theorem 5.1 (ii)]{Ananieva})
Let $c\in(0,1/2]$. Then, $\mathcal{D}(\overline{s_c})=H^2_0({\bf R}_+)$.
\end{proposition}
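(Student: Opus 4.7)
The plan is to show $\mathcal{D}(\overline{s_c}) = H^2_0(\mathbb{R}_+)$ by establishing that on the common core $C^\infty_0(\mathbb{R}_+)$ the graph norm $\|u\|_{L^2}+\|s_c u\|_{L^2}$ is equivalent to the Sobolev norm $\|u\|_{H^2}$. Since $H^2_0(\mathbb{R}_+)$ is by definition the closure of $C^\infty_0(\mathbb{R}_+)$ in the $H^2$-norm and $\mathcal{D}(\overline{s_c})$ is the closure in the graph norm, equivalence identifies the two completions. Write $\beta := 3/4 - 2c$, so for $c\in(0,1/2]$ one has $\beta \in [-1/4, 3/4)$. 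The fundamental tool is the Hardy inequality $\|v/x\|_{L^2}^2\leq 4\|v'\|_{L^2}^2$ for $v\in C^\infty_0(\mathbb{R}_+)$, applied in the sharp form to $u$ and its derivative: combining $\|u/x^2\|_{L^2}^2\leq (4/9)\|u'/x\|_{L^2}^2$ (the critical Hardy--Rellich constant) with $\|u'/x\|_{L^2}^2\leq 4\|u''\|_{L^2}^2$ yields $\|u/x^2\|_{L^2}\leq (4/3)\|u''\|_{L^2}$.

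The upper bound is immediate: since $s_c u = -u'' + \beta u/x^2$, the triangle inequality and the above give $\|s_c u\|_{L^2}\leq (1+4|\beta|/3)\|u''\|_{L^2}$. For the lower bound, integration by parts on $C^\infty_0(\mathbb{R}_+)$ (all boundary terms vanish) yields the key identity
\begin{equation*}
\|s_c u\|_{L^2}^2 \;=\; \|u''\|_{L^2}^2 \,+\, 2\beta\,\|u'/x\|_{L^2}^2 \,+\, \beta(\beta-6)\,\|u/x^2\|_{L^2}^2,
\end{equation*}
obtained by expanding the square and using $\int u''u/x^2\,dx = -\|u'/x\|^2 + 3\|u/x^2\|^2$ (itself two integrations by parts). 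One then proceeds by cases on the sign of $\beta$.

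For $\beta\in[-1/4,0]$ the last coefficient $\beta(\beta-6)$ is nonnegative, and applying $\|u'/x\|^2\geq (9/4)\|u/x^2\|^2$ to the middle (negative) term rearranges the identity into $\|s_c u\|^2 \geq \|u''\|^2 + \beta(\beta-3/2)\|u/x^2\|^2 \geq \|u''\|^2$, since $\beta(\beta-3/2)\geq 0$ for $\beta\leq 0$. For $\beta\in(0,3/4)$ the middle term is useful, but the negative $\beta(\beta-6)\|u/x^2\|^2$ must be absorbed; using $\|u/x^2\|^2\leq (4/9)\|u'/x\|^2$ and then $\|u'/x\|^2\leq 4\|u''\|^2$ yields $\|s_c u\|^2 \geq (1 + (8/9)\beta(2\beta-3))\|u''\|^2$. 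This last coefficient is strictly positive precisely when $\beta<3/4$, i.e., $c>0$, which is exactly our hypothesis. The intermediate norm $\|u'\|_{L^2}$ is controlled by interpolation $\|u'\|_{L^2}^2 \leq \|u\|_{L^2}\|u''\|_{L^2}$, completing the two-sided comparison with the $H^2$-norm.

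The main obstacle is the degeneration of the constant $(1+(8/9)\beta(2\beta-3))$ as $\beta\to 3/4$ (equivalently $c\to 0$): the estimate becomes vacuous exactly at the Hardy--Rellich critical threshold. This is not a defect of the method but a genuine phenomenon, consistent with Proposition~\ref{complete0}: at $c=0$ the operator $s_0$ is essentially self-adjoint with a self-adjoint domain strictly larger than $H^2_0(\mathbb{R}_+)$ (it contains the $x^{3/2}$-type behavior at the origin, which fails to lie in $H^2$). Hence the range $c\in(0,1/2]$ is sharp for the identification $\mathcal{D}(\overline{s_c})=H^2_0(\mathbb{R}_+)$, and the delicate use of the sharp Hardy--Rellich constant $9/4$ is essential in both cases of the argument.
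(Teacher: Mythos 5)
The paper offers no proof of this proposition: it is imported verbatim from Ananieva--Mikhailets, so your attempt at a self-contained argument via equivalence of the graph norm and the $H^2$-norm on $C^\infty_0({\bf R}_+)$ is a legitimately different (and in principle more informative) route, and your algebra up to the key identity $\|s_c u\|^2=\|u''\|^2+2\beta\|u'/x\|^2+\beta(\beta-6)\|u/x^2\|^2$ is correct, as is the case $\beta\in(0,3/4)$.

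There is, however, a genuine error in the case $\beta\in[-1/4,0]$, i.e.\ $c\in[3/8,1/2]$. There you substitute the weighted Hardy bound $\|u'/x\|^2\geq\tfrac94\|u/x^2\|^2$ into the middle term $2\beta\|u'/x\|^2$, whose coefficient $2\beta$ is \emph{negative}: multiplying that inequality by $2\beta$ reverses it, so what you actually obtain is the upper bound $\|s_cu\|^2\leq\|u''\|^2+\beta(\beta-\tfrac32)\|u/x^2\|^2$, not the lower bound you assert. A lower bound on a negatively weighted $\|u'/x\|^2$ requires an \emph{upper} bound on $\|u'/x\|^2$, and the only one available from your toolkit, $\|u'/x\|^2\leq 4\|u''\|^2$, gives $\|s_cu\|^2\geq(1+8\beta)\|u''\|^2$, which is vacuous at $\beta=-1/4$ (i.e.\ $c=1/2$). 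So as written the lower bound fails on part of the stated range. The fix is simpler than your case analysis and dispenses with it entirely: by the reverse triangle inequality and your own Hardy--Rellich estimate $\|u/x^2\|\leq\tfrac43\|u''\|$,
\begin{equation*}
\|s_cu\|_{L^2}\;\geq\;\|u''\|_{L^2}-|\beta|\,\|u/x^2\|_{L^2}\;\geq\;\Bigl(1-\tfrac{4|\beta|}{3}\Bigr)\|u''\|_{L^2},
\end{equation*}
with strictly positive constant for all $|\beta|<3/4$, which covers the whole range $c\in(0,1/2]$ (where $\beta\in[-1/4,3/4)$) in one line; together with your upper bound and the interpolation $\|u'\|^2\leq\|u\|\,\|u''\|$ this completes the norm equivalence. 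With that replacement your argument is sound; your closing observation that the equivalence must degenerate as $c\to0$ is correct and consistent with the limit-point behaviour at $c=0$.
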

\begin{remark}\label{rmk:rangeofc}
The assumption $c\in(0,1/2]$ in Proposition \ref{ananieva} guarantees that $s_c$ is a non-negative operator. Anyway, in the recent paper \cite{besseloperator} it is proved that $\mathcal{D}(\overline{s_c})=H^2_0({\bf R}_+)$ for all $c> 0$.
\end{remark}
\begin{lemma}\label{small}
Let $c\in[0,1/2)$ and $f\in L^2(\Omega_+)$ with ${\rm supp}f\subset (0,\epsilon)\times S^1$, for some $\epsilon>0$. Then, $f\in\mathcal{D}\left(\overline{- L+cK}\right)$ if and only if $f\in\mathcal{D}(\overline{T_c})$.
\end{lemma}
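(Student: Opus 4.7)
The plan is to write $-L + cK = T_c + R$ where
\[
R := \left(\tfrac{1-4c}{2}\right)\frac{\partial_x\phi}{x} + \eta_c,
\]
and to verify that $R$ is \emph{infinitesimally} small with respect to $T_c$ on $C_0^\infty(\Omega_+)$. Kato--Rellich (Proposition~\ref{lemma:Kato}) will then give the full equality $\mathcal{D}(\overline{T_c}) = \mathcal{D}(\overline{-L+cK})$, which is stronger than the lemma (the support restriction is not needed for the argument, but is how the statement will be applied later).

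First I would dispatch the easy piece. Since $\phi$ is smooth and constant for large $|x|$, all its derivatives are bounded, and the factors $x^2 e^{2\phi}(\partial_y\phi)^2$, $x^2 e^{2\phi}\partial_y^2\phi$ appearing in $\eta_c$ are bounded because $\partial_y\phi$ and $\partial_y^2\phi$ vanish for large $|x|$. Thus $\eta_c \in L^\infty(\Omega_+)$ and $\|\eta_c f\| \leq M \|f\|$, so it is infinitesimally $T_c$-small. For the potentially singular term $\frac{\partial_x\phi}{x}$, the smoothness (and boundedness) of $\partial_x\phi$ gives $\left|\frac{\partial_x\phi}{x}\right| \leq \frac{C}{|x|}$, so it is enough to show that multiplication by $1/x$ is infinitesimally $T_c$-small.

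The key observation is that $T_c$ is in divergence form in the $y$-direction: the combination $-x^2 e^{2\phi}\partial_y^2 - 2x^2 e^{2\phi}(\partial_y\phi)\partial_y$ is exactly $-\partial_y\!\left(x^2 e^{2\phi}\partial_y\,\cdot\right)$, which is the unitary image of the $y$-Laplacian. Consequently, for $f\in C_0^\infty(\Omega_+)$, integration by parts yields
\[
\langle T_c f, f\rangle = \int |\partial_x f|^2\,dx\,dy + \int x^2 e^{2\phi}|\partial_y f|^2\,dx\,dy + \left(\tfrac{3}{4}-2c\right)\int \frac{|f|^2}{x^2}\,dx\,dy,
\]
with no cross terms. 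Applying the 1D Hardy inequality $\int_0^\infty |\partial_x f|^2\,dx \geq \tfrac{1}{4}\int_0^\infty \tfrac{|f|^2}{x^2}\,dx$ slice-by-slice in $y$ and adding it to the potential gives
\[
\langle T_c f, f\rangle \;\geq\; (1-2c)\,\|x^{-1}f\|^2.
\]
Since $c \in [0,1/2)$, the constant $1-2c$ is strictly positive, so by Cauchy--Schwarz
\[
\|x^{-1}f\|^2 \;\leq\; \tfrac{1}{1-2c}\,\|T_c f\|\,\|f\|,
\]
and the AM--GM inequality $\sqrt{ab} \leq \tfrac{1}{2}(\epsilon a + \epsilon^{-1} b)$ turns this into $\|x^{-1}f\| \leq \epsilon\|T_c f\| + C_\epsilon\|f\|$ for arbitrary $\epsilon>0$. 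This proves that $1/x$, and hence the whole perturbation $R$, is infinitesimally $T_c$-small, so Kato--Rellich closes the argument.

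The main subtlety is the interplay between the Hardy constant $1/4$ and the potential coefficient $3/4 - 2c$: their sum is precisely $1 - 2c$, which is positive exactly in the range $c < 1/2$. This is the mechanism through which the hypothesis $c \in [0,1/2)$ enters; outside this range one loses the coercivity of $T_c$ with respect to $\|x^{-1}f\|$ and the perturbative reduction to the pure inverse-square operator breaks down (as foreshadowed in Remark~\ref{rmk:rangeofc}). The cancellation of first-order $\partial_y$ terms in the quadratic form is the second ingredient one must get right; it would be spoiled by using a different unitary normalization than the one in~\eqref{U}.
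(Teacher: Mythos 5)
Your proposal is correct and follows essentially the same route as the paper: decompose $-L+cK$ into $T_c$ plus a perturbation consisting of the bounded term $\eta_c$ and the singular term $\frac{1-4c}{2}\frac{\partial_x\phi}{x}$, show the latter is infinitesimally $T_c$-small by combining the Hardy inequality with the quadratic form of $T_c$ (your coercivity constant $1-2c$ is exactly the paper's $\frac{4-8c}{4}$), and conclude via Kato--Rellich. The only cosmetic difference is that the paper absorbs $\eta_c$ and the $1/x$ term in two separate Kato--Rellich steps (through the intermediate operator $H_c$) rather than in one.
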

\begin{proof}
Since $-L+cK=H_c+\eta_c$, and $\eta_c$ is a bounded operator on $(0,\epsilon)\times S^1$, Proposition \ref{lemma:Kato} implies that $f\in\mathcal{D}(\overline{- L+cK})$ if and only if $f\in\mathcal{D}(\overline{H_c})$, for all $c>0$. Furthermore, we want to show that the singular term $\frac{g_{1,c}}{x}:=\left(\frac{1-4c}{2}\right)\frac{\partial_x\phi}{x}$ is infinitesimally small w.r.t. $T_c$, if $c\in[0,1/2)$. In order to do this we will use two main ingredients. The first one is Hardy inequality:
$$\int_0^\infty\frac{|f(x)|^2}{x^2}dx\leq4\int_0^\infty|f'(x)|^2dx \quad \forall f\in C^\infty_0({\bf R}_+).$$
The second one is perturbation theory, in particular Kato-Rellich's Theorem (Proposition~\ref{lemma:Kato}). 

For all functions $f\in C_0^\infty({\bf R}_+\times S^1)$ we have
\begin{align*}
&  \left\|\frac{g_{1,c}f}{x} \right\|^2_{L^2({\bf R}_+\times S^1)}  =  \int_{{\bf R}_+\times S^1} \frac{|g_{1,c}(x,y)|^2}{x^2} |f(x,y)|^2 dxdy  
\\
 = & \left(1- \frac{3-8c}{4-8c} \right) \int_{{\bf R}_+\times S^1} |g_{1,c}(x,y)|^2\frac{|f(x,y)|^2}{x^2}  dxdy + \frac{3-8c}{4-8c}\int_{{\bf R}_+\times S^1}|g_{1,c}(x,y)|^2 \frac{|f(x,y)|^2}{x^2}  dxdy  
\\
\leq &\||g_{1,c}|^2\|_{L^\infty({\bf R}_+\times S^1)}\left( \frac{4}{4-8c}\int_{{\bf R}_+\times S^1}  |\partial_x f(x,y)|^2  dxdy + \frac{3-8c}{4-8c}\int_{{\bf R}_+\times S^1} \frac{|f(x,y)|^2}{x^2}  dxdy\right)
\\ 
\leq &\frac{4\||g_{1,c}|^2\|_{L^\infty({\bf R}_+\times S^1)}}{4-8c}\left( \int_{{\bf R}_+\times S^1}  |\partial_x f(x,y)|^2 +x^2e^{2\phi}|\partial_y f(x,y)|^2  + \left(\frac{3}{4} - 2c \right) \frac{|f(x,y)|^2}{x^2}  dxdy\right)
\\
=&  \frac{4\||g_{1,c}|^2\|_{L^\infty({\bf R}_+\times S^1)}}{4-8c}( T_{c}f, f )_{L^2({\bf R}_+\times S^1)} \\
\leq& \frac{2\||g_{1,c}|^2\|_{L^\infty({\bf R}_+\times S^1)}}{4-8c}\left(\delta\| T_{c}f\|^2_{L^2({\bf R}_+\times S^1)} +\frac{1}{\delta} \|f\|^2_{L^2({\bf R}_+\times S^1)}\right),
\end{align*}

where we have used Fubini's Theorem and the Hardy inequality in the first inequality, we have integrated by parts in the third equality, and we have used the Young inequality in the last inequality that holds for every $\delta>0$. This proves that $\frac{g_{1,c}}{x}$ is infinitesimally small w.r.t. $T_{c}$ if $c\in[0,1/2)$.
Proposition~\ref{lemma:Kato} then implies that 
$D(\overline{H_{c}})=D(\overline{T_{c}})$ if $c\in[0,1/2)$. \hfill$\Box$\end{proof}

\begin{remark}\label{rmk:important}
The assumption $c\in(0,1/2)$ is used in the proof of Lemma \ref{small} to guarantee the non-negativity of $T_c$.
\end{remark}

For any function $f\in L^2({\bf R}_+\times S^1)$, we denote by $f=\sum_{k\in {\bf Z}}\widehat{f}_k(x)e^{\mathrm{i}ky}$ its Fourier series. 
\begin{lemma}\label{closure}
Let $c\in(0,1/2)$, and let $f\in\mathcal{D}(\overline{- L+cK})$ be a function supported in $(0,\epsilon)\times S^1$, for some $\epsilon>0$. Then, $\widehat{f}_k(x)=o(x^\frac{3}{2})$ as $x\rightarrow 0^+$, for every $k\in {\bf Z}$.
\end{lemma}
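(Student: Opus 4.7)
The plan is to reduce the 2D problem to a 1D Sturm--Liouville one by Fourier decomposition in $y$, and then to invoke Proposition \ref{ananieva} together with a standard Cauchy--Schwarz argument to extract the $o(x^{3/2})$ decay.

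By Lemma \ref{small}, $f\in\mathcal{D}(\overline{-L+cK})$ if and only if $f\in\mathcal{D}(\overline{T_c})$, so I pick approximants $f_n\in C^\infty_0(\Omega_+)$ with $f_n\to f$ and $T_c f_n\to T_c f$ in $L^2(\Omega_+)$. After multiplying by a cutoff $\chi(x)$ equal to $1$ on $\mathrm{supp}(f)\subset(0,\epsilon)\times S^1$ and vanishing for $x\geq 2\epsilon$, I further arrange $\mathrm{supp}(f_n)\subset(0,2\epsilon)\times S^1$. The commutator $[T_c,\chi]=-2\chi'\partial_x-\chi''$ is supported in an annulus bounded away from $\{x=0\}$, where $T_c$ is uniformly elliptic; local elliptic regularity combined with the a priori $L^2$-bound on $\partial_x f_n$ arising from $\langle T_cf_n,f_n\rangle$ (which controls $\|\partial_x f_n\|^2+\|xe^{\phi}\partial_y f_n\|^2$ up to a Kato-small perturbation, via the Hardy inequality when $c\geq 3/8$) justifies $T_c(\chi f_n)\to T_cf$ in $L^2$.

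I now decompose $T_c = s_c\otimes I + R$, where $s_c = -\partial_x^2+(3/4-2c)/x^2$ is the 1D Bessel-type operator on $L^2({\bf R}_+)$ with domain $C^\infty_0({\bf R}_+)$ and $R = -x^2\partial_y(e^{2\phi}\partial_y\,\cdot\,)$. For each $k\in{\bf Z}$, write $u_n^k(x):=\widehat{(f_n)}_k(x)\in C^\infty_0((0,2\epsilon))$ for the $k$-th Fourier coefficient in $y$. Taking Fourier coefficients in the relation $T_c f_n = (s_c\otimes I)(f_n) + R f_n$ gives
\begin{equation*}
s_c u_n^k = \widehat{(T_cf_n)}_k - \widehat{(Rf_n)}_k.
\end{equation*}
The first term on the right converges to $\widehat{(T_cf)}_k$ in $L^2({\bf R}_+)$, since Fourier-coefficient extraction is $L^2$-continuous. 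For the second, two integrations by parts in $y$ (no boundary terms, since $y\in S^1$) produce the explicit formula
\begin{equation*}
\widehat{(Rh)}_k = 2ik\, x^2\,\widehat{((\partial_y\phi)e^{2\phi}h)}_k + k^2\, x^2\,\widehat{(e^{2\phi}h)}_k,
\end{equation*}
and since $(\partial_y\phi)e^{2\phi}$ and $e^{2\phi}$ are bounded multipliers on $\Omega_+$ (recall $\phi$ is smooth and constant for large $|x|$), while $x^2$ is bounded on the supports of $f_n,f$, one gets $\widehat{(Rf_n)}_k\to \widehat{(Rf)}_k$ in $L^2({\bf R}_+)$. Thus $(u_n^k, s_c u_n^k)$ is Cauchy in the graph norm of $s_c$, and by definition of the closure $\widehat{f}_k\in\mathcal{D}(\overline{s_c})$.

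By Proposition \ref{ananieva}, $\widehat{f}_k\in H^2_0({\bf R}_+)$, so in particular $\widehat{f}_k(0)=(\widehat{f}_k)'(0)=0$ and $(\widehat{f}_k)''\in L^2({\bf R}_+)$. By Cauchy--Schwarz and absolute continuity of the Lebesgue integral, $|(\widehat{f}_k)'(x)|\leq \sqrt{x}\,\|(\widehat{f}_k)''\|_{L^2([0,x])}=o(\sqrt{x})$ as $x\to 0^+$, and integrating once more yields $|\widehat{f}_k(x)|=o(x^{3/2})$, as desired. The main technical point is the cutoff step, i.e., justifying $T_c(\chi f_n)\to T_cf$ in $L^2$; everything else is a clean 2D-to-1D reduction to Proposition \ref{ananieva}.
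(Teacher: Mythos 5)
Your proof is correct and follows essentially the same route as the paper: reduce to $T_c$ via Lemma \ref{small}, Fourier-decompose in $y$ to show $\widehat{f}_k\in\mathcal{D}(\overline{s_c})$ (your integration by parts in $y$ is equivalent to the paper's convolution-sum estimate), then invoke Proposition \ref{ananieva} and the $H^2_0$ Taylor/Cauchy--Schwarz bound. The one place you go beyond the paper is in spelling out the localization step (cutoff plus interior elliptic regularity on the annulus), which the paper only mentions parenthetically as ``a localization argument''; your version of that step is sound.
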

\begin{proof}
Let $f\in C^\infty_0((0,\epsilon)\times S^1)$. Lemma~\ref{small} shows that $f\in\mathcal{D}(\overline{- L+cK})$ if and only if $f\in\mathcal{D}(\overline{T_c})$. Thus, we are left to study the behavior near $x=0$ of a function $f\in\mathcal{D}(\overline{T_c})$. For any $k\in{\bf Z}$, we have
$$\widehat{(T_c\; f)}_k=s_c\widehat{f}_k-\frac12 x^2\sum_{m+m'=k}(-m^2)\widehat{f}_m\widehat{(e^{2\phi})}_{m'}-x^2\sum_{m+m'=k}(\mathrm{i}m)\widehat{f}_m\widehat{(\partial_y \phi \,e^{2\phi})}_{m'}, $$
where $s_c$ is defined in \eqref{1D}, and we compute the norm using the triangular inequality
\begin{align*}
&\|\widehat{(T_c\; f)}_k \|_{L^2({\bf R}_+)}\geq \Big\|s_c\widehat{f}_k \Big\|_{L^2({\bf R}_+)}\\
-&\Big\|\frac12 x^2\sum_{m+m'=k}(-m^2)\widehat{f}_m\widehat{(e^{2\phi})}_{m'}\Big\|_{L^2({\bf R}_+)}-\Big\|x^2\sum_{m+m'=k}(\mathrm{i}m)\widehat{f}_m\widehat{(\partial_y \phi \,e^{2\phi})}_{m'}\Big\|_{L^2({\bf R}_+)}.
\end{align*}
We have, 
\[
\begin{split}
&\Big\|\sum_{m+m'=k}(-m^2)\widehat{f}_m\widehat{(e^{2\phi})}_{m'}\Big\|_{L^2({\bf R}_+)}\\ =&\Big\|\sum_{m+m'=k}(-m'^2)\widehat{f}_m\widehat{(e^{2\phi})}_{m'}-k^2\sum_{m+m'=k}\widehat{f}_m\widehat{(e^{2\phi})}_{m'}+2k\sum_{m+m'=k}m'\widehat{f}_m\widehat{(e^{2\phi})}_{m'}\Big\|_{L^2({\bf R}_+)} \\ \leq &
\| f\|_{L^2({\bf R}_+\times S^1)} \Big(\sup_{x\in{\bf R}_+}\|e^{2\phi} \|_{H^2(S^1)}+k^2\sup_{x\in{\bf R}_+}\|e^{2\phi} \|_{L^2(S^1)}+2|k|\sup_{x\in{\bf R}_+}\|e^{2\phi} \|_{H^1(S^1)}\;\Big) \\ \leq &C_{\phi,k}\;\| f\|_{L^2({\bf R}_+\times S^1)}
\end{split}
\]
thanks to the Cauchy-Schwartz inequality and the Plancherel formula. Similarly, 
\begin{align*}
&\Big\|\sum_{m+m'=k}(\mathrm{i}m)\widehat{f}_m\widehat{(\partial_y \phi \,e^{2\phi})}_{m'}\Big\|_{L^2({\bf R}_+)} \leq \\
&\| f\|_{L^2({\bf R}_+\times S^1)}\Big(\sup_{x\in{\bf R}_+}\|e^{2\phi}\partial_y\phi \|_{H^1(S^1)}+|k|\sup_{x\in{\bf R}_+}\|e^{2\phi} \partial_y\phi\|_{L^2(S^1)}\;\Big) \leq C'_{\phi,k}\;\| f\|_{L^2({\bf R}_+\times S^1)}
\end{align*}
Thus
\begin{equation}\label{H20}
\begin{split}
\Big\|s_c\widehat{f}_k \Big\|_{L^2({\bf R}_+)}&\leq \|\widehat{(T_c\; f)}_k \|_{L^2({\bf R}_+)}+\epsilon^2C''_{\phi,k}\| f\|_{L^2({\bf R}_+\times S^1)}\\ &
\leq \|T_c\; f \|_{L^2({\bf R}_+\times S^1)}+\epsilon^2C''_{\phi,k}\| f\|_{L^2({\bf R}_+\times S^1)}  \;\; \forall\;f\in C^\infty_0((0,\epsilon)\times S^1).
\end{split}
\end{equation}
Now, if $f\in\mathcal{D}(\overline{T}_c)$ and ${\rm supp}f\subset (0,\epsilon)\times S^1$, we can consider (thanks to a localization argument) a sequence $\{\varphi_n\}_{n\in{\bf N}}\subset C^\infty_0((0,\epsilon)\times S^1)$ that converges to $f$ in the norm of $T_c$, and hence \eqref{H20} implies that the sequence of the $k^{\rm th}$-Fourier component $\{\widehat{(\varphi_n)}_k\}_{n\in{\bf N}}\subset C^\infty_0(0,\epsilon)$ converges to $\widehat{f}_k$ in the norm of $s_c$, for all $k\in{\bf Z}$. Thus, $\widehat{f}_k\in\mathcal{D}(\overline{s_c}) \,\forall k\in{\bf Z}$. Then, the conclusion follows by applying Proposition \ref{ananieva}, since every function $f\in H^2_0({\bf R}_+)$ satisfies $f(x)=o(x^\frac{3}{2})$ for $x\rightarrow 0$.
\hfill$\Box$\end{proof}
\subsection{Adjoint operator}
\label{subsec:proof2}
We first consider the 1D Sturm-Liouville model operator given by
\begin{equation}\label{eq:modeloperator}
A=-\dfrac{d^2}{dx^2}+\dfrac{g_2}{x^2}+\dfrac{g_1}{x}, \qquad g_1,g_2\in{\bf R}. 
\end{equation}
Moreover, we introduce a $C^\infty$ cut-off function $0\leq P_\epsilon\leq1$,
\begin{equation}\label{cutoff}
P_\epsilon(x)=\begin{cases}
1 & \text{if } x\leq \epsilon/2,\\
0 & \text{if } x\geq \epsilon.
\end{cases} 
\end{equation}
\begin{lemma}\label{modeloperator} 
Let $g_1\in {\bf R}$ and $g_2\in(-1/4,3/4)$. Consider the operator $A$ acting on the Hilbert space $L^2({\bf R}_+)$ with domain $C_0^\infty({\bf R}_+)$. Then,
\begin{enumerate}
\item[(a)] for any $f\in\mathcal{D}(\overline{A})$, $f(x)=o(x^\frac{3}{2})$, as $x\rightarrow0$;
\item[(b)] $\mathcal{D}(A^*)=\mathcal{D}(\overline{A})+\mathrm{span}\{\psi_+P_\epsilon,\psi_-P_\epsilon\}$, where $P_\epsilon$ is the cut-off function defined in \eqref{cutoff}, and 
$$\psi_\pm(x)=x^{\alpha_\pm}+a_\pm\;x^{\alpha_\pm+1}, \;\; \alpha_\pm=\frac{1}{2}\pm\frac{1}{2}\sqrt{4g_2+1},\;\; a_\pm=\frac{g_1}{(\alpha_\pm+1)\alpha_\pm-g_2}, $$
if 
$g_2\neq0$,
and
$$\psi_+(x)=x, \;\; \psi_-(x)=1+ g_1 x\log(x),  $$
if $g_2=0$.
\end{enumerate}
\end{lemma}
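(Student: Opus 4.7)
The plan is to establish (a) by a Kato--Rellich perturbation argument reducing $A$ to the Bessel operator $s_c$ covered by Proposition~\ref{ananieva}, and to establish (b) by explicitly constructing the two defect functions via Frobenius and closing the description through a Weyl-type dimension count (Proposition~\ref{weyl}).

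For part (a), since $g_2\in(-1/4,3/4)$ I write $g_2=\tfrac{3}{4}-2c$ with $c\in(0,1/2)$, so that $A=s_c+g_1/x$. The key step is to show that the perturbation $g_1/x$ is infinitesimally small with respect to $s_c$. I would repeat the 1D analogue of the Hardy-plus-Young computation performed in the proof of Lemma~\ref{small}: bound $\|g_1 f/x\|_{L^2({\bf R}_+)}^2$ using Hardy's inequality, rewrite the resulting expression as a multiple of $(s_c f,f)$ (this uses that the coefficient $\tfrac{3}{4}-2c$ is the one appearing in $s_c$, and requires $4-8c>0$, i.e.\ $c<1/2$), then apply Young's inequality to obtain $\|g_1 f/x\|^2\le \delta\|s_c f\|^2+\delta^{-1}C\|f\|^2$ for every $\delta>0$. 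Kato--Rellich (Proposition~\ref{lemma:Kato}) then gives $\mathcal{D}(\overline{A})=\mathcal{D}(\overline{s_c})$, which by Proposition~\ref{ananieva} equals $H^2_0({\bf R}_+)$. Every $f\in H^2_0({\bf R}_+)$ satisfies $f(0)=f'(0)=0$, so the estimate $|f(x)|\le \int_0^x \sqrt{s}\,\|f''\|_{L^2(0,s)}\,ds\le \tfrac{2}{3}x^{3/2}\|f''\|_{L^2(0,x)}$ together with absolute continuity of the integral yields $f(x)=o(x^{3/2})$ as $x\to 0^+$.

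For part (b), I would first compute the dimension of $\mathcal{D}(A^*)/\mathcal{D}(\overline{A})$. Since $g_2<3/4$, the potential of $A$ is strictly below $(\tfrac{3}{4}-\epsilon)/x^2$ near $0$ for some $\epsilon>0$ (the subdominant $g_1/x$ term does not affect this), so Proposition~\ref{complete0} places $A$ in the limit-circle case at $0$; at infinity the potential decays to $0$ and Proposition~\ref{completeinfty} gives the limit-point case. Weyl's theorem (Proposition~\ref{weyl}) then yields deficiency indices $(1,1)$, and the von Neumann decomposition \eqref{VNdecomposition} implies $\dim\mathcal{D}(A^*)/\mathcal{D}(\overline{A})=2$. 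Next, I would construct the candidates by a Frobenius ansatz $\psi=x^\alpha+ax^{\alpha+1}$ applied to $A\psi=0$: matching the coefficient of $x^{\alpha-2}$ gives the indicial equation $\alpha(\alpha-1)=g_2$, with roots $\alpha_\pm=\tfrac12\pm\tfrac12\sqrt{4g_2+1}$; matching $x^{\alpha-1}$ gives $a_\pm=g_1/((\alpha_\pm+1)\alpha_\pm-g_2)=g_1/(2\alpha_\pm)$ (using $\alpha_\pm^2-\alpha_\pm=g_2$). A direct computation shows $A\psi_\pm=a_\pm g_1\,x^{\alpha_\pm}$, which is locally $L^2$ near $0$ because $\alpha_\pm>-\tfrac12$; multiplying by $P_\epsilon$ truncates the support to a compact set, so $\psi_\pm P_\epsilon\in L^2({\bf R}_+)$ and $A(\psi_\pm P_\epsilon)\in L^2({\bf R}_+)$ (the cutoff derivatives produce smooth terms supported on $[\epsilon/2,\epsilon]$), giving $\psi_\pm P_\epsilon\in\mathcal{D}(A^*)$. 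Finally, part (a) shows every element of $\mathcal{D}(\overline{A})$ is $o(x^{3/2})$ whereas $\psi_\pm\sim x^{\alpha_\pm}$ with $\alpha_\pm\in(-\tfrac12,\tfrac32)$ strictly (since $g_2\in(-\tfrac14,\tfrac34)$ strictly) and distinct; thus $\psi_+P_\epsilon,\psi_-P_\epsilon$ are linearly independent modulo $\mathcal{D}(\overline{A})$, and combined with the dimension count the claimed equality follows.

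The main obstacle is the resonant case $g_2=0$, where $\alpha_+-\alpha_-=1$ is a positive integer and the Frobenius recursion is singular: the second solution picks up a logarithm, giving $\psi_-=1+g_1 x\log x$ with $A\psi_-=g_1^2\log x$. Here one must verify separately that $\psi_-P_\epsilon\in L^2$ (it is bounded near $0$), that $A(\psi_-P_\epsilon)\in L^2$ (the $\log x$ singularity is locally $L^2$), and that the order of $\psi_-$ at $0$ (constant non-zero limit) differs from that of $\psi_+=x$, so independence modulo $\mathcal{D}(\overline{A})$ persists. Apart from this logarithmic bookkeeping, the argument is routine.
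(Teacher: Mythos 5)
Your proposal is correct and follows essentially the same route as the paper: Hardy plus Young to make $g_1/x$ infinitesimally Kato-small with respect to the Bessel part, Kato--Rellich and Proposition \ref{ananieva} to identify $\mathcal{D}(\overline{A})=H^2_0({\bf R}_+)$ for part (a), and a Frobenius ansatz combined with the limit-point property at infinity to bound the defect space by $2$ for part (b). The only cosmetic differences are that you verify $A\psi_\pm=a_\pm g_1 x^{\alpha_\pm}$ directly from the two-term ansatz rather than truncating the full Frobenius series, and you add a (superfluous) limit-circle-at-$0$ count; neither changes the argument.
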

\begin{remark}\label{rmk:generalc}
Lemma \ref{modeloperator} is already known in the literature and holds also when $g_2<3/4$ (see, e.g., \cite[Proposition 3.1]{coulomb}), with the following modification of (b) 
$$\psi_+(x)=x^{\frac{1}{2}}+g_1x^{\frac{3}{2}}, \;\; \psi_-(x)=x^{\frac{1}{2}}\log(x)+g_1x^{\frac{3}{2}}\log(x) + 2 x^{\frac{1}{2}},  $$
if $g_2=-1/4$. Below we provide an alternative proof that uses perturbative arguments and does not need an introduction of the Bessel functions. 
\end{remark}
\begin{proof}\emph{of Lemma \ref{modeloperator}}\\
To show (a), we claim that $\frac{g_1}{x}$ is infinitesimally-small w.r.t. $-\frac{d^2}{dx^2}+\frac{g_2}{x^2}$ if $g_2>-1/4$ (exactly as in  in Lemma \ref{small} we showed that $\frac{g_{1,c}}{x}$ is infinitesimally-small w.r.t. $T_c$ if $c<1/2$). Indeed, for $g_2>-1/4$ and all $f\in C^\infty_0({\bf R}_+)$ we have
\begin{align*}
  \left\|\frac{g_{1}f}{x} \right\|^2_{L^2({\bf R}_+)}  = &  \int_{{\bf R}_+} \frac{g_1^2}{x^2} |f(x)|^2 dx 
\\
 = & \left(1- \frac{4g_2}{1+4g_2} \right) \int_{{\bf R}_+} g_1^2\frac{|f(x)|^2}{x^2}  dx +\frac{4g_2}{1+4g_2}\int_{{\bf R}_+}g_1^2 \frac{|f(x)|^2}{x^2}  dx  
\\
\leq & \frac{4g_1^2}{1+4g_2}\int_{{\bf R}_+}  |f'(x)|^2  + \frac{g_2}{x^2}|f(x)|^2  dx
\\ 
=&\frac{4g_1^2}{1+4g_2}  \left(\left[-\frac{d^2}{dx^2}+\frac{g_2}{x^2}\right] f, f \right)_{L^2({\bf R}_+)} \\
\leq& \frac{4g_1^2}{1+4g_2}\left(\delta\left\|\left[-\frac{d^2}{dx^2}+\frac{g_2}{x^2}\right] f\right\|^2_{L^2({\bf R}_+)} +\frac{1}{\delta} \|f\|^2_{L^2({\bf R}_+)}\right),
\end{align*}
where we have used the Hardy inequality in the first inequality, we have integrated by parts in the third equality, and we have used the Young inequality in the last inequality that holds for every $\delta>0$. This proves the claim.
As a consequence, for $g_2\in(-1/4,3/4)$ we have
$$\mathcal{D}\left(\overline{-\dfrac{d^2}{dx^2}+\dfrac{g_2}{x^2}+\dfrac{g_1}{x}}\right)=\mathcal{D}\left(\overline{-\dfrac{d^2}{dx^2}+\dfrac{g_2}{x^2}}\right)=H^2_0({\bf R}_+), $$
where we have used Kato-Rellich's Theorem (Propositon \ref{lemma:Kato}) in the first equality and Proposition \ref{ananieva} in the second equality.

To prove the second statement, we look for the solutions of 
\begin{equation}\label{model}
-u''(x)+\dfrac{g_2}{x^2}u(x)+\dfrac{g_1}{x}u(x)=0.
\end{equation}
These are two linearly independent functions which can be expressed via confluent hypergeometric functions,
but since we are only interested in their behavior near $x=0$, we can just use the Frobenius method (see, for instance, \cite[Chapter 4]{ode2}) to understand their asymptotics. 

The first step is to write down the indicial polynomial, which is defined as
$$
P(\alpha)=(x^{-\alpha+2}A x^\alpha)|_{x=0} = \alpha(\alpha-1)-g_2.
$$
The construction depends whether or not the two roots of this polynomial are separated by an integer. The two roots are given by
$$
\alpha_\pm=\frac{1}{2}\pm\frac{1}{2}\sqrt{4g_2+1}.
$$
Under the stated range of $g_2$ it follows that the only two cases where the two roots are separated by an integer are given by $g_2 =0$.

Assume that $g_{2}\neq 0$. Then the Frobenius method states that there exist two independent solutions, which can be represented as converging series of the form
\begin{equation}
\label{frobenius}
u_{\pm}(x) = x^{\alpha_\pm}\sum_{i=0}^\infty a_i x^i.
\end{equation}

We plug the ansatz \eqref{frobenius} into \eqref{model} and obtain the following conditions for the dominating terms
\[
\begin{cases}
a_0[\alpha(\alpha-1)-g_2]=0,\\
a_1(\alpha+1)\alpha-a_1g_2+a_0g_1=0.
\end{cases}
\]
Setting $a_0=1$, we obtain that $\alpha_{\pm}$ are exactly the roots of the indicial polynomial, that
$$a_{1,\pm}=\frac{g_1}{(\alpha_\pm+1)\alpha_\pm-g_2}=:a_\pm, $$
and that the solutions are $$u_\pm(x)=x^{\alpha_\pm}+a_\pm\;x^{\alpha_\pm+1}+o(x^{\alpha_\pm+1}).$$

Assume now that $g_2 =0$. Then the Frobenius method tells us that $u_+(x)$ is still a solution of~\eqref{model} and the second solutions is given by
$$
u_-(x) = Cu_+(x)\log(x) + x^{\alpha_-}\sum_{i=0} a_i x^i.
$$
Plugging this series expression into~\eqref{model} allows us to recover $\psi_\pm$ as the dominating terms of $u_\pm$. Moreover notice that, as a direct consequence of the Frobenius method, $A(u_\pm-\psi_\pm)$ is bounded near $x=0$, and hence $A\psi_\pm\in L^2(0,1)$.
	
So, let $\psi_\pm$ as in the statement. Then
$$(i)\;\; \psi_\pm\in L^2(0,1),\;\;\; (ii)\;\;A\psi_\pm\in L^2(0,1),\;\;\;(iii)\;\;\psi_\pm \notin \mathcal{D}(\overline{A}),$$
where (i) and (ii) imply at once that $\psi_\pm P_\epsilon \in \mathcal{D}(A^*)$ and (iii) follows from part (a) and the asymptotics of $\psi_\pm$ near $x=0$. Since the functions $\psi_+P_\epsilon$ and $\psi_-P_\epsilon$ are linearly independent  and the quotient $\mathcal{D}(A^*)/\mathcal{D}(\overline{A})$ has dimension at most $2$ (as it follows from the fact that $A$ is in the limit point case at $\infty$, by applying Proposition \ref{completeinfty}, which in turns implies that $\ker(A^+\pm\ii)$ have at most dimension $1$, by applying Proposition \ref{weyl}), the thesis follows.\hfill$\Box$\end{proof}

Now, we can use Lemma \ref{modeloperator} to obtain informations on the adjoint of the 2D operator we are interested in, that is, $- L+cK$ defined in \eqref{unitarylaplacian}, and complete the proof of Theorem \ref{Grushin}.
\begin{proof}\emph{of Theorem \ref{Grushin}}\;
We take the coefficient of $\frac1x$ evaluated at $x=0$ (i.e., on the singularity) and treat the second variable $y$ as a parameter. Indeed, setting 
\begin{equation}
\label{eq:some_defs}
g_2 = \frac{3}{4}-2c, \qquad g_1(y)=\frac{1-4c}{2}\partial_x \phi(0,y)\in C^\infty(S^1)
\end{equation}
we obtain from Lemma \ref{modeloperator} two functions $\psi_{\pm,c}\in C^\infty(\Omega)$ of both variables $x,y$. 
Then, we get the following:
\begin{lemma}
Let $c\in(0,1/2)$, and define $\widetilde{h}_{\pm,\epsilon,c}(x,y)=\psi_{\pm,c}(x,y)P_\epsilon(x)\in L^2(\Omega_+)\cap C^\infty (\Omega_+)$, where $\psi_{\pm,c}$ have the same form as functions $\psi_{\pm}$ from Lemma~\ref{modeloperator} with $g_1$, $g_2$ given by~\eqref{eq:some_defs} and $P_\epsilon$ is defined in \eqref{cutoff}. Then,
\begin{enumerate}
\item [(i)] $\widetilde{h}_{\pm,\epsilon,c} \in \mathcal{D}((- L+cK)^*)$;
\item [(ii)]$\widetilde{h}_{\pm,\epsilon,c} \notin \mathcal{D}(\overline{- L+cK})$;
\item [(iii)] $\mathrm{supp}(\widetilde{h}_{\pm,\epsilon,c})\subset (0,\epsilon)\times S^1$.
\end{enumerate}
\end{lemma}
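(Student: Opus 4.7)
The plan is to verify (iii), (i), (ii) in that order, exploiting the fact that $\psi_{\pm,c}$ is tailored by the Frobenius construction of Lemma~\ref{modeloperator} precisely so as to cancel the $x^{-2}$ and $x^{-1}$ leading singularities of $-L+cK$ at $x=0$, leaving only $L^2$ remainders. Property (iii) is immediate from $\mathrm{supp}(P_\epsilon)\subset(0,\epsilon)$. For (i), the characterisation of $\mathcal{D}((-L+cK)^*)$ recalled at the end of Section~\ref{sec:essop} reduces the problem to showing that $\widetilde{h}_{\pm,\epsilon,c}\in L^2(\Omega_+)$ and that $(-L+cK)\widetilde{h}_{\pm,\epsilon,c}\in L^2(\Omega_+)$ in the distributional sense. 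The first statement follows from $\psi_{\pm,c}(x,y)\sim x^{\alpha_\pm}$ near $x=0$ combined with $\alpha_\pm>-1/2$, which holds since $\sqrt{4-8c}<2$ for $c\in(0,1/2)$; the case $g_2=0$ is handled separately but similarly.

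For the distributional action of $-L+cK$, I would decompose
\[
-L+cK \;=\; A(y)\;+\;R,
\]
where $A(y)=-\partial_x^2+g_2 x^{-2}+g_1(y)x^{-1}$ is the 1D model of Lemma~\ref{modeloperator} with $y$ as parameter, and $R$ collects three groups of terms with smooth bounded coefficients: the higher-order $y$-derivatives $-x^2 e^{2\phi}\partial_y^2-2x^2 e^{2\phi}(\partial_y\phi)\partial_y$, the Taylor remainder $\tfrac{1-4c}{2}\bigl(\partial_x\phi(x,y)-\partial_x\phi(0,y)\bigr)/x$ (smooth because $\phi$ is smooth), and the bounded operator $\eta_c$. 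A direct substitution along the lines of the Frobenius calculation in Lemma~\ref{modeloperator} gives $A(y)\psi_{\pm,c}(x,y)=g_1(y)a_\pm(y)x^{\alpha_\pm}$ when $g_2\neq 0$ (and a bounded or $\log$-type expression when $g_2=0$), which is in $L^2$ near $x=0$ because $\alpha_\pm>-1/2$. The remainder $R\psi_{\pm,c}$ yields contributions of order $x^{\alpha_\pm+3}$ coming from the $x^2\partial_y^2$ and $x^2\partial_y$ terms acting on the $y$-dependent factor $a_\pm(y)x^{\alpha_\pm+1}$, and of order $x^{\alpha_\pm}$ from the smooth/bounded pieces, all integrable. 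The Leibniz terms generated by the cutoff $P_\epsilon$ are supported in $[\epsilon/2,\epsilon]$, where $\psi_{\pm,c}$ is smooth, so they contribute smooth compactly supported $L^2$ data.

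For (ii), I would invoke Lemma~\ref{closure}: if $\widetilde{h}_{\pm,\epsilon,c}$ were in $\mathcal{D}(\overline{-L+cK})$, every Fourier coefficient $\widehat{(\widetilde{h}_{\pm,\epsilon,c})}_k(x)$ would be $o(x^{3/2})$ as $x\to 0^+$. However, the zeroth Fourier mode of $\widetilde{h}_{\pm,\epsilon,c}$ near $x=0$ has leading behaviour $x^{\alpha_\pm}$ when $g_2\neq 0$, $x$ for $\psi_{+,c}$ when $g_2=0$, and $1$ for $\psi_{-,c}$ when $g_2=0$; since $\alpha_\pm<3/2$ and $1\not=o(x^{3/2})$, none of these are $o(x^{3/2})$, yielding the required contradiction.

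The main obstacle is the bookkeeping in the middle paragraph: one must verify that after subtracting $A(y)$, every surviving term obtained by applying $-L+cK$ to $\psi_{\pm,c}$ vanishes at $x=0$ strictly faster than $x^{-1/2}$, a threshold that becomes tight for $\psi_{-,c}$ as $c\uparrow 1/2$, since $\alpha_-$ can be arbitrarily close to $-1/2$. This is precisely where the assumption $c\in(0,1/2)$ enters, both through $\alpha_->-1/2$ and through $g_2\in(-1/4,3/4)$, which is the range where Lemma~\ref{modeloperator} furnishes the two independent model solutions.
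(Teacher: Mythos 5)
Your proposal is correct and follows essentially the same route as the paper: (iii) from the support of $P_\epsilon$, (i) via the decomposition $-L+cK=A(y)+R$ with $A(y)$ the 1D model operator of Lemma \ref{modeloperator} and $R$ collecting the $y$-derivative terms, the Taylor remainder $\tfrac{1-4c}{2}\bigl(\partial_x\phi(x,y)-\partial_x\phi(0,y)\bigr)/x$ and $\eta_c$, and (ii) by testing the zeroth Fourier mode against the $o(x^{3/2})$ criterion of Lemma \ref{closure}. Your explicit computation $A(y)\psi_{\pm,c}=g_1(y)a_\pm(y)x^{\alpha_\pm}$ (and the bounded/logarithmic expressions at $c=3/8$) checks out and merely makes explicit what the paper delegates to "the very construction" in Lemma \ref{modeloperator}.
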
 
\begin{proof}
Part (iii) is obvious, as $\mathrm{supp}(P_\epsilon)\subset(0,\epsilon)$. To prove (i), we consider the operator $R$ on the domain $C^\infty_0(\Omega_+)$, whose action is defined by $R:=(-L+cK)-A$, (where $A$ is the operator whose action is defined in \eqref{eq:modeloperator}, but now is considered on the domain $C^\infty_0(\Omega_+)$, and $g_1,g_2$ are the functions defined in \eqref{eq:some_defs}), and we claim that $R \widetilde{h}_{\pm,\epsilon,c}\in L^2(\Omega_+)$, in the weak sense. Indeed,
we have
\[
\begin{split}
R&=-x^2e^{2\phi}\partial_y^2-2x^2e^{2\phi}(\partial_y\phi)\partial_y+\dfrac{1-4c}{2}\Big(\dfrac{\partial_x\phi(x,y)-\partial_x\phi(0,y)}{x}\Big)+\eta_c\\ &
=-x^2e^{2\phi}\partial_y^2-2x^2e^{2\phi}(\partial_y\phi)\partial_y+\widetilde{\eta}_c,
\end{split}
\]
(where $\eta_c$ and $\widetilde{\eta}_c$ are bounded functions on $(0,\epsilon)\times S^1$) and the claim follows from the $C^\infty$-regularity of $\widetilde{h}_{\pm,\epsilon,c}$ w.r.t. $y$. Then, by the very construction of $\widetilde{h}_{\pm,\epsilon,c}$, we have that $A \widetilde{h}_{\pm,\epsilon,c}\in L^2(\Omega_+)$, which in turns implies that $(-L+cK)\widetilde{h}_{\pm,\epsilon,c}=(R+A)\widetilde{h}_{\pm,\epsilon,c}\in L^2(\Omega_+)$ in the weak sense, and proves part (i).

To prove part (ii) we notice that the $0^{\rm th}$-Fourier component of $\widetilde{h}_{\pm,\epsilon,c}$ is given by 
\[
\widehat{(\widetilde{h}_{\pm,\epsilon,c})}_0=P_\epsilon(x)\cdot
\begin{cases}
x^{\alpha_{\pm,c}}+\widehat{(a_{\pm,c})}_0x^{\alpha_{\pm,c}+1}&\text{if } c\neq 3/8\\
x&\text{if } c= 3/8 \text{ and }+\\
1+\widehat{(g_1)}_0 x\log(x)&\text{if } c= 3/8 \text{ and }-
\end{cases}
\]
where $\alpha_{\pm,c}=\frac{1}{2}\pm\sqrt{1-2c}$ and $a_{\pm,c}$ is the function found in Lemma \ref{modeloperator} (b) w.r.t. the function $g_1$ and the constant $g_2$ defined in \eqref{eq:some_defs}. The conclusion follows by applying Lemma \ref{closure}, since $\widehat{(\widetilde{h}_{\pm,\epsilon,c})}_0$ is not $o(x^\frac{3}{2})$ for any $c\in(0,1/2)$.
\hfill$\Box$\end{proof}
To conclude the proof of Theorem \ref{Grushin}, it suffices to consider $h_{\pm,\epsilon,c}:=U^{-1}\widetilde{h}_{\pm,\epsilon,c}$, where $U$ is the unitary transformation defined in \eqref{U}.
\hfill$\Box$\end{proof}
The proof of Theorem \ref{infinitedeficency} is now an immediate consequence:
\begin{proof}\emph{of Theorem \ref{infinitedeficency}}\;
It follows by considering the infinite-dimensional vector space spanned by the family of functions $\{h_{\pm,\epsilon,c}e^{\mathrm{i}ky}\}_{k\in{\bf Z}}\subset \mathcal{D}((-\Delta+cK)^*)\setminus\mathcal{D}(\overline{-\Delta+cK}) $.\hfill$\Box$\end{proof}

\section{Proof of Theorem \ref{theorem}}
\label{sec:proofonmanifold}
If $c=0$, $H_0=-\Delta$ is known to be essentially self-adjoint on $L^2(M,\omega)$ (\cite[Theorem 1.1]{Boscain-Laurent-2013}). Then, let $c\in(0,1/2)$. \\

Let $Z=\coprod_{j\in J}W_j$ be the disjoint union in connected components for the singular set, and $M=\cup_{i\in I}\Omega_i$ be an open cover such that, for every $W_j$, there exist a unique $\Omega_{i_j}$ (Grushin zone) with $W_j \subset \Omega_{i_j}$ and $W_j\cap \Omega_{i} = \emptyset$ if $i\neq i_j$. Moreover, as previously remarked, we can assume that $\Omega_{i_j}$ is a tubular neighborhood of $W_j$, i.e., $\Omega_{i_j} \cong {\bf R}\times S^1$.\\

Let $W$ be a connected component of $Z$, and $\Omega$ the corresponding Grushin zone. Consider the operator $(-\Delta+cK)_\Omega$ defined as the restriction of $-\Delta+cK$ on the domain $C^\infty_0(\Omega\setminus W)$. In the local chart $\Omega$ with coordinates $(x,y)\in {\bf R}\times S^1$, $W=\{(x,y)\in {\bf R}\times S^1 \mid x=0\}$, and Theorem \ref{Grushin} gives a function, e.g. $h_{+,\epsilon,c}$, supported arbitrarily close to $W$, such that $h_{+,\epsilon,c}\in \mathcal{D}((-\Delta+cK)_\Omega^*)\setminus \mathcal{D}(\overline{(-\Delta+cK)}_\Omega)$.\\

We define the function 
\[
F_{\epsilon,c}=
\begin{cases}
h_{+,\epsilon,c} & \text{on $\Omega$}, \\
0 & \text{on $M\setminus \Omega$}.
\end{cases}
\]
So we have
\[
\begin{split}
(F_{\epsilon,c},(-\Delta+cK)u)_{L^2(M)}&=(F_{\epsilon,c},(-\Delta+cK)u)_{L^2(\Omega)}+(F_{\epsilon,c},(-\Delta+cK)u)_{L^2(M\setminus \Omega)}\\ &
=((-\Delta+cK)_\Omega^*h_{+,\epsilon,c},u)_{L^2(\Omega)},\quad \forall u\in C^\infty_0(M\setminus Z), 
\end{split}
\]
having integrated by parts ($h_{+,\epsilon,c}$ vanishes away from $\partial \Omega$, and $u$ vanishes away from $W$), which proves that
\[
(-\frac12\Delta+cK)^*F_{\epsilon,c}=
\begin{cases}
(-\Delta+cK)^*_\Omega h_{+,\epsilon,c} & \text{on $\Omega$}, \\
0 & \text{on $M\setminus \Omega$},
\end{cases}
\]
and $F_{\epsilon,c}\in \mathcal{D}((-\Delta+cK)^*)$. We are left to prove that $F_{\epsilon,c}\notin \mathcal{D}(\overline{-\Delta+cK})$, which implies the non-self-adjointness of $-\Delta+cK$ on $L^2(M)$. \\

Suppose by contradiction that $F_{\epsilon,c}\in \mathcal{D}(\overline{-\Delta+cK})$. Then, there exist a sequence $(\phi_n)_{n\in\mathbb{N}}\subset C^\infty_0(M\setminus Z)$ and a function $G_{\epsilon,c}\in L^2(M)$ such that
\begin{enumerate}
\item [(i)] $\phi_n\rightarrow F_{\epsilon,c}$ , as $n\rightarrow \infty$, in $L^2(M)$,
\item [(ii)] $(-\Delta+cK)\phi_n\rightarrow G_{\epsilon,c}$ , as $n\rightarrow \infty$, in $L^2(M)$. 
\end{enumerate}
Now, $G_{\epsilon,c}$ must satisfy
$$G_{\epsilon,c}=(\overline{-\Delta+cK})F_{\epsilon,c}=(-\Delta+cK)^*F_{\epsilon,c}=
\begin{cases}
(-\Delta+cK)^*_\Omega h_{+,\epsilon,c} & \text{on $\Omega$}, \\
0 & \text{on $M\setminus \Omega$},
\end{cases}. $$
So, $F_{\epsilon,c}$ and $G_{\epsilon,c}$ are both supported in $U\subsetneq \Omega$. We then consider the cut-off function $\xi\in C^\infty_0(\Omega)$
\begin{equation}
\xi(x)=\begin{cases}
1 & \text{if }x \in U,\\
0 & \text{if } x \notin \Omega,
\end{cases} 
\end{equation}
with $0\leq\xi\leq 1$, and define the sequence $(\widetilde{\phi}_n=\xi\phi_n)_{n\in\mathbb{N}}\subset C^\infty_0(\Omega\setminus W)$. We have the following
\begin{lemma}\label{contradiction}
$\widetilde{\phi}_n\rightarrow h_{+,\epsilon,c}$ and $(-\Delta+cK)\widetilde{\phi}_n=(-\Delta+cK)_\Omega\widetilde{\phi}_n\rightarrow G_{\epsilon,c}|_{\Omega}$ , as $n\rightarrow \infty$, in $L^2(\Omega)$. 
\end{lemma}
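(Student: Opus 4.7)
The plan is to apply the Leibniz rule to $(-\Delta + cK)(\xi \phi_n)$ and control the three resulting pieces, using standard interior elliptic regularity to handle the cross term. First I would verify that $\widetilde{\phi}_n = \xi \phi_n$ indeed lies in $C^\infty_0(\Omega \setminus W)$: this holds because $\phi_n$ is supported in $M \setminus Z$ and $\xi$ is compactly supported in $\Omega$, so $\xi \phi_n$ is smooth, compactly supported in $\Omega$, and vanishes on $W$.

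For the first convergence, boundedness of $\xi$ gives $\xi \phi_n \to \xi F_{\epsilon,c}$ in $L^2(M)$. By the defining properties of $\xi$ and $U$ we have $\xi \equiv 1$ on $\mathrm{supp}(F_{\epsilon,c}) \subset U$, so $\xi F_{\epsilon,c} = F_{\epsilon,c}$ identically, and restricting to $\Omega$ yields $\widetilde{\phi}_n \to F_{\epsilon,c}|_\Omega = h_{+,\epsilon,c}$ in $L^2(\Omega)$.

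For the second convergence, the Leibniz rule gives
\[
(-\Delta + cK)(\xi \phi_n) \;=\; \xi\,(-\Delta + cK)\phi_n \;-\; (\Delta \xi)\,\phi_n \;-\; 2\,g(\nabla \xi, \nabla \phi_n).
\]
The first summand converges in $L^2(M)$ to $\xi G_{\epsilon,c} = G_{\epsilon,c}$, whose restriction to $\Omega$ equals $G_{\epsilon,c}|_\Omega$. The second summand converges in $L^2(M)$ to $(\Delta \xi)\,F_{\epsilon,c}$, which vanishes identically, since $\mathrm{supp}(\Delta \xi) \subset \mathrm{supp}(\nabla \xi) \subset \Omega \setminus U$ while $\mathrm{supp}(F_{\epsilon,c}) \subset U$.

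The main obstacle is the cross term $g(\nabla \xi, \nabla \phi_n)$, since we only have $L^2$ convergence of $\phi_n$ and of $(-\Delta + cK)\phi_n$, not of $\nabla \phi_n$. The key observation is that $\xi$ can be arranged so that $\mathrm{supp}(\nabla \xi)$ is relatively compact in $\Omega \setminus W$, on which the metric $g$ is smooth and non-degenerate and $-\Delta + cK$ is a uniformly elliptic second-order operator with smooth bounded coefficients. Choosing a slightly larger open set $V'\Subset \Omega \setminus W$ with $\mathrm{supp}(\nabla \xi)\subset V'$, classical interior $L^2$ elliptic regularity upgrades the $L^2(V')$-convergences $\phi_n \to F_{\epsilon,c}$ and $(-\Delta + cK)\phi_n \to G_{\epsilon,c}$ into $H^2$-convergence on $\mathrm{supp}(\nabla \xi)$. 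In particular $\nabla \phi_n \to \nabla F_{\epsilon,c}$ in $L^2(\mathrm{supp}(\nabla \xi))$; but $F_{\epsilon,c}$ vanishes on a neighborhood of $\mathrm{supp}(\nabla \xi)$, so this limit is zero and the cross term converges to $0$ in $L^2(\Omega)$. Summing the three contributions yields $(-\Delta + cK)\widetilde{\phi}_n \to G_{\epsilon,c}|_\Omega$ in $L^2(\Omega)$, completing the proof.
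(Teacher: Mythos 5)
Your proof is correct and follows essentially the same route as the paper's: expand $(-\Delta+cK)(\xi\phi_n)$ by the Leibniz rule, dispose of the zeroth- and second-order terms using the $L^2$ convergences together with the support properties of $\xi$, $F_{\epsilon,c}$ and $G_{\epsilon,c}$, and reduce the cross term to showing $\nabla\phi_n\to 0$ in $L^2$ on $\mathrm{supp}(\nabla\xi)$. The paper obtains this last point from the interpolation bound $\|\,|\nabla\phi_n|\,\|_{L^2(\Omega\setminus U)}\leq \widetilde{C}\bigl(\|\Delta\phi_n\|_{L^2(\Omega\setminus U)}+\|\phi_n\|_{L^2(\Omega\setminus U)}\bigr)$ (labelled ``Sobolev embedding''), while you invoke interior elliptic regularity on a slightly enlarged set $V'\Subset\Omega\setminus W$ with $\mathrm{supp}(\nabla\xi)$ arranged away from $W$ --- the same idea, packaged marginally more carefully.
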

Thus, we conclude by applying Lemma \ref{contradiction} which says that  $h_{+,\epsilon,c}\in \mathcal{D}((\overline{-\Delta+cK)}_\Omega)$, which is impossible.\\
\begin{proof}\emph{of Lemma \ref{contradiction}}\;\;
Because of (i) and (ii), we have as $n\rightarrow \infty$
\[
\begin{split}
(i.1)\;\; \|\phi_n-h_{+,\epsilon,c}\|_{L^2(U)}&\rightarrow 0\;, \quad (i.2)\;\; \|\phi_n \|_{L^2(M\setminus U)}\rightarrow 0, \\
(ii.1)\;\;\|(-\Delta+cK)\phi_n-G_{\epsilon,c}\|_{L^2(U)}&\rightarrow 0\;, \quad (ii.2)\;\; \|(-\Delta+cK)\phi_n \|_{L^2(M\setminus U)}\rightarrow 0,
\end{split}
\]
since $\mathrm{supp}(h_{+,\epsilon,c})$ and $\mathrm{supp}(G_{\epsilon,c})$ are both contained in $U$. Then we have (as $n \to \infty$)
$$ \|\widetilde{\phi}_n-h_{+,\epsilon,c}\|_{L^2(\Omega)}= \|\phi_n-h_{+,\epsilon,c}\|_{L^2(U)}+\|\xi \phi_n \|_{L^2(\Omega\setminus U)}\leq  \|\phi_n-h_{+,\epsilon,c}\|_{L^2(U)}+\|\phi_n \|_{L^2(M\setminus U)}\rightarrow 0.$$
Moreover, using that $\Delta(\xi \phi_n)=(\Delta \xi)\phi_n+2\nabla \xi \cdot \nabla \phi_n +\xi(\Delta\phi_n)$, we have 
\[
\begin{split}
\|(-\Delta+cK)\widetilde{\phi}_n&-G_{\epsilon,c}|_{\Omega}\|_{L^2(\Omega)} \leq \\ & \|(-\Delta+cK)\phi_n-G_{\epsilon,c}\|_{L^2(U)}+C\|(-\Delta+cK)\phi_n+|\nabla \phi_n|+\phi_n \|_{L^2(\Omega \setminus U)},
\end{split}
\]
where $C$ is a constant such that $C>\|\Delta\xi \|_{L^\infty(\Omega)}, \|\;|\nabla\xi|\; \|_{L^\infty(\Omega)},  \|\xi \|_{L^\infty(\Omega)} $. Since $K$ is a bounded function on $\Omega\setminus U$, we have
$$\|K \phi_n \|_{L^2(\Omega\setminus U)}\leq \|K \|_{L^\infty(\Omega\setminus U)}\cdot \| \phi_n \|_{L^2(\Omega\setminus U)} \rightarrow 0, \quad \text{as } n \to \infty,$$
and
$$\|\Delta \phi_n \|_{L^2(\Omega\setminus U)}\leq  \|(-\Delta+cK)\phi_n\|_{L^2(\Omega\setminus U)}+\|cK\phi_n \|_{L^2(\Omega \setminus U)}\rightarrow 0, \quad \text{as } n \to \infty.$$
Finally, by Sobolev embedding, we have
$$\|\;|\nabla \phi_{n}|\; \|_{L^2(\Omega\setminus U)}\leq \widetilde{C}(\|\Delta \phi_n \|_{L^2(\Omega\setminus U)}+\|\phi_n \|_{L^2(\Omega\setminus U)})\rightarrow 0, \quad \text{as } n \to \infty.$$
\hfill$\Box$\end{proof}
To prove that the deficiency indices of $-\Delta+cK$ are infinite if $c>0$, it suffices to consider the infinite-dimensional vector space spanned by the family of functions $\{F^k_{\epsilon,c}\}_{k\in{\bf Z}}$ contained in $\mathcal{D}((-\Delta+~cK)^*)~\setminus \mathcal{D}(\overline{-\Delta+~cK})$ defined by \[
F^k_{\epsilon,c}=
\begin{cases}
e^{{\rm i}ky}h_{+,\epsilon,c} & \text{on $\Omega$}, \\
0 & \text{on $M\setminus \Omega$}.
\end{cases}
\]
\begin{remark} One can construct such family of functions close to any singular region of $M$, and each singular region has an infinite family of self-adjoint extensions; this gives room to self-adjoint extensions on the whole manifold, characterized by different boundary conditions to be imposed at each singular region.
\end{remark}
This concludes the proof of Theorem \ref{theorem}.

\textbf{Acknowledgements:}
We thank M. Gallone and A. Michelangeli for very instructive discussions, and Stephen A. Fulling for sharing with us \cite{fulling}. We are also deeply grateful to the anonymous reviewer for the crucial comments and corrections. The project leading to this publication has received funding from the European Union’s Horizon 2020 research and innovation programme under the Marie Sklodowska-Curie grant agreement no. 765267 (QuSCo). Also it was supported by the ANR projects SRGI ANR-15-CE40-0018 and Quaco ANR-17-CE40-0007-01.

\bibliographystyle{siamplain}
\bibliography{references}

\begin{thebibliography}{10}

\bibitem{ABB}
{\sc A.~Agrachev, D.~Barilari, and U.~Boscain}, {\em A comprehensive
  introduction to sub-{R}iemannian geometry}, vol.~181 of Cambridge Studies in
  Advanced Mathematics, Cambridge University Press, Cambridge, 2020.
\newblock From the Hamiltonian viewpoint, With an appendix by Igor Zelenko.

\bibitem{volume-sampling}
{\sc A.~Agrachev, U.~Boscain, R.~Neel, and L.~Rizzi}, {\em Intrinsic random
  walks in {R}iemannian and sub-{R}iemannian geometry {\it via} volume
  sampling}, ESAIM Control Optim. Calc. Var., 24 (2018), pp.~1075--1105.

\bibitem{ABS}
{\sc A.~Agrachev, U.~Boscain, and M.~Sigalotti}, {\em {A {G}auss-{B}onnet-like
  formula on two-dimensional almost-{R}iemannian manifolds}}, Discrete Contin.
  Dyn. Syst., 20 (2008), pp.~801--822.

\bibitem{euler}
{\sc A.~A. Agrachev, U.~Boscain, G.~Charlot, R.~Ghezzi, and M.~Sigalotti}, {\em
  Two-dimensional almost-{R}iemannian structures with tangency points}, Ann.
  Inst. H. Poincar\'e Anal. Non Lin\'eaire, 27 (2010), pp.~793--807.

\bibitem{Ananieva}
{\sc A.~Y. Ananieva and V.~S. Budyika}, {\em To the spectral theory of the
  {B}essel operator on finite interval and half-line}, J. Math. Sci. (N.Y.),
  211 (2015), pp.~624--645.
\newblock Translation of Ukr. Mat. Visn. {{\bf{1}}2} (2015), no. 2, 160--199.

\bibitem{driver}
{\sc L.~Andersson and B.~K. Driver}, {\em Finite-dimensional approximations to
  {W}iener measure and path integral formulas on manifolds}, J. Funct. Anal.,
  165 (1999), pp.~430--498.

\bibitem{bellaiche}
{\sc A.~Bella{\"{\i}}che}, {\em The tangent space in sub-{R}iemannian
  geometry}, in Sub-{R}iemannian geometry, vol.~144 of Progr. Math.,
  Birkh\"auser, Basel, 1996, pp.~1--78.

\bibitem{ivan}
{\sc I.~Y. Beschastny\u{\i} and Y.~L. Sachkov}, {\em Geodesics in the
  sub-{R}iemannian problem on the group {$\rm SO(3)$}}, Mat. Sb., 207 (2016),
  pp.~29--56.

\bibitem{BCa}
{\sc B.~Bonnard and J.~B. Caillau}, {\em Metrics with equatorial singularities
  on the sphere}.
\newblock Preprint 2011, HAL, vol. 00319299, pp. 1-29.

\bibitem{tannaka}
{\sc B.~Bonnard, J.-B. Caillau, R.~Sinclair, and M.~Tanaka}, {\em Conjugate and
  cut loci of a two-sphere of revolution with application to optimal control},
  Ann. Inst. H. Poincar\'e Anal. Non Lin\'eaire, 26 (2009), pp.~1081--1098.

\bibitem{q4}
{\sc U.~Boscain, T.~Chambrion, and G.~Charlot}, {\em Nonisotropic 3-level
  quantum systems: complete solutions for minimum time and minimum energy},
  Discrete Contin. Dyn. Syst. Ser. B, 5 (2005), pp.~957--990.

\bibitem{q1}
{\sc U.~Boscain, G.~Charlot, J.-P. Gauthier, S.~Gu{\'e}rin, and H.-R. Jauslin},
  {\em Optimal control in laser-induced population transfer for two- and
  three-level quantum systems}, J. Math. Phys., 43 (2002), pp.~2107--2132.

\bibitem{kresta}
{\sc U.~Boscain, G.~Charlot, and R.~Ghezzi}, {\em Normal forms and invariants
  for 2-dimensional almost-{R}iemannian structures}, Differential Geom. Appl.,
  31 (2013), pp.~41--62.

\bibitem{lip}
{\sc U.~Boscain, G.~Charlot, R.~Ghezzi, and M.~Sigalotti}, {\em Lipschitz
  classification of almost-{R}iemannian distances on compact oriented
  surfaces}, J. Geom. Anal., 23 (2013), pp.~438--455.

\bibitem{Boscain-Laurent-2013}
{\sc U.~Boscain and C.~Laurent}, {\em {The {L}aplace-{B}eltrami operator in
  almost-{R}iemannian geometry}}, Ann. Inst. Fourier (Grenoble), 63 (2013),
  pp.~1739--1770.

\bibitem{boscain-neel}
{\sc U.~Boscain, R.~W. Neel, et~al.}, {\em Extensions of brownian motion to a
  family of grushin-type singularities}, Electronic Communications in
  Probability, 25 (2020).

\bibitem{Boscain-Prandi-JDE-2016}
{\sc U.~Boscain and D.~Prandi}, {\em {Self-adjoint extensions and stochastic
  completeness of the {L}aplace-{B}eltrami operator on conic and anticonic
  surfaces}}, J. Differential Equations, 260 (2016), pp.~3234--3269.

\bibitem{seri}
{\sc U.~Boscain, D.~Prandi, and M.~Seri}, {\em Spectral analysis and the
  {A}haronov-{B}ohm effect on certain almost-{R}iemannian manifolds}, Comm.
  Partial Differential Equations, 41 (2016), pp.~32--50.

\bibitem{besseloperator}
{\sc J.~Derezi{\'n}ski and V.~Georgescu}, {\em On the domains of {B}essel
  operators}, Ann. Henri Poincar\'e,  (2021).

\bibitem{coulomb}
{\sc J.~Derezi\'{n}ski and S.~Richard}, {\em On radial {S}chr\"{o}dinger
  operators with a {C}oulomb potential}, Ann. Henri Poincar\'{e}, 19 (2018),
  pp.~2869--2917.

\bibitem{driver-20}
{\sc B.~DeWitt}, {\em Supermanifolds}, Cambridge Monographs on Mathematical
  Physics, Cambridge University Press, Cambridge, second~ed., 1992.

\bibitem{driver-22}
{\sc C.~DeWitt-Morette, K.~D. Elworthy, B.~L. Nelson, and G.~S. Sammelman},
  {\em A stochastic scheme for constructing solutions of the {S}chr\"{o}dinger
  equations}, Ann. Inst. H. Poincar\'{e} Sect. A (N.S.), 32 (1980),
  pp.~327--341.

\bibitem{Franceschi-Prandi-Rizzi-2017}
{\sc V.~Franceschi, D.~Prandi, and L.~Rizzi}, {\em {On the essential
  self-adjointness of singular sub-Laplacians}}, Potential Analysis, 53 (2020),
  pp.~89--112.

\bibitem{FL1}
{\sc B.~Franchi and E.~Lanconelli}, {\em Une m\'etrique associ\'ee \`a une
  classe d'op\'erateurs elliptiques d\'eg\'en\'er\'es}, Rend. Sem. Mat. Univ.
  Politec. Torino,  (1983), pp.~105--114 (1984).
\newblock Conference on linear partial and pseudodifferential operators
  (Torino, 1982).

\bibitem{fulling}
{\sc S.~A. Fulling}, {\em Pseudodifferential operators, covariant quantization,
  the inescapable {V}an {V}leck-{M}orette determinant, and the {$R/6$}
  controversy}, in Relativity, particle physics and cosmology ({C}ollege
  {S}tation, {TX}, 1998), World Sci. Publ., River Edge, NJ, 1999, pp.~329--342.

\bibitem{GMP-Grushin-2018}
{\sc M.~Gallone, A.~Michelangeli, and E.~Pozzoli}, {\em {On geometric quantum
  confinement in {G}rushin-type manifolds}}, Z. Angew. Math. Phys., 70 (2019),
  pp.~Art. 158, 17.

\bibitem{Gallone-Michelangeli-Pozzoli-2020}
{\sc M.~Gallone, A.~Michelangeli, and E.~Pozzoli}, {\em {Geometric confinement
  and dynamical transmission of a quantum particle in Grushin cylinder}},
  arXiv:2003.07128 (2020).

\bibitem{grushin1}
{\sc V.~V. Gru{\v{s}}in}, {\em A certain class of hypoelliptic operators}, Mat.
  Sb. (N.S.), 83 (125) (1970), pp.~456--473.

\bibitem{jean1}
{\sc F.~Jean}, {\em Uniform estimation of sub-{R}iemannian balls}, J. Dynam.
  Control Systems, 7 (2001), pp.~473--500.

\bibitem{David2}
{\sc D.~Krejcirik}, {\em Spectrum of the laplacian in narrow tubular
  neighbourhoods of hypersurfaces with combined dirichlet and neumann boundary
  conditions}, Mathematica Bohemica, 139 (2014), pp.~185--193.

\bibitem{Lampart}
{\sc J.~Lampart, S.~Teufel, and J.~Wachsmuth}, {\em Effective {H}amiltonians
  for thin {D}irichlet tubes with varying cross-section}, in Mathematical
  results in quantum physics, World Sci. Publ., Hackensack, NJ, 2011,
  pp.~183--189.

\bibitem{Montgomery-Subriemannian-2002}
{\sc R.~Montgomery}, {\em {A tour of subriemannian geometries, their geodesics
  and applications}}, vol.~91 of {Mathematical Surveys and Monographs},
  American Mathematical Society, Providence, RI, 2002.

\bibitem{nenciu}
{\sc G.~Nenciu and I.~Nenciu}, {\em On confining potentials and essential
  self-adjointness for {S}chr\"{o}dinger operators on bounded domains in
  {$\mathbb{R}^n$}}, Ann. Henri Poincar\'{e}, 10 (2009), pp.~377--394.

\bibitem{PozzoliGru-2020volume}
{\sc E.~Pozzoli}, {\em {Quantum Confinement in $\alpha$-Grushin planes}}, in
  {Mathematical Challenges of Zero-Range Physics}, A.~Michelangeli, ed.,
  {Springer INdAM Series}, Springer International Publishing, 2021,
  pp.~229--237.

\bibitem{Prandi-Rizzi-Seri-2016}
{\sc D.~Prandi, L.~Rizzi, and M.~Seri}, {\em Quantum confinement on
  non-complete {R}iemannian manifolds}, J. Spectr. Theory, 8 (2018),
  pp.~1221--1280.

\bibitem{rs2}
{\sc M.~Reed and B.~Simon}, {\em {Methods of modern mathematical physics. {II}.
  {F}ourier analysis, self-adjointness}}, Academic Press [Harcourt Brace
  Jovanovich, Publishers], New York-London, 1975.

\bibitem{schmu_unbdd_sa}
{\sc K.~Schm{\"u}dgen}, {\em {Unbounded self-adjoint operators on {H}ilbert
  space}}, vol.~265 of {Graduate Texts in Mathematics}, Springer, Dordrecht,
  2012.

\bibitem{ode2}
{\sc G.~Teschl}, {\em Ordinary Differential Equations and Dynamical Systems},
  American Mathematical Society, 2012.

\bibitem{jean2}
{\sc M.~Vendittelli, G.~Oriolo, F.~Jean, and J.-P. Laumond}, {\em
  Nonhomogeneous nilpotent approximations for nonholonomic systems with
  singularities}, IEEE Trans. Automat. Control, 49 (2004), pp.~261--266.

\bibitem{driver-97}
{\sc N.~M.~J. Woodhouse}, {\em Geometric quantization}, Oxford Mathematical
  Monographs, The Clarendon Press, Oxford University Press, New York,
  second~ed., 1992.
\newblock Oxford Science Publications.

\bibitem{zworski}
{\sc M.~Zworski}, {\em Semiclassical analysis}, vol.~138 of Graduate Studies in
  Mathematics, American Mathematical Society, Providence, RI, 2012.

\end{thebibliography}

\end{document}